\theoremstyle{plain}
\newtheorem{thm}{Theorem}[section]
\newtheorem{lem}[thm]{Lemma}
\newtheorem{prop}[thm]{Proposition}
\theoremstyle{definition}
\theoremstyle{remark}
\newtheorem{rem}{\bf Remark}[section]
\theoremstyle{remark}
\newtheorem{com*}{\bf Comment}
\def \newequation#1#2{
 \@definecounter{#1}
 \@namedef{the#1}{\hbox{#2}}
 \@namedef{#1}{$$\refstepcounter{#1}}
 \@namedef{end#1}{
    \eqno \csname the#1\endcsname $$\global\@ignoretrue
    }
}
\newcommand{\E}{\mathds E}
\newcommand{\PP}{\mathds P}
\newcommand{\R}{\mathbb R}
\newcommand{\N}{\mathbb N}
\newcommand{\ve}{\varepsilon}
\newcommand{\la}{\lambda}
\title{A general weak and strong error analysis of the  recursive quantization with an application to jump diffusions}
\author{ 
{\sc  Gilles Pag\`es} \thanks{Laboratoire de Probabilit\'es, Statistique  et Mod\'elisation (LPSM), Sorbonne Universit\'e, UMR  CNRS 8001, case 158, 4, pl. Jussieu, F-75252 Paris Cedex 5, France. E-mail: {\tt gilles.pages@upmc.fr} } \ \ \ 
{\sc  Abass Sagna} \thanks{ENSIIE \& Laboratoire de Math\'ematiques et Mod\'elisation d'Evry (LaMME), Universit\'e  d'Evry Val-d'Essonne,  UMR CNRS  8071,    23 Boulevard de France, 91037 Evry. E-mail: {\tt abass.sagna@ensiie.fr}. }\ \ \
\thanks{The first  author  benefited from the support of the Chaire ``Risques financiers'',  a joint initiative of \'Ecole Polytechnique, ENPC-ParisTech and Sorbonne Universit\'e, under the aegis of the Fondation du Risque. The second author  benefited from the support of the Chaire ``Markets in Transition'', under the aegis of Louis Bachelier Laboratory, a joint initiative of \'Ecole Polytechnique, Universit\'e d'\'Evry Val d'Essonne and  F\'ed\'eration Bancaire Fran\c{c}aise.} 
}
\date{}
\begin{document}

\maketitle

\begin{abstract}
Observing that the recent developments of the recursive (product) quantization method  induces a  family of Markov chains which  includes all standard discretization schemes of diffusions processes, we propose  to compute a general error bound induced by the recursive quantization schemes using this generic markovian structure.  Furthermore, we compute a marginal weak error for the recursive quantization. We also  extend the recursive quantization method to the Euler scheme associated to diffusion processes with jumps, which still  have this markovian structure, and  we say how to compute the recursive quantization and the associated weights and transition weights. 
\end{abstract}


\section{Introduction}

The $L^r$-optimal quantization problem for a random vector $X : (\Omega,\mathcal{A},\mathbb{P}) \longrightarrow  (\mathbb{R}^{d}, |\cdot|) $ at level $N$ consists on finding the best (w.r.t. the $L^r$-mean error) approximation of  $X$  by a Borel function   taking  at most  $N$ values. Assuming that $X \in L^r_{\mathbb R^d}(\mathbb P)$, this boils down to solve the following minimization problem 
 \begin{equation} \label{er.quant}
 e_{N,r}(X)   =  \inf{\{ \Vert X - \hat{X}^{\Gamma} \Vert_r, \  \Gamma \subset \mathbb{R}^d, \  \vert \Gamma \vert  \leq N \}}    =   \inf_{ \substack{\Gamma  \subset \mathbb{R}^d \\ \vert \Gamma \vert  \leq N}} \left(\int_{\mathbb{R}^d} {\rm dist}(x,\Gamma)^r d\mathbb P_X(x) \right)^{1/r} 
 \end{equation}
 where $|\Gamma|$ denotes the  cardinality of the set (or {\em grid}) $\Gamma$ and $\Vert X\Vert_r = \big[\E\, |X|^r\big]^{1/r}$ ($|\cdot|$ may be  a priori any norm on $\R^d$), $\PP_X$ denotes the distribution of $X$. The quantity  $\widehat X^{\Gamma}$ is  called a {\em Voronoi} or {\em nearest neighbour} quantization of $X$ on a grid $\Gamma = \{x_1, \ldots, x_N\} \subset  \mathbb R^d$, and is defined as
$\hat{X}^{\Gamma} = \sum_{i=1}^N  x_i  \mathds{1}_{\{X \in C_i(\Gamma)\}}$,  where $C_i(\Gamma)_{ i=1,\cdots,N}$  is  a Borel partition of  $\mathbb R^d$ (called a Voronoi partition   of $\mathbb{R}^d$)  satisfying for every $i \in \{1,\cdots,N\}$,  \ $ C_i(\Gamma) \subset  \big\{ x \in \mathbb{R}^d : \vert x-x_i \vert = \min_{j=1,\cdots,N}\vert x-x_j \vert \big\}$. The infimum in~\eqref{er.quant} is in fact a minimum i.e., for any  {\em level} $N\!\in \N$, there exists an optimal quantization grid $\Gamma^{(N)}$ solution to the above mimimization problem. For more insight on optimal quantization theory, we refer to~\cite{GraLus} or, for more applied topics,   to~\cite{PAGSpring2018}.

The quantization error $e_{N,r}(X)$ decreases to zero at the rate $N^{-1/d}$ as the grid size $N$ goes to infinity (this results is known as  the Zador Theorem, see e.g.~\cite{GraLus}).  There  is  also a non-asymptotic upper bound for  optimal quantizers called   Pierce Lemma and stated as follows in the quadratic case  (see~\cite{GraLus, LUPAaap}):    Let $p>2$. There exists  a universal constant  $C_{d,p, |\cdot|}$  such that  for every   random vector $X:(\Omega,{\cal A}, \mathbb P) \rightarrow (\mathbb{R}^{d}, |\cdot|)$,  
 \begin{equation}  \label{EqLemPierce} 
\inf_{\vert  \Gamma  \vert  \leq N} \Vert   X   - \widehat X^{\Gamma}  \Vert_{_2}   \le  C_{ d,p, |\cdot|} \, \sigma_{p}(X) N^{-\frac{1}{d}}
 \end{equation} 
 where
 $$  
 \sigma_{p}(X) = \inf_{\zeta \in \mathbb R^d} \Vert X-\zeta  \Vert_{_{p}} \le +\infty.
 $$
It will be briefly revisited in Section~\ref{subsec:regprodquant}.

\medskip From now on, $|\cdot|$  will always denote {\em the canonical Euclidean norm} $$|(y^1,\ldots,y^d)|= \Big(\sum_{1\le \ell \le d} (y^{\ell})^2\Big)^{1/2}.$$ Specific notations will be used to denote other norms on $\R^d$ (like $\ell^p$-norms).

\smallskip
For stochastic processes, the (fast) recursive quantization method has been introduced in~\cite{PagSagMQ} to quantize the Euler scheme associated to a Brownian  diffusion  process. To briefly recall the principle of recursive quantization let us  consider the Euler scheme  associated to the stochastic process solution of the stochastic differential equation
\begin{equation} \label{EqSignalIntro}
 X_t = X_0+\int_0^t b(s,X_s) ds + \int_0^t\sigma(s,X_s) dW_s,   
 \end{equation}
where  $W$ is a  standard  $q$-dimensional  Brownian motion independent from  $X_0 \in \mathbb R^d$, both defined on a probability space $(\Omega,{\cal A}, \mathbb P)$. The functions $b:[0,T] \times \mathbb R^d \rightarrow \mathbb R^d$ and the matrix-valued  diffusion coefficient function  $\sigma:[0,T] \times \mathbb R^d \rightarrow \mathbb R^{d \times q}$ are Borel measurable and satisfy some appropriate Lipschitz continuity  and linear growth conditions in $x$ uniformly in $t\!\in [0,T]$  to ensure  the existence of  a strong solution of  \eqref{EqSignalIntro}. We recall that for a given  regular time discretization mesh  $t_k = k  \Delta$, $k=0,\ldots, n$, $h = T/n$, the    Euler scheme  $(\bar  X_{t_k})_{k=0,\ldots,n}$, associated to  $(X_t)_{t\in [0,T]}$  is recursively  defined  by 
\begin{align} 
 \label{EqEuler} \bar X_{t_{k+1}}&= \bar X_{t_k} + h \,b(t_k,\bar X_{t_k})  + \sigma(t_k,\bar X_{t_k}) (W_{t_{k+1}} - W_{t_k}), \quad  \bar X_0 = X_0\\
\nonumber & =\bar X_{t_k} + h \,b(t_k,\bar X_{t_k})  + \sqrt{h} \sigma(t_k,\bar X_{t_k})Z_{k+1}\quad \mbox{ with }\quad Z_{k+1}= \frac{ W_{t_{k+1}} - W_{t_k}}{\sqrt{h}}\\
\label{eq:Eronde} & =: {\cal E}^h_k(\bar X_{t_k} ,Z_{k+1}) ,\; k=0,\ldots,n-1
\end{align} 
(hence  $(Z_k)_{k=1,\ldots, n}$ is an  i.i.d. sequence of  ${\cal N}(0;I_q)$-distributed random vectors, independent of  $\bar X_0$). 
 The  recursive  (marginal) quantizations $(\widehat X_{t_k}^{\Gamma_k}  )_{k=0,\ldots,n}$ (on the grids $(\Gamma_k)_{0 \le k \le n}$)  of  $(\bar X_{t_k}  )_{k=0,\ldots,n}$ are defined from the following recursion: 
 \begin{align}
\nonumber   \widetilde X_0  &=  {\rm Proj}_{\Gamma_0}(\bar X_0)\qquad \mbox{(if $X_0=x_0$ then $\Gamma_0= \{x_0\}$)} \\
\nonumber  \widehat X_{t_k}^{\Gamma_k}   &= {\rm Proj}_{\Gamma_k}(\widetilde X_{t_k})  \\
 \label{eq:rec} \quad \textrm{and}\quad  \widetilde X_{t_{k+1}} &= {\cal E}^h_k(\widehat X_{t_k}^{\Gamma_k} ,Z_{k+1}) ,\; k=0,\ldots,n-1,
  \end{align}
where ${\rm Proj}_{\Gamma}$ denotes a Borel nearest neighbor projection on  a finite grid $\Gamma\subset \R^d$. Furthermore, at each time step, the grid $\Gamma_k$ is $L^2$-optimal at a prescribed level $N_k$ i.e.
\begin{equation}\label{eq:OptiNewt}
\Vert \widehat X_{t_k}^{\Gamma_k}-\widetilde X_{t_k}\Vert_2 = \min \Big\{\big\Vert \widetilde X_k-{\rm Proj}_{\Gamma}(\widetilde X_{t_k})\big\Vert_2, \; |\Gamma|\le N_k\Big\}.
\end{equation}
where ${\cal E}_k^h$ is defined by~\eqref{eq:Eronde}. 
One of the main advantage of the method is that it may produce, in the particular one dimensional setting, the optimal marginal quantization grids of the Euler scheme $(\bar X_{t_k})_{k=0:n}$ and their associated probability weights quite instantaneously.  This follows from the  fact  that the recursion procedure in~\eqref{eq:OptiNewt} allows the use of the Newton algorithm~--~or possibly any fast deterministic optimization algorithm~--~to solve the grid optimization problem~\eqref{eq:OptiNewt} at every step of the algorithm.  Furthermore, under the above assumptions on $b$ and $\sigma$,  an error bound (valid in dimension $d$) for the quantization errors $\Vert \bar X_{t_{i}} -  \widehat X_{t_{i}}^{\Gamma_i}  \Vert_{_2}$ is  given in~\cite{PagSagMQ} where it is established that, for  every $k=0, \ldots, n$ 
 \begin{equation*} 
  \Vert  \bar X_{t_k} -  \widehat X_{t_k}^{\Gamma_k} \Vert_{_2}  \le    \sum_{\ell=0}^{k}  c_{\ell}  N_{\ell} ^{-1/d},
 \end{equation*}
where the $c_{\ell}$, $\ell=1:d$ are positive real constant depending on $b$, $\sigma$, $h$, $x_0$ and a parameter $p>2$ coming from Pierce's Lemma.

 The (regular) recursive quantization, as described previously,  cannot been efficiently implemented in  dimension $d\ge 2$ since  it relies on  the computation of $d$-dimensional optimal quantization grids requiring the use stochastic optimization algorithms instead of the Newton algorithm, making the procedure significantly more time consuming.  To overcome this issue, an (efficiently implementable) extension of the recursive quantization  to the $d$-dimensional  setting  has been proposed in~\cite{FioPagSag}.  It is based on a Markovian and componentwise product quantization  of the process $(\bar X_k)_{0 \le k \le n}$. To define precisely the method, let  $\ell \in \{ 1, \ldots, d\}$ and  let us denote by $\Gamma_k^{\ell} = \{x_k^{\ell,i_{\ell}}, \, i_{\ell}=1, \ldots,N_k^{\ell}\}$ an  $N_k^{\ell}$-quantizer  of   the $\ell$-th component  $\bar X_k^{\ell}$ of  $\bar X_k$. Denote by   $\widehat X_k^{\ell}$, the  quantization  of $\bar X_k^{\ell}$ of size $N_k^{\ell}$,  on the grid $\Gamma_k^{\ell}$.    We    define the  product quantizer $\Gamma_k =  \bigotimes_{\ell=1}^d  \Gamma_k^{\ell}$ of size  $N_k = N_k^1  {\small \times }  \ldots {\small \times }  N_{k}^d$ of  the vector $\bar X_k$  as   
  \begin{eqnarray*}
  \Gamma_k &  = &  \big\{ (x_k^{1,i_1}, \ldots,x_k^{d,i_d}),    \quad   \, x_k^{\ell,i_{\ell}} \in \Gamma_k^{\ell}\quad  \textrm{ for }\,  \ell \in \{1, \ldots,d\}    \, \textrm{  and } \, i_{\ell} \in \{1, \ldots,N_k^{\ell}\} \big\}.  
  \end{eqnarray*}
  
If we assume  that  $\bar X_0$ is already  quantized as $\widehat X_0$,  we define recursively the product quantization $(\widehat X_{t_k})_{0 \le k \le n}$ of the process $(\bar X_{t_k})_{0 \le k \le n}$  by the following procedure:
   \begin{equation}  \label{EqAlgorithmIntro}
   \left \{ \begin{array}{l}
 \widetilde X_0 = \widehat X_0, \quad  \widehat X_k ^{\ell} = {\rm Proj}_{\Gamma_k^{\ell}}(\widetilde X_k^{\ell}), \ \ell=1, \ldots, d   \\
   \widehat X_k  =  ( \widehat X_k^1, \ldots, \widehat X_k^d)  \quad  \textrm{and} \quad  \widetilde X_{k+1}^{\ell}  =  {\cal E}_k^{\ell}(\widehat X_k,Z_{k+1}), \ \ell=1, \ldots, d \\
   {\cal E}_k^{\ell} (x,z)= x^{\ell} + h b^{\ell}(t_k,x) + \sqrt{h} (\sigma^{\ell  \bullet}(t_k,x) \vert  z),  \ z = (z^1, \ldots,z^q)\in \mathbb R^q\\
     x=(x^1, \ldots,x^d), \ b=(b^1, \ldots, b^d)   \textrm{ and } (\sigma^{\ell  \bullet}(t_k,x) \vert  z) =\sum_{m=1}^q \sigma^{\ell m}(t_k,x) z^m
 \end{array}  
 \right.
 \end{equation}
 where for $a \in {\cal M}(d,q)$, $a^{\ell \bullet}  = [a_{\ell j}]_{j=1, \ldots,q}$.  
%
 
 \medskip
 From the numerical viewpoint,  higher order schemes (in particular, the Milstein scheme and the simplified weak order 2.0 Taylor scheme) are implemented in~\cite{McWalter} to improve the recursive quantization based Euler scheme. Strong error bonds directly adapted from~\cite{PagSagMQ} are established for the Milstein scheme in~\cite{RuddPhD}.
Recursive quantization is also applied to other model, like    local volatility models in~\cite{CalFioGra1} for calibration purposes,   to stochastic volatility models~\cite{CalFioGra2} (including Heston model). In the latter setting, in order to quantize the pair price-volatility process, the authors  first  quantize the volatility process which does not depend on the price process and then ``plug'' its quantization into the price process prior to   quantizing this second component of the pair. This appears as a particular case of the general {\em product  quantization method} of the Euler scheme (see Remark~\ref{RemForFioCal} in~\cite{FioPagSag}) developed to extend the recursive quantization paradigm to higher dimensional Brownian diffusions. 

One of our aim is then to give  recursive quantization error bounds  extending those  established  in~\cite{PagSagMQ} to a quite  general Markov framework, unifying on the way the previously cited works. 

\medskip
On the other hand, a first application of  recursive quantization  has been proposed   in~\cite{CalFioGra3} to  pure jump processes when  the characteristic function  of its marginal has an explicit expression or may be computed efficiently.  One of the contributions of this paper is to extend the recursive quantization to  a general jump diffusion solution to a stochastic differential equation driven by both a Brownian motion and a compound Poisson process evolving as
 \begin{equation} \label{EqJumpDif}
X_t = x + \int_0^t b(s,X_s) ds + \int_0^t \sigma(s,X_s) dW_s + \int_0^t  \gamma(X_{s-}) d\tilde \Lambda_s,
\end{equation}
 where $\tilde \Lambda  $ is a compensated compound  Poisson process defined w.r.t. a Poisson process $P$  by $\tilde \Lambda_t =  \sum_{i=1}^{\Lambda_t} U_{i} - \lambda t\, \E \, U_1$, for every $t \ge 0$, with $\Lambda$ a Poisson process with intensity $\lambda>0$. The sequence $(U_i)_{i \ge 1}$ is a i.i.d sequence (with distribution $\nu$)  of random variables, corresponding to the size of the jumps.  To be more precise, our aim is to recursively quantize the Euler scheme of this jump SDE defined from the following recursion, starting from $\bar X_0 = X_0$ by: 
\begin{eqnarray}
\bar X_{k+1} &=& \bar X_k +h\, b(t_k, \bar X_k) + \sqrt{h}\,\sigma(t_k, \bar X_{t_k})Z_{k+1}+ \gamma(\bar X_k) \big(\tilde \Lambda_{(k+1)h}-\tilde \Lambda_{kh}\big)  \label{EulerJump} \\
&
= & \bar X_{t_k} + h\,  b(t_k,\bar X_{t_k})  + \sqrt{h} \,\sigma(t_k,\bar X_{t_k})  Z_{k+1} + \gamma(\bar X_k) \Big(\sum_{\ell= \Lambda_{t_k}+1}^{h \Lambda_{t_{k+1}}}  U_{\ell}  - \lambda h\, \E\, U_1 \Big)
\end{eqnarray}
where  we consider a regular time discretization points $t_k =\frac{k T}{n}=k h$ for every $k\!\in \{ 0, \ldots,n \}$. 
 We will often consider  the classical modification where there is at most one jump, with probability $\lambda h$ (see Section~\ref{SecJump1} and Section~\ref{sec:NumerExp}). Like for the recursive quantization of Euler scheme of Brownian SDEs, this allows as to speak of {\em fast quantization} since the quantization of the whole path of the Euler process and its companions weights and transition probability weights may be computed  instantaneously  provided closed forms or fast algorithms are available for the quantization of  the distribution $\nu$  itself.

\medskip
We then observe that all the numerical schemes mentioned in this introduction share a Markov property
with similar features (among others, Lipschitz continuity propagation under natural assumptions), including  the  above extended   jump diffusions framework.  This lead us to propose and analyze a general  recursive quantization in  discrete time Markovian framework.

More precisely,  we suppose that a given scheme or more generally a discrete time Markov  process $\bar X = (\bar X_k)_{k=0:n}$  has the following generic form: $\bar X_0 \in \mathbb R^d$,
\[
\bar X_{k} = F_{k}\big(\bar X_{k-1}, Z_{k}\big), \; k=1:n,
\]
where $(Z_k)_{k=1:n}$ are i.i.d. $\R^q$-valued  random vectors defined on  a probability space $(\Omega,{\cal A}, \P)$ and $F_k:\R^d\times \R^q\to \R^q$, $k=1:n$ are   Borel functions. 
We define the recursive marginal quantization of $(\bar X_k)_{k=0:n}$  on the grids $\Gamma_k$, $k=0:n$, by  $\widehat X_0=   {\rm Proj}_{\Gamma_0}(\bar X_0)$, 
\begin{equation}  \label{EqGenHatX}
\widehat X_k = {\rm Proj}_{\Gamma_k}(\widetilde X_k), \; k=1:n, 
\end{equation}
where $(\widetilde X_k)_{k=1:n}$ is recursively defined by 
\begin{equation}  \label{EqGenTildeX}
\widetilde X_k = F_k\big(\widehat X_{k-1}, Z_k\big), \; k=1:n.
\end{equation}
At each time step, we assume that $\Gamma_k$ is an $L^2$-optimal grid for the distribution of $\widetilde X_k$. Supposing that the $F_k$'s are $[F_k]$-Lipschitz and have the following $L^p$-sub-linear growth  property for some $p \in (2,3]$, namely: for every $k\!\in \{0,\ldots, n\}$ and every $x \!\in \R^d, \quad  \E\vert F_k(x,Z) \vert^p\le \alpha_{p,k}+\beta_{p,k} |x|^p$, $\alpha_{p, k}, \beta_{p,k}\textcolor{blue}{\ge } 0$, we show that  the mean quadratic  recursive quantization error is given by
\[
\big\| \bar X_k-\widehat X_k\big\|_{_2} \le C_{d, p, \vert \cdot \vert } \, d^{\frac{p-2}{2p}} \sum_{i=0}^k   [F_{i+1:k}]_{\rm Lip}  \Big[ \sum_{\ell=0}^i  \alpha_{p,\ell}\beta_{p,\ell:i}\Big] ^{\frac 1p} \, N_i^{-\frac 1d}
\]
where $ [F_{i+1:k}]_{\rm Lip} $ and  $\beta_{p,\ell:i}$ are  constant depending on $\alpha_{p, \ell}$, $\beta_{p,\ell}$ and $[F_\ell]$ and will  be specified further on (see Proposition~\ref{PropGlobBound}) and where  $C_{p}>0$ only depends on  $p$ and $d$. Note that we will then specify in a more precise way these coefficients in all schemes under consideration in the paper. 

When using the recursive {\em product} quantization,  which consists, roughly speaking, in quantizing optimally in $L^2$ each {\em marginal} of $\widetilde X_k$ and consider as a grid  $\Gamma_k $ the product of these optimal marginals grids, the recursive quantization error becomes, under the same structure assumptions on the model  
\[
\big\| \bar X_k-\widehat X_k\big\|_{_2} \le  C_{p} \,d^{^{\frac{p-2}{2p}}}  \sum_{i=0}^k   [F_{i+1:k}]_{\rm Lip}  \Big[ \sum_{\ell=0}^i  \alpha_{p,\ell} \,\beta_{p,\ell:i} \,d^{^{(\frac{p}{2}-1)(i-\ell)} } \Big] ^{\frac{1}{p}} \, N_{i}^{-\frac 1d}
\]
 where $ C_{p}$ is  a positive real constant only depending on $p\!\in (2,3]$.  As expected,  there is, at least theoretically,  a loss of accuracy due to the presence of the factors $d^{^{(\frac{p}{2}-1)(i-\ell)}}$ in the right hand side of the above inequality,  whereas as detailed in~\cite{FioPagSag}, the numerical optimization of product grids can be performed in  very fast deterministic way whereas the computation  of the {\em regular} optimal grid $\Gamma_k$ in~\eqref{EqGenHatX} requires slower stochastic optimization procedures.

 We then give a general result (see Lemma \ref{lem:key} and~\cite{PagSagMQ}) stating  how to specify the coefficients $\alpha_{p, k}$, $\beta_{p, k}$ and $[F_k]_{\rm Lip}$  in various numerical schemes and models  under consideration in this paper (Euler, 1D-Milstein, simplified 2.0, Euler with jumps).

\medskip
We also provide a marginal  weak error associated to the recursive quantization. In fact,  under smooth conditions on the transition kernel induced  by the Markov chain $\bar X$,  we show  that for any function $f \in {\cal C}^1(\mathbb R^d,\mathbb R)$ such that $ [\nabla f]_{\rm Lip}<+\infty$ and $\forall\, \ell \ge 0, \  [\nabla P^{\ell} f]_{\rm Lip}<+\infty$,
 \begin{equation} \label{EqWeakErr}
 \big|\E\, f(\hat X_k)-\E\,f(\bar X_k)\big|\le \frac{[\nabla f]_{\rm Lip}}{2} \sum_{\ell=0}^k[\nabla P^{k-\ell} f]_{\rm Lip}\big\|\widehat X_\ell-\widetilde X_\ell\big\|^2_2.
 \end{equation}
We will provide, under appropriate assumptions, explicit bounds for $[\nabla P^{k-\ell}]_{Lip}$ (with controls depending on  the discretiation size $n$) for all the discretization schemes under consideration (Euler, Milstein, simplified 2.0, Euler with jumps), see Section~\ref{SectionWeakError}.

 \medskip

 The paper is organized as follows. In Section \ref{Section1}, we make a general  error analysis of the strong error of the recursively quantized scheme  for both  regular and  product recursive  quantization methods. We then deduce the recursive quantization error bounds associated to some usual schemes like the Euler scheme (for jump and no jump diffusions), the Milstein scheme, the simplified weak order 2.0 Taylor scheme. In Section \ref{SectionWeakError} , we address a first weak error analysis for the recursive quantization by giving  a general result followed by  specific results associated to some  schemes. Section~\ref{sec:NumerExp} is devoted to more algorithmic developments about the recursively quantized Euler scheme of a diffusion. We give in this section a numerical example which compares the quantization distributions of a no jump and a  jump diffusion process with normally  distributed jump sizes.  We also give a numerical example for the pricing of  a put in jump model to test the performances of the recursive quantization for jump diffusions.


\section{General recursive quantization error analysis for Markov dynamics} \label{Section1}

As pointed out in the introduction, the  recursive quantization  
 procedure induces a sequence $(\widehat X_k)_{0\le k \le n}$ of quantizations which has a generic form including the specific procedures in all the previously indicated papers. To setup the general framework,   let  us consider an $\R^d$-valued  Markov chain  $(\bar X_k)_{k=0:n}$, defined as an iterated mapping of the form
\begin{equation}  \label{eq:Markov}
\bar X_{k} = F_{k}\big(\bar X_{k-1}, Z_{k}\big), \; k=1:n,
\end{equation}
where $(Z_k)_{k=1:n}$ are i.i.d. $\R^q$-valued  random vectors defined on  a probability space $(\Omega,{\cal A}, \PP)$ and $F_k:\R^d\times \R^q\to \R^q$, $k=1:n$ are   Borel functions. Hence the transitions $P_k(x,dy)= \PP(\bar X_{k+1}\!\in dy \,|\, \bar X_k=x)$ of $(\bar X_k)_{k=0:n}$ read on Borel functions $f:\R^d \to \R$,
\[
P_kf(x)= \E\, f\big(F_{k+1}\big(x,Z_{k+1}\big)\big), \; \qquad x\!\in \R^d.
\] 
Such a family of Markov chains includes all standard discretization schemes of diffusions with or without jumps.

\medskip
We define a recursive marginal quantization (in fact Markovian) of $(\bar X_k)_{k=0:n}$  on grids $(\Gamma_k)_{k=0:n}$ by  $\widehat X_0=   {\rm Proj}_{\Gamma_0}(\bar X_0)$, 
\begin{equation}  \label{EqGenHatX}
\widehat X_k = {\rm Proj}_{\Gamma_k}(\widetilde X_k), \; k=1:n, 
\end{equation}
where $(\widetilde X_k)_{k=1:n}$ is recursively defined by 
\begin{equation}  \label{EqGenTildeX}
\widetilde X_k = F_k\big(\widehat X_{k-1}, Z_k\big), \; k=1:n.
\end{equation}
In the recursive quantization procedure, we quantize in fact $\widetilde X_k$ as $\widehat X_k$ at every step $k$, $k=1, \ldots, n$, of the algorithm, supposing that the initial r.v. $\bar X_0$ may be quantized as $\widehat X_0$. The question of interest is  to compute the quadratic quantization error induces by such a procedure, means, the quantity $\Vert  \bar X_k - \widehat X_k  \Vert_2$. To this end, we need to make the following main assumptions.

\medskip

\noindent {\it Main assumptions}.  We consider  the following two main assumptions. 
\begin{enumerate}
\item We suppose that the functions $F_k$  and (the   distribution of) $Z=Z_1$ is $L^2$ Lipschitz continuous:  
\[
({\rm Lip})\quad \equiv \quad \big\|F_k(x,Z)-F_k(x',Z)\big\|_{_2}\le [F_k]_{\rm Lip} |x-x'|, \; x,\, x' \!\in \R^d,\; k=1:n.
\]  

\item For   $\, p\!\in (2,3]$, we introduce  the following $L^p$-sub-linear growth assumption on the functions $F_k$
\[
({\rm SL})_p\; \equiv  \; \forall\, k\!\in \{0,\ldots, n\},\; \forall\,x \!\in \R^d, \quad  \E\vert F_k(x,Z) \vert^p\le \alpha_{p,k}+\beta_{p,k} |x|^p.
\]
\end{enumerate}

We will compute the recursive quantization error under assumptions  $({\rm Step})$ and $({\rm Lip})$.  When we consider  a diffusion process (possibly with jumps),  this former assumption depends on the used discretization scheme, more precisely, on the coefficients $\alpha_{p,k}$ and $\beta_{p,k}$ which  are deduced from the control of the transition operator of the  considered discretization scheme and on its Lipschitz coefficients $[F_k]_{\rm Lip}$.  In fact,  in the general setting, $F_k(x, Z_{k})$ may be decomposed as $F_{k}(x, \zeta) = a(x) + \sqrt{h} A (x)\zeta$, where $a: \mathbb R^d \rightarrow  \mathbb R^d$, $A: \mathbb R^d \rightarrow {\cal M}_{d,q}(\mathbb R)$ is a $d \times q$ matrix valued function and $\zeta$ is a centered random variable.    The  lemma below gives a control of the  generic form of the transition operator induced by the usual discretization schemes (including the Euler scheme, the Milstein scheme, etc) associated to a diffusion process (with or without jumps).  The proof of this key lemma follows the proof of Lemma 3.1. in~\cite{PagSagMQ} and   is postponed in the appendix.  
 
 In the statement of the following lemma and in the rest of the paper, we will denote the  $\mathbb R^d$-valued function  $a(x)$ by $a$ and the $ {\cal M}_{d,q}(\mathbb R)$-valued function $A(x)$ by $A$ to alloviate the notations.

 \begin{lem}[Key lemma]  \label{lem:key} $(a)$ Let $A$ be a $d {\small \times} q$-matrix and let $a\!\in \R^d$.  Let $p\!\in [2,3)$. For any random vector $\zeta$ such that  $\zeta\!\in L_{\R^q}^p(\Omega,{\cal A}, \mathbb P) $ and  $\mathbb E \,\zeta =0$, one has for every $h\!\in (0,+\infty)$
\[
\mathbb E  \vert a + \sqrt{h} A \zeta \vert^p   \le    \Big(1+  \frac{(p-1)(p-2)}{2}  h \Big) \vert a \vert^p   +  h \Big( 1 + p + h^{\frac{p}{2}-1} \Big)  \Vert A  \Vert^p \, \mathbb E \vert  \zeta \vert^p,
\]
where   $\Vert A \Vert^2  = {\rm Tr} (A A^{\star})$. 

\medskip
\noindent $(b)$ In particular, if $|a|\le |x|(1+Lh)+Lh$ and $\|A\|^p\le 2^{p-1}\Upsilon^{^p}h (1+|x|^p)$, then 
 \begin{equation}   \label{EqKeyLem}
   \mathbb E  \vert a + \sqrt{h} A \zeta \vert^p   \le     \Big( e^{ \kappa_p h}  + K_p h  \Big)  \vert x \vert^p  +  \big(  e^{\kappa_p  h } L + K_p \big) h, 
   \end{equation} 
where $\kappa_p := \Big(\frac{(p-1)(p-2)}{2 } + 2 p L \Big)$  and $K_p :=  2^{p-1} \Upsilon^{^p} \Big( 1 + p + h^{\frac{p}{2}-1} \Big)   \mathbb E \vert  \zeta \vert^p$.
\end{lem}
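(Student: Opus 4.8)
Part (a): We need to bound $\mathbb{E}|a + \sqrt{h} A\zeta|^p$ for $p \in [2,3)$. The plan is to use a second-order Taylor-type expansion of the function $u \mapsto |a + u|^p$ around $u = 0$. Since $p \in [2,3)$, the map $\phi(u) = |u|^p$ is $C^2$ with $\nabla\phi(u) = p|u|^{p-2}u$ and Hessian $D^2\phi(u) = p|u|^{p-2}I + p(p-2)|u|^{p-4}uu^\top$, whose operator norm is at most $p(p-1)|u|^{p-2}$. So by Taylor's formula with integral remainder,
\[
|a + v|^p \le |a|^p + p|a|^{p-2}\langle a, v\rangle + \frac{p(p-1)}{2}\sup_{t\in[0,1]}|a+tv|^{p-2}|v|^2.
\]
Taking $v = \sqrt{h}\,A\zeta$ and expectations, the linear term vanishes because $\mathbb{E}\zeta = 0$. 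The remainder term must be handled carefully: we bound $|a+tv|^{p-2} \le (|a| + |v|)^{p-2}$ and then, using $p-2 \in [0,1)$, the elementary inequality $(|a|+|v|)^{p-2} \le |a|^{p-2} + |v|^{p-2}$. This gives two pieces: $\mathbb{E}\,h|a|^{p-2}|A\zeta|^2$ and $\mathbb{E}\,h^{p/2}|A\zeta|^p$. For the first, use Young's inequality with exponents $\frac{p}{p-2}$ and $\frac{p}{2}$ to split $|a|^{p-2}\mathbb{E}|A\zeta|^2$ into an $|a|^p$ contribution and an $\mathbb{E}|A\zeta|^p$-type contribution, and use $\mathbb{E}|A\zeta|^2 \le \|A\|^2\mathbb{E}|\zeta|^2 \le \|A\|^2 (\mathbb{E}|\zeta|^p)^{2/p}$; likewise $\mathbb{E}|A\zeta|^p \le \|A\|^p\mathbb{E}|\zeta|^p$. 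Bookkeeping the constants so that the coefficient of $|a|^p$ comes out exactly $1 + \frac{(p-1)(p-2)}{2}h$ and the coefficient of $\|A\|^p\mathbb{E}|\zeta|^p$ comes out $h(1 + p + h^{p/2-1})$ is the delicate part; this is exactly where one follows the computation in Lemma 3.1 of~\cite{PagSagMQ}. I expect this constant-tracking in part (a) to be the main obstacle — everything else is routine.

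Part (b): This is a direct substitution. Plug the hypotheses $|a| \le |x|(1+Lh) + Lh$ and $\|A\|^p \le 2^{p-1}\Upsilon^p h(1+|x|^p)$ into the bound from (a). For the $|a|^p$ term, expand $(|x|(1+Lh)+Lh)^p$; since $(1+Lh)^p \le e^{pLh}$ and one can absorb the cross terms coming from the additive $Lh$ into an $O(h)$ correction, one gets $|a|^p \le e^{pLh}|x|^p + C h$ for a suitable constant, and combined with the factor $1 + \frac{(p-1)(p-2)}{2}h$ one collects the exponent $\kappa_p = \frac{(p-1)(p-2)}{2} + 2pL$ (the factor $2pL$ rather than $pL$ giving room to absorb lower-order terms). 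For the diffusion term, $h(1+p+h^{p/2-1})\cdot 2^{p-1}\Upsilon^p h(1+|x|^p)\mathbb{E}|\zeta|^p = K_p h(1+|x|^p)$, which contributes $K_p h|x|^p$ and $K_p h$. Regrouping all the $|x|^p$ coefficients gives $e^{\kappa_p h} + K_p h$ (using $1 + \frac{(p-1)(p-2)}{2}h \cdot e^{pLh} \le e^{\kappa_p h}$ type estimates and absorbing the $h^2$ terms), and all the constant-order terms give $(e^{\kappa_p h}L + K_p)h$. Again the only real work is arranging the elementary inequalities $1+u \le e^u$ and $(1+u)^p \le e^{pu}$ so that the stated closed forms for $\kappa_p$ and $K_p$ emerge exactly; I would present this as a short computation referring back to part (a) and to~\cite{PagSagMQ}.
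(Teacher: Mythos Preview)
Your approach is essentially the paper's. In part~(a) you derive the paper's starting ``elementary inequality''
\[
|a+u|^p \le |a|^p + p|a|^{p-2}(a\,|\,u) + \tfrac{p(p-1)}{2}\big(|a|^{p-2}|u|^2 + |u|^p\big)
\]
via Taylor's formula plus the subadditivity $(|a|+|v|)^{p-2}\le |a|^{p-2}+|v|^{p-2}$ for $p-2\in[0,1)$; the paper simply states this inequality and proceeds. From there the Young step with exponents $\tfrac{p}{p-2},\tfrac{p}{2}$ is identical (the paper applies Young pointwise to $|a|^{p-2}|A\zeta|^2$ before taking expectations, which is marginally cleaner than your route through $\mathbb E|A\zeta|^2$, but equivalent).

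For part~(b) you have the right outline but are vague at exactly the one place where a trick is needed. The paper handles $|a|^p\le\big(|x|(1+Lh)+Lh\big)^p$ not by ``expanding and absorbing cross terms'' but by writing
\[
|x|(1+Lh)+Lh = (1+2Lh)\Big(\tfrac{1+Lh}{1+2Lh}\,|x| + \tfrac{Lh}{1+2Lh}\cdot 1\Big),
\]
a genuine convex combination of $|x|$ and $1$, so that Jensen for $t\mapsto t^p$ gives
\[
|a|^p \le (1+2Lh)^p|x|^p + (1+2Lh)^{p-1}Lh.
\]
This is precisely why the factor $2pL$ (rather than $pL$) appears in $\kappa_p$: it is not slack to ``absorb lower-order terms'' but the exact output of this convexity step. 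After that, $(1+\tfrac{(p-1)(p-2)}{2}h)(1+2Lh)^p\le e^{\kappa_p h}$ via $1+u\le e^u$, and the diffusion term combines with the hypothesis on $\|A\|^p$ to produce $K_p h(1+|x|^p)$ directly. Making this convexity step explicit is the only missing piece in your write-up.
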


\begin{rem} \label{RemKeyLem}  It follows from Lemma \ref{lem:key}, more particularly from Equation  \eqref{EqKeyLem}, that if   $F_k$ has the generic form $F_{k}(x, \zeta) = a(x) + \sqrt{h} A \zeta$, with  $|a(x)|\le |x|(1+Lh)+Lh$ and $\|A\|^p \le 2^{p-1} \Upsilon^{^p} h (1+|x|^p)$ then we may choose $\alpha_{p,k} = \big(  e^{\kappa_p  h } L + K_p \big) h$ and $\beta_{p,k} =  e^{ \kappa_p h}  + K_p h$.

\end{rem}

\subsection{Regular recursive quantization}\label{subset:regrecquant}
The following result gives a general quadratic quantization error bound associated to the standard  recursive quantization and according to  the coefficients $\alpha_{p,k}$, $\beta_{p,k}$ and  $F_k$.

\begin{prop} \label{PropGlobBound} Let $\bar X$ and $\widehat X$ be defined by \eqref{EqGenHatX} and \eqref{EqGenTildeX} and suppose that both assumptions $({\rm Lip})$ and  $({\rm SL})_p$ (for some $p \in (2,3]$)  hold. Then, 
\begin{equation}  \label{EqRecQuantBound}
\big\| \bar X_k-\widehat X_k\big\|_{_2} \le C_{d, p} \sum_{i=0}^k   [F_{i+1:k}]_{\rm Lip}  \left[ \sum_{\ell=0}^i  \alpha_{p,\ell}\beta_{p,\ell:i}\right] ^{\frac 1p} \, N_i^{-\frac 1d}
\end{equation}
where $C_{d, p}\le C_{p, d,  \vert \cdot \vert}\, d^{\frac{p-2}{2p}}$ ($C_{p, d, \vert \cdot \vert}$ is the  universal constant appearing in the  Pierce Lemma, see~\eqref{EqLemPierce} and Lemma~\ref{PierceExt} later on),  $\beta_{p,\ell:k}=\prod_{m=\ell+1}^{k}\beta_{p,m}$,  with the convention $\alpha_{p,0}=\Vert  \bar X_0 \Vert^{^p}_{_p}$ and $\prod_{\emptyset}= 1$, and where
\[
 [F_{k+1:k}]_{\rm Lip}=1  \qquad \mbox{ and } \quad   [F_{\ell:k}]_{\rm Lip}:=  \prod_{i=\ell+1}^k [F_{i}]_{\rm Lip},\; 0\le i\le  k\le n.
\]

\end{prop}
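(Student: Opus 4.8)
The plan is to prove the bound \eqref{EqRecQuantBound} by induction on $k$, combining a one-step error propagation estimate with the Pierce-type non-asymptotic quantization bound. First I would set $\varepsilon_k := \| \bar X_k - \widehat X_k\|_2$ and decompose the error into a ``transmitted past error'' term and a ``fresh quantization'' term: write $\bar X_k - \widehat X_k = (\bar X_k - \widetilde X_k) + (\widetilde X_k - \widehat X_k)$, where $\widetilde X_k = F_k(\widehat X_{k-1}, Z_k)$. For the first term, since $\bar X_k = F_k(\bar X_{k-1}, Z_k)$ and $Z_k$ is independent of both $\bar X_{k-1}$ and $\widehat X_{k-1}$, a conditioning argument together with assumption $({\rm Lip})$ gives $\|\bar X_k - \widetilde X_k\|_2 \le [F_k]_{\rm Lip}\,\|\bar X_{k-1} - \widehat X_{k-1}\|_2 = [F_k]_{\rm Lip}\,\varepsilon_{k-1}$. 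For the second term, $\widehat X_k = {\rm Proj}_{\Gamma_k}(\widetilde X_k)$ with $\Gamma_k$ an $L^2$-optimal grid of level $N_k$ for the law of $\widetilde X_k$, so $\|\widetilde X_k - \widehat X_k\|_2 = e_{N_k,2}(\widetilde X_k)$, the optimal quantization error, to which I apply the extended Pierce Lemma (referenced as Lemma~\ref{PierceExt}): $e_{N_k,2}(\widetilde X_k) \le C_{p,d,|\cdot|}\, d^{(p-2)/(2p)}\, \sigma_p(\widetilde X_k)\, N_k^{-1/d} \le C_{d,p}\,\|\widetilde X_k\|_p\, N_k^{-1/d}$.

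Next I would control $\|\widetilde X_k\|_p$ using the sub-linear growth assumption $({\rm SL})_p$. Since $\widetilde X_k = F_k(\widehat X_{k-1}, Z_k)$ with $Z_k$ independent of $\widehat X_{k-1}$, conditioning on $\widehat X_{k-1}$ and applying $({\rm SL})_p$ pointwise gives $\E|\widetilde X_k|^p \le \alpha_{p,k} + \beta_{p,k}\,\E|\widehat X_{k-1}|^p$. But $\widehat X_{k-1}$ is a projection of $\widetilde X_{k-1}$, hence $|\widehat X_{k-1}| \le$ essentially $|\widetilde X_{k-1}|$ in the relevant sense (the nearest-neighbour projection is a best $L^p$ approximation among grid points, and one can bound $\|\widehat X_{k-1}\|_p \le \|\widetilde X_{k-1}\|_p$ up to the trivial fact that projecting onto a grid containing a point near $0$, or more carefully using $\|\widehat X_{k-1}\|_p \le \|\widetilde X_{k-1} - \widehat X_{k-1}\|_p + \|\widetilde X_{k-1}\|_p$ combined with the optimality — I would use the clean inequality $\E|\widehat X_{k-1}|^p \le \E|\widetilde X_{k-1}|^p$ which holds because $x \mapsto {\rm Proj}_{\Gamma}$ contracts toward the nearest grid point; if that is delicate, the fallback is $\E|\widehat X_{k-1}|^p \le 2^{p-1}\E|\widetilde X_{k-1}|^p$ plus reindexing of constants). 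Iterating $\E|\widetilde X_k|^p \le \alpha_{p,k} + \beta_{p,k}\,\E|\widetilde X_{k-1}|^p$ down to $k=0$, with $\E|\widetilde X_0|^p = \E|\bar X_0|^p = \alpha_{p,0}$, yields $\E|\widetilde X_k|^p \le \sum_{\ell=0}^k \alpha_{p,\ell}\,\beta_{p,\ell:k}$ with $\beta_{p,\ell:k} = \prod_{m=\ell+1}^k \beta_{p,m}$, exactly the bracketed quantity in \eqref{EqRecQuantBound}.

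Putting the pieces together gives the recursion $\varepsilon_k \le [F_k]_{\rm Lip}\,\varepsilon_{k-1} + C_{d,p}\big(\sum_{\ell=0}^k \alpha_{p,\ell}\beta_{p,\ell:k}\big)^{1/p} N_k^{-1/d}$, with $\varepsilon_0 = \|\bar X_0 - \widehat X_0\|_2 \le C_{d,p}\,\alpha_{p,0}^{1/p}\,N_0^{-1/d}$ (or $0$ if $\bar X_0$ is deterministic). Unrolling this linear recursion, each fresh term generated at step $i$ gets multiplied by the product $[F_{i+1}]_{\rm Lip}\cdots[F_k]_{\rm Lip} = [F_{i+1:k}]_{\rm Lip}$ on its way to step $k$, producing precisely the sum $\sum_{i=0}^k [F_{i+1:k}]_{\rm Lip}\big(\sum_{\ell=0}^i \alpha_{p,\ell}\beta_{p,\ell:i}\big)^{1/p} N_i^{-1/d}$. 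I expect the main obstacle to be the bookkeeping around $\|\widehat X_{k-1}\|_p$ versus $\|\widetilde X_{k-1}\|_p$ — i.e., making sure the $L^p$-moment does not inflate when passing through the nearest-neighbour projection so that the constant $\beta_{p,\ell:i}$ in the statement is exactly the product of $\beta_{p,m}$'s with no stray factors — and, relatedly, invoking the correct dimensional form of Pierce's Lemma (Lemma~\ref{PierceExt}) to get the constant $C_{d,p} \le C_{p,d,|\cdot|}\,d^{(p-2)/(2p)}$; the rest is a routine discrete Gr\"onwall-type unrolling.
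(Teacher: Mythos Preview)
Your overall architecture matches the paper's proof exactly: the same error decomposition, the same Lipschitz-based one-step propagation, the same discrete Gr\"onwall unrolling, and the same appeal to Pierce's Lemma for the fresh quantization term. The only genuine gap is precisely the point you flagged yourself: the passage from $\E|\widehat X_{k-1}|^p$ to $\E|\widetilde X_{k-1}|^p$.

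None of the arguments you sketch for $\E|\widehat X_{k-1}|^p \le \E|\widetilde X_{k-1}|^p$ is correct. Nearest-neighbour projection does \emph{not} contract the Euclidean norm in general (take $\Gamma=\{1,2\}$ and $x=0.3$), and there is no reason for the grid to contain a point near the origin. Your fallback $\E|\widehat X_{k-1}|^p \le 2^{p-1}\E|\widetilde X_{k-1}|^p$ would work but would replace $\beta_{p,\ell:i}$ by $(2^{p-1})^{i-\ell}\beta_{p,\ell:i}$, which is not the statement being proved. The paper's argument is cleaner and uses a property you did not invoke: an $L^2$-optimal quantizer is \emph{stationary}, i.e.\ $\widehat X_k = \E\big(\widetilde X_k \,\big|\, \widehat X_k\big)$. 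Since $|\cdot|^p$ is convex, conditional Jensen gives
\[
\E\,|\widehat X_k|^p = \E\,\big|\E(\widetilde X_k\,|\,\widehat X_k)\big|^p \le \E\,|\widetilde X_k|^p,
\]
with no extra constant. This is the missing ingredient; once you insert it, your recursion $\E|\widetilde X_{k+1}|^p \le \alpha_{p,k+1}+\beta_{p,k+1}\E|\widetilde X_k|^p$ holds as written and the rest of your argument goes through verbatim.
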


\begin{proof} {\em First step}.  We have, for every $k\!\in \{0,\ldots,n-1\}$ 
\begin{eqnarray*}
\widehat X_{k+1}-\bar X_{k+1} & \, = \, &\widehat X_{k+1}-\widetilde X_{k+1} + \widetilde X_{k+1}- \bar X_{k+1}\\
&\, =\, & \widehat X_{k+1}-\widetilde X_{k+1} + F_{k+1}(\widehat X_k, Z_{k+1})- F_{k+1}( \bar X_k, Z_{k+1})
\end{eqnarray*}
by the very definition of the sequences $(\bar X_k)_k$ and $(\widetilde X_k)_k$. Hence,
\[
\big\| \widehat X_{k+1}-\bar X_{k+1} \big\|_{_2} \le \big\| \widehat X_{k+1}-\widetilde X_{k+1} \big\|_{_2} + [F_{k+1}]_{\rm Lip}  \big\| \bar X_k-\widehat X_k\big\|_{_2}
\]
owing to  Assumption   $({\rm Lip})$. A straightforward induction shows that, for every $k\!\in \{0, \ldots,n\}$
\begin{equation}\label{eq:errorbound}
\big\|\bar X_k-\widehat X_k\big\|_{_2} \le \sum_{\ell=0}^k   [F_{\ell:k}]_{\rm Lip} \big\|\widehat X_\ell-\widetilde X_\ell\big\|_{_2}.
\end{equation}

On the other hand,  if we assume that all the grids $\Gamma_k$ are $L^2$-optimal, then it follows from the extended Pierce Lemma (see Equation~\eqref{EqLemPierce}) that,  for every $k\!\in \{0, \ldots,n\}$, 
\begin{eqnarray}
\big\vert  \widehat X_{k}-\widetilde X_{k} \big\vert_{_2} & = & \big\vert {\rm Proj}_{\Gamma_k}(\widetilde X_k)- \widetilde X_k \big\vert_{_2} \nonumber \\
&\le& C_{d,p, \vert \cdot \vert}\, \sigma_{p}(\widetilde X_k) \, |  \Gamma_k|^{-\frac 1d}  \label{eq:Pierce}
\end{eqnarray}
where $\sigma_{p}(Y)= \inf_{a\in \R^d} \|Y-a\|_{_{p}}\le \|Y\|_{_{p}}$. 

\medskip

\noindent {\em Second step}. The next step is to control this pseudo-standard deviation terms $\sigma_{p}(\widetilde X_k)$. In fact, we will simply upper-bound $\|\widetilde X_k\|_{p}$.
Using again that the grids $\Gamma_k$ are $L^2$-optimal (and then, stationary), we get
\begin{eqnarray*}
\E\, \vert \widehat X_{k+1} \vert^p 
= \E\, |\E\big(\widetilde X_{k+1}\,|\, \widehat X_{k+1}\big)|^p \le  \E\,| \widetilde X_{k+1}|^p,
\end{eqnarray*}
owing to Jensen's Inequality. On the other hand, using this time Assumption~$({\rm SL})_p$ yields
\begin{eqnarray*}
\E \big|\widetilde X_{k+1}\big|^p &= & \E\,\big|F_{k+1}(\widehat X_k, Z_{k+1})\big|^p\\
&=& \E\,\big( \E( |F_{k+1}(\widehat X_k, Z_{k+1})|^p\,|\,\widehat X_k )\big)\\
&\le& \E\big(\alpha_{p,k+1}+\beta_{p,k+1}|\widehat X_k|^p\big)  
\end{eqnarray*}
since $\widehat X_k$ and $Z_{k+1}$ are independent. We use the stationarity property of the quantizers $\widehat X_k$ and Jensen inequality  to get
\begin{eqnarray*}
\E \big|\widetilde X_{k+1}\big|^p &\le& \E\big(\alpha_{p,k+1}+\beta_{p,k+1}  | \E (\widetilde X_k | \widehat X_k) |^p\big) \\
&= & \E\big(\alpha_{p,k+1}+\beta_{p,k+1}\E (|\widetilde X_k|^p | \widehat X_k )\big) \\
  &= & \E\big(\alpha_{p,k+1}+\beta_{p,k+1} |\widetilde X_k|^p | \big).
\end{eqnarray*}
Then, we derive by a standard induction (discrete time Gronwall Lemma) that, 
\begin{equation}  \label{EqBoundXkp}
\E\,\big| \widetilde X_{k}\big|^p\le \sum_{\ell=0}^k \alpha_{p,\ell}\,\beta_{p,\ell:k}
\end{equation}
where $\beta_{p,\ell:k}=\prod_{m=\ell+1}^{k}\beta_{p,m}$
 and  with the convention $\alpha_{p,0}=\big\|  \bar X_0\big\|^p_{_p}$ and $\prod_{\emptyset}= 1$. 

One concludes by plugging this bound  into~\eqref{eq:Pierce} and then in~\eqref{eq:errorbound} which yields
\[
\big\| \bar X_k-\widehat X_k\big\|_{_2} \le C_{d, p} \sum_{i=0}^k   [F_{i+1:k}]_{\rm Lip}  \left[ \sum_{\ell=0}^i  \alpha_{p,\ell}\beta_{p,\ell:i}\right] ^{\frac 1p} \, N_i^{-\frac 1d}
\]
where $N_k = |\Gamma_k\big|$, $k=0:n$.
\end{proof}

In general this approach   cannot been efficiently implemented in  dimension $d\ge 2$ since  it requires to compute an multidimensional optimal grid. Stochastic optimization procedures that should be called upon for that purpose are time consuming. This leads us to introduce the {\em recursive product quantization}  introduced in~\cite{FioPagSag} in a Brownian diffusion framework and for which we propose an analysis in full generality in the next subsection.

\subsection{Recursive product  quantization  and revisited Pierce's lemma} \label{subsec:regprodquant}
Let us briefy recall what recursive product quantization is. We refer to~\cite{FioPagSag} for more details. Consider  the $\mathbb R^d$-valued diffusion process  $(X_t)_{t\in [0,T]}$ defined by \eqref{EqSignalIntro} et let    $(\bar  X_{t_k})_{k=0,\ldots,n}$ be the associated Euler scheme process (with regular discretization step $h = T/n$) defined  by $\bar X_0 = X_0$ and 
\begin{equation}  \label{EqEulerPQ}
\bar X_{t_{k+1}}= \bar X_{t_k} + b(t_k,\bar X_{t_k}) h  + \sigma(t_k,\bar X_{t_k}) \sqrt{h}\, Z_{k+1}, \qquad  Z_{k+1} \sim {\cal N}(0; I_d),
\end{equation} 
where  $b:[0,T] \times \mathbb R^d \rightarrow \mathbb R^d$ and   $\sigma:[0,T] \times \mathbb R^d \rightarrow \mathbb R^{d \times q}$.  We recall that the recursive product quantization of the process $\bar X$ is defined by the recursion \eqref{EqAlgorithmIntro}. So that  for every $k \ge 0$, we define the recursive product quantization of $\bar X_k$ as $\hat {X}_k = (\widehat X_k^1, \ldots, \widehat X_k^d)$, where each  $\widehat X_k^{\ell}$ is the recursive quantization of the $\ell$-th component $\bar X_k^{\ell}$ of the vector $\bar X_k$
 and is defined  by $\widehat X_k^{\ell}  = {\rm Proj}_{\Gamma_k^{\ell}}(\widetilde X_k^{\ell}),$ with
\begin{eqnarray*}
\widetilde X_{k}^{\ell}  &=&   {\cal E}_{k-1}^{\ell} ( \widehat X_{k-1},Z_k) \\
&= & \widehat X_{k-1}^{\ell} + h b^{\ell}(t_{k-1},\widehat X_{k-1}) + \sqrt{h} (\sigma^{\ell  \bullet}(t_{k-1},\widehat X_{k-1}) \vert  Z_k),  \ Z_k \sim {\cal N}(0; I_d).
\end{eqnarray*}
Now, set 
\begin{equation}  \label{EqDefProdQua}
\widetilde X_k  =  \begin{pmatrix}  \widetilde X^1_k \\ \vdots \\  \widetilde X^d_k   \end{pmatrix} \quad \mbox{ and } \quad   \bar X_{k}  = {F}_{k} (\bar X_{k-1}, Z_{k}) := \begin{pmatrix} F_k^{1}(\bar X_{k-1},   Z_k) \\   \vdots  \\ F_k^{d}(\bar X_{k-1}, Z_k) \end{pmatrix}
\end{equation}
where
 \[
F_k^{\ell}(x, z) =  x^{\ell} + h b^{\ell}(t_k,x) + \sqrt{h} (\sigma^{\ell  \bullet}(t_k,x) \vert  z),  \ z = (z^1, \ldots,z^q)\in \mathbb R^q.
\]
In this recursive product quantization framework  we  need to extend the results of Proposition \ref{PropGlobBound}. This is done  in Proposition~\ref{PropExtGlobBound} below.    The proof that follows is nothing but the second step of proof of the extended Pierce lemma,  that extends scalar Pierce's Lemma from $1$ to $d$ dimensions. It is reproduced for the reader's convenience. As stated it emphasizes the dependence of the constant $C_{d,p}$ in the dimension $d$.

\medskip

To establish the error bounds for the strong error in full generality, we need to revisit Pierce's Lemma to enhance the role played by the product quantization. Let $Y= (Y^1,\ldots,Y^d):(\Omega,{\cal A}, \PP)\to \R^d$ be a $d$-dimensional random vector. Let us recall how product quantization is defined: for every $\ell\!\in \{1,\ldots,d\}$, let $\widehat Y^{\ell, \Gamma_\ell}$ denote a scalar Voronoi (following the nearest neighbour rule) quantization of $Y^{\ell}$ induced by a finite grid $\Gamma_{\ell}\subset \R$. Then the product quantization of $Y$ by the product grid $\Gamma = \Gamma_1\times\cdots\times \Gamma_d\subset \R^d$ is defined by $\widehat Y^{\Gamma}= \big(\widehat Y^{1,\Gamma_1},\ldots, Y^{d,\Gamma_d}\big)$. One easily checks that $\widehat Y^{\Gamma}$ is a Voronoi quantization of $Y$ induced by $\Gamma$ with respect to any $\ell^p$-norm (or pseudo-norm), $0<p<+\infty$.

The revisited version of Pierce lemma below deals with this product quantization when $\R^d$ is equipped with the $\ell^r$-norm (or pseudo-norm) and the mean quantization error is measured in $L^r(\PP)$. 


\begin{lem}[Revisited Pierce's lemma]  \label{PierceExt} Let $p>r>0$. Assume  that $\R^d$ is endowed with the $\ell^r$-norm $|y|_{_{\ell^r}} =\big( Ê\sum _{i=1}^d|y^{\ell}|^r\big)^{1/r}$ and that $\|\cdot\|_r$ is defined accordingly. 

There exists a real constant $C_{p}\!\in (0,+ \infty)$ such that, for every  random vector $Y= (Y^1,\ldots,Y^d):(\Omega,{\cal A}, \PP)\to \R^d$ and every integer (or level)  $N\ge 1$, 
 \begin{equation}  \label{EqLemPierceExt} 
\inf\left\{\big\|Y-\big(\widehat Y^{1,\Gamma_1},\ldots, Y^{d,\Gamma_d}\big)\big\|_r, \quad \prod_{1\le \ell\le d}|\Gamma_\ell|  \le N\right\}  \le  C_{p} \, d^{\frac 1 r-\frac 1p} \sigma_{p}(Y) N^{-\frac{1}{d}}
 \end{equation} 
 where, for every $p\!\in (0, +\infty)$,  
 $  \sigma_{p}(Y) = \inf_{\zeta \in \mathbb R^d} \Vert Y-\zeta  \Vert_{_{p}} \le +\infty$ denotes the pseudo-$L^p$-standard deviation of $Y$.
\end{lem}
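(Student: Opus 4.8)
The plan is to reduce the $d$-dimensional product quantization bound to $d$ applications of the scalar (one-dimensional) Pierce Lemma, one per coordinate, and then optimize the allocation of the total budget $N$ among the $d$ marginal grids. First I would fix a random vector $Y=(Y^1,\ldots,Y^d)$ and, for each coordinate $\ell$, invoke scalar Pierce's Lemma in $L^r$: there is a universal constant $c_{p,r}$ (depending only on $p$ and $r$, since we are in dimension one) such that for every level $N_\ell\ge 1$ one can find a scalar grid $\Gamma_\ell\subset\R$ with $|\Gamma_\ell|\le N_\ell$ and $\big\|Y^\ell-\widehat Y^{\ell,\Gamma_\ell}\big\|_r\le c_{p,r}\,\sigma_p(Y^\ell)\,N_\ell^{-1}$, where $\sigma_p(Y^\ell)=\inf_{a\in\R}\|Y^\ell-a\|_p$. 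I would want the $L^r$-version of scalar Pierce here; this follows from the standard statement (which is usually given for $r=2$ versus $p>2$, but the same random-quantizer argument works verbatim for any $p>r>0$, giving a constant depending only on $p,r$), and I will either cite it or note the trivial adaptation.

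Next, since $\R^d$ carries the $\ell^r$-(pseudo-)norm, the product quantization error decouples additively in the $r$-th power:
\[
\big\|Y-\big(\widehat Y^{1,\Gamma_1},\ldots,\widehat Y^{d,\Gamma_d}\big)\big\|_r^r
=\sum_{\ell=1}^d \big\|Y^\ell-\widehat Y^{\ell,\Gamma_\ell}\big\|_r^r
\le c_{p,r}^r\sum_{\ell=1}^d \sigma_p(Y^\ell)^r\,N_\ell^{-r}.
\]
Here one must be a little careful when $0<r<1$, since then $|\cdot|_{\ell^r}$ is only a pseudo-norm and $\|\cdot\|_r$ is not subadditive; but the identity above is an exact identity of $r$-th powers (Fubini/linearity of expectation on $|Y^\ell-\widehat Y^{\ell,\Gamma_\ell}|^r$), so no subadditivity is needed — this is precisely why the statement is phrased with the $\ell^r$-norm rather than an arbitrary norm, and I would emphasize that point.

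Then I would choose the allocation. Writing $s_\ell:=\sigma_p(Y^\ell)$, I want to minimize $\sum_\ell s_\ell^r N_\ell^{-r}$ subject to $\prod_\ell N_\ell\le N$ (relaxing $N_\ell$ to positive reals, then rounding, which only costs an absolute constant absorbed into $C_p$). A Lagrange/AM–GM computation gives the optimal choice $N_\ell\propto s_\ell^{r/(r+1)}\cdot$const, but a cleaner route that directly produces the stated $\sigma_p(Y)$ on the right-hand side is: bound $s_\ell\le \big(\sum_j s_j^p\big)^{1/p}=\|(\,|Y^1-a^1|,\ldots,|Y^d-a^d|)\,\|_p$ crudely by monotonicity, take all $N_\ell=\lfloor N^{1/d}\rfloor$ so that $\prod_\ell N_\ell\le N$, and get
\[
\Big(\sum_{\ell=1}^d s_\ell^r N_\ell^{-r}\Big)^{1/r}
\le \lfloor N^{1/d}\rfloor^{-1}\Big(\sum_{\ell=1}^d s_\ell^r\Big)^{1/r}
\le \lfloor N^{1/d}\rfloor^{-1}\, d^{\frac1r-\frac1p}\Big(\sum_{\ell=1}^d s_\ell^p\Big)^{1/p},
\]
the last inequality being the standard $\ell^r\hookrightarrow\ell^p$ comparison in $\R^d$ (for $p>r$, $\|u\|_{\ell^r}\le d^{1/r-1/p}\|u\|_{\ell^p}$). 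Finally, taking the infimum over $a=(a^1,\ldots,a^d)\in\R^d$ of $\big(\sum_\ell\|Y^\ell-a^\ell\|_p^p\big)^{1/p}$, and noting this infimum equals $\sigma_p(Y)$ for the $\ell^p$-pseudo-norm computation — together with $\lfloor N^{1/d}\rfloor^{-1}\le 2\,N^{-1/d}$ for $N\ge1$ — yields the claim with $C_p=2\,c_{p,r}$ (absorbing the harmless dependence on $r$, which is itself a function of the data $p$ and $d$, or simply keeping $C_{p,r}$).

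The main obstacle I anticipate is bookkeeping around the case $0<r<1$: making sure that I never use triangle inequality for $\|\cdot\|_r$ or for $|\cdot|_{\ell^r}$, and that every step is an identity or inequality between $r$-th powers or between genuine norms ($\|\cdot\|_p$ with $p$ possibly $\ge1$ or not — if $p<1$ the same caution applies to $\sigma_p$, but $\sigma_p(Y)$ is defined directly as an infimum of $\|\cdot\|_p$ so it is well-posed regardless). A secondary point is to confirm that the one-dimensional Pierce constant can be taken independent of $d$ (it can, trivially, since it is a purely scalar statement) so that the only $d$-dependence is the explicit $d^{1/r-1/p}$ factor.
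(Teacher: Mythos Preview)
Your proposal is correct and follows essentially the same route as the paper: apply scalar Pierce's Lemma to each coordinate, sum the $r$-th powers (an exact identity, as you rightly stress for the case $0<r<1$), allocate the budget equally as $N_\ell\approx N^{1/d}$, and then use the $\ell^r$--$\ell^p$ comparison (equivalently H\"older) on the vector of marginal $\sigma_p$'s to extract the $d^{1/r-1/p}$ factor. Your treatment is in fact slightly more careful than the paper's on two points: you handle the integer rounding $\lfloor N^{1/d}\rfloor$ explicitly, and you are explicit that the constant from scalar Pierce depends on both $p$ and $r$; the only step you leave implicit is the final inequality $|\cdot|_{\ell^p}\le |\cdot|_{\ell^r}$ needed to pass from the $\ell^p$-based $\sigma_p(Y)$ to the $\ell^r$-based one appearing in the statement, but this only strengthens your bound.
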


This reformulation says that the universal non-asymptotic bound provided by Pierce's Lemma is obtained by product quantization with the above constants. The above infimum is in fact a minimum since, if $Y\!\in L^r(\PP)$, every component can be optimally quantized by an optimal grid $\Gamma_{\ell}^{(r)}$. This follows from the fact that
\[
 \Vert Y   - \widehat Y \Vert_{_r}^r  = \sum_{\ell=1}^d   \Vert  Y^{\ell}   - \widehat Y^{\ell}  \Vert_{_r}^r
\]

\begin{proof}For simplicity set $N_{\ell}=|\Gamma^{\ell}|$, $\ell=1,\ldots,d$. When $d=1$ the above statement si simply the standard one dimensional Pierce Lemma (see~\cite{LUPAaap, PAGSpring2018}).    It follows from this one dimensional Pierce lemma  that
\begin{align*}
 \Vert Y   - \widehat Y  \Vert_{_r}^r  &= \sum_{\ell=1}^d   \Vert Y^{\ell}   - \widehat Y^{\ell}  \Vert_{_r} ^r  \le C_{1, p}^{r}\, \sum_{\ell=1}^d \big(N_{\ell}\big)^{-r}  \sigma_{p}^r (Y^{\ell}) \\
 &\le C_{1, p}^r \,  N^{-\frac{r}{d}} \sum_{\ell=1}^d  \sigma_{p}^r (Y^{\ell}).
\end{align*}
Now,  for any  $(a_1, \ldots, a_d) \in \mathbb R^d$, 
\begin{align*}
\sum_{\ell=1}^d  \sigma_{p}^r (Y^{\ell}) & \le  \sum_{\ell=1}^d  \Big(\mathbb E \vert Y^{\ell} - a_i \vert^{p}\Big)^ \frac{r}{p} \\
&\le   d^{^{\frac{p-r}{p}}}\Big( \sum_{\ell=1}^d  \mathbb E \vert Y^{\ell} - a_i \vert^{p}\Big)^ \frac{r}{p} \\
& =  d^{^{\frac{p-r}{p}}}\Big(  \mathbb E\big \vert Y - (a_1, \ldots,a_d) \big\vert_{_{\ell^{p}}}^{p}\Big)^ \frac{r}{p},
\end{align*}
the second inequality coming from the H\"{o}lder inequality. Now, using the inequality  $|\cdot |_{_{\ell^{p}}} \le \vert \cdot \vert_{_{\ell^r}}$, we deduce that
\begin{equation*}
 \Vert Y   - \widehat Y  \Vert_{_r}^r  \le   C_{1, p}^r  N^{-\frac{r}{d}} \, d^{^{\frac{p-r}{p}}}\Big(  \mathbb E\big \vert Y - (a_1, \ldots,a_d) \big\vert^{p}\Big)^ \frac{r}{p}.
\end{equation*}
The result follows by taking the $r$-th root on both sides of the previous inequality, then the infimum over $(a_1, \ldots, a_d) \in \R^d$ and setting $C_{p}= C_{1,p}$. 
\end{proof}

\begin{rem} In fact the above proof is only revisiting the original proof of  Pierce Lemma i from~\cite{LUPAaap} (and stated in its final form in~\cite{PAGSpring2018}). To our best knowledge, this proof in higher dimension always relies on a product quantization argument so that the established bound holds for product quantization as emphasized above.
\end{rem}

%
%

We are now in position to give the result on the quadratic error bound of the recursive product quantization.

\begin{prop} \label{PropExtGlobBound}  Let $\bar X$ and $\widehat X$ be defined by \eqref{EqDefProdQua} and suppose that the assumptions $({\rm Lip})$ and  $({\rm SL})_{p}$ hold for some $p\!\in (2,3]$. Then,
\begin{equation}  \label{EqRecQuantBound}
\big\| \bar X_k-\widehat X_k\big\|_{_2} \le  C_{p} \,d^{^{\frac{p-2}{2p}}}  \sum_{i=0}^k   [F_{i+1:k}]_{\rm Lip}  \left[ \sum_{\ell=0}^i  \alpha_{p,\ell} \,\beta_{p,\ell:i} \,d^{^{(\frac{p}{2}-1)(i-\ell)} } \right] ^{\frac{1}{p}} \, N_{i}^{-\frac 1d}
\end{equation}
where $ C_{p}$ is  a positive real constant. 
%
\end{prop}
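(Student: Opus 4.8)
The plan is to mimic, essentially line by line, the two-step proof of Proposition~\ref{PropGlobBound}, changing exactly the two places where product quantization departs from ``vectorial'' quantization. First, Pierce's Lemma must be replaced by its revisited version, Lemma~\ref{PierceExt}, applied with $r=2$: since the ambient Euclidean norm \emph{is} the $\ell^2$-norm, this is legitimate and it is precisely what introduces the prefactor $d^{\frac{p-2}{2p}}$. Second, the propagation of $L^p$-moments can now only rely on the \emph{componentwise} stationarity of product quantizers, which, as explained below, costs an extra dimensional factor $d^{\frac p2-1}$ at each transition and ultimately produces the factors $d^{(\frac p2-1)(i-\ell)}$ in the statement. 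As in Proposition~\ref{PropGlobBound}, only the abstract hypotheses $({\rm Lip})$ and $({\rm SL})_p$ are used.

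For the first step I would argue exactly as in Proposition~\ref{PropGlobBound}: from $\widetilde X_{k+1}=F_{k+1}(\widehat X_k,Z_{k+1})$, $\bar X_{k+1}=F_{k+1}(\bar X_k,Z_{k+1})$ and assumption $({\rm Lip})$ one gets $\|\widehat X_{k+1}-\bar X_{k+1}\|_2\le\|\widehat X_{k+1}-\widetilde X_{k+1}\|_2+[F_{k+1}]_{\rm Lip}\|\bar X_k-\widehat X_k\|_2$, hence, by a straightforward induction, the bound~\eqref{eq:errorbound}. Then, since $\widehat X_k$ is the product quantization of $\widetilde X_k$ built from the scalar $L^2$-optimal grids $\Gamma_k^\ell$ of $\widetilde X_k^\ell$ (with $\prod_{\ell=1}^d|\Gamma_k^\ell|=N_k$), the revisited Pierce Lemma~\ref{PierceExt} with $r=2$ yields $\|\widehat X_k-\widetilde X_k\|_2\le C_p\,d^{\frac{p-2}{2p}}\,\sigma_p(\widetilde X_k)\,N_k^{-1/d}\le C_p\,d^{\frac{p-2}{2p}}\,\|\widetilde X_k\|_p\,N_k^{-1/d}$.

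The second step is the one requiring care. Writing $\widehat X_k=(\widehat X_k^1,\dots,\widehat X_k^d)$ with $\widehat X_k^\ell={\rm Proj}_{\Gamma_k^\ell}(\widetilde X_k^\ell)$, I would use that each scalar optimal quantizer is stationary, so $\E(\widetilde X_k^\ell\mid\widehat X_k^\ell)=\widehat X_k^\ell$, whence $\E|\widehat X_k^\ell|^p\le\E|\widetilde X_k^\ell|^p$ by conditional Jensen, for each $\ell$. Unlike in the vectorial case one does \emph{not} have $\widehat X_k=\E(\widetilde X_k\mid\widehat X_k)$, so to pass from the components back to the vector I would invoke the norm comparison $|y|_{\ell^2}^p\le d^{\frac p2-1}|y|_{\ell^p}^p$ (valid since $p\ge2$), which gives $\E|\widehat X_k|^p\le d^{\frac p2-1}\sum_{\ell=1}^d\E|\widetilde X_k^\ell|^p=d^{\frac p2-1}\E|\widetilde X_k|_{\ell^p}^p\le d^{\frac p2-1}\E|\widetilde X_k|^p$. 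Combined with $({\rm SL})_p$ applied conditionally on $\widehat X_k$ (independent of $Z_{k+1}$), this gives the recursion $\E|\widetilde X_{k+1}|^p\le\alpha_{p,k+1}+\beta_{p,k+1}\,d^{\frac p2-1}\,\E|\widetilde X_k|^p$, and a discrete-time Gronwall argument (with the convention $\alpha_{p,0}=\|\bar X_0\|_p^p$, as in Proposition~\ref{PropGlobBound}) yields $\E|\widetilde X_k|^p\le\sum_{\ell=0}^k\alpha_{p,\ell}\,\beta_{p,\ell:k}\,d^{(\frac p2-1)(k-\ell)}$. Substituting this into the Pierce estimate of the first step and then into~\eqref{eq:errorbound} produces the announced inequality, with $C_p$ the constant of Lemma~\ref{PierceExt}. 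I expect the only real obstacle to be the bookkeeping around stationarity here: one must resist using the (now false) vectorial stationarity identity, correctly localize the constant $d^{\frac p2-1}$ to each transition, and check that it compounds over exactly the $i-\ell$ steps separating the time $\ell$ at which a quantization error is committed from the time $i$ at which it is measured — which is what theoretically degrades the bound relative to the regular recursive quantization; everything else is identical to the proof of Proposition~\ref{PropGlobBound}.
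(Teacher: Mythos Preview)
Your proposal is correct and follows essentially the same approach as the paper: the two-step structure, the use of the revisited Pierce Lemma~\ref{PierceExt} with $r=2$ to bring in the factor $d^{\frac{p-2}{2p}}$, and the replacement of vectorial stationarity by componentwise stationarity combined with the norm comparison $|y|_{\ell^2}^p\le d^{\frac p2-1}|y|_{\ell^p}^p$ (and then $|\cdot|_{\ell^p}\le|\cdot|_{\ell^2}$) to obtain the recursion $\E|\widetilde X_{k+1}|^p\le\alpha_{p,k+1}+\beta_{p,k+1}\,d^{\frac p2-1}\E|\widetilde X_k|^p$ are exactly what the paper does. Your remark that one must resist invoking the (now false) vectorial stationarity identity is precisely the point the paper emphasizes at the outset of its proof.
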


\begin{rem} Before dealing with the proof,  remark that  we may deduce from the upper bound in \eqref{EqRecQuantBound}  that 
\begin{equation*}
\big\| \bar X_k-\widehat X_k\big\|_{_2} \le  C_{p} \,d^{^{(\frac{p}{2}-1) \big(k + \frac{1}{p}\big)} }  \sum_{i=0}^k   [F_{i+1:k}]_{\rm Lip}  \left[ \sum_{\ell=0}^i  \alpha_{p,\ell} \,\beta_{p,\ell:i}  \right] ^{\frac 1p} \, N_{i}^{-\frac 1d}.
\end{equation*}
This suggests  that when the dimension increases,  the recursive product quantization introduces the additional  factor $ d^{^{(-1 + p/ 2) (k +  1/ p )} } $ 
 with respect to the regular recursive quantization method.   
\end{rem}

\begin{proof}  First,  we have to keep in mind that in this framework, the whole  vector $\widehat X_k$ is no longer stationary but each of its components still be stationary. 

Recall from \eqref{eq:errorbound} that we have:
\begin{equation*}
\big\|\bar X_k-\widehat X_k\big\|_{_2} \le \sum_{i=0}^k   [F_{i:k}]_{\rm Lip} \big\|\widehat X_{i}-\widetilde X_{i}\big\|_{_2}.
\end{equation*}
Now, using Lemma \ref{PierceExt} (the revisited Pierce's lemma) with $r=2$, yields
\begin{eqnarray} \label{EqStep1}
\big\|\bar X_k-\widehat X_k\big\|_{_2} \le  C_{p} \,d^{^{\frac{p-2}{2p}}}  \sum_{i=0}^k   [F_{i:k}]_{\rm Lip}     \big \Vert  \widetilde X_i \big\Vert_{p}  N_i^{-\frac{1}{d}} . 
\end{eqnarray}
On the other hand,  owing to Jensen's Inequality and to the stationary property (see e.g.~\cite{GraLus} for further details on the stationary property)  which states in particular that $\E\, \big(\widetilde X_{k+1}^{\ell}\,|\, \widehat X_{k+1}^{\ell}\big) = \widehat X_{k+1}^{\ell}$ satisfied by each $\widehat X_{k+1}^{\ell}$  since each quantization of the marginal $\widetilde  X_{k+1}^{\ell}$ is $L^2$-optimal,  we have
for any $k \in \{0, \ldots, n\}$,
\begin{eqnarray*}
\E\, \vert \widehat X_{k} \vert^p 
 = \mathbb E \Big(\sum_{\ell=1}^d \vert  \widehat X_{k} ^{\ell} \vert^2 \Big)^{p/2} &\le&  d^{^{\frac{p}{2}-1}} \, \sum_{\ell=1}^d \E \,\big \vert \widehat X_{k}^{\ell} \big\vert ^{^p} \\
 &\le &   d^{^{\frac{p}{2}-1}} \, \sum_{\ell=1}^d  \, \E\, \big \vert \E\big(\widetilde X_{k}^{\ell}\, \vert\, \widehat X_{k}^{\ell}\big) \big \vert^{^p} \\
 & \le & d^{^{\frac{p}{2}-1}} \, \sum_{\ell=1}^d  \E\,| \widetilde X_{k}^{\ell}|^p \, = \, d^{^{\frac{p}{2}-1}} \, \mathbb E \Vert  \widetilde X_k \Vert_{_{\ell^p}}^{^p}. 
\end{eqnarray*}
Using the inequality  $\vert \cdot \vert_{_{\ell^{p}}} \le  \vert \cdot \vert_{_{\ell^{2}}}= \vert \cdot \vert$ yields 
\[
\E\, \vert \widehat X_{k} \vert^p  \le  d^{^{\frac{p}{2}-1}}  \mathbb E \vert  \widetilde X_k \vert^{^p}.
\]
 Now, using  Assumption~$({\rm SL})_p$ yields, for any $\ell \in \{1, \ldots,d\}$, 
\begin{eqnarray*}
\E \big|\widetilde X_{k+1} \big|^p =   \E\,\big( \E( |F_{k+1}(\widehat X_k, Z_{k+1})|^p\,|\,\widehat X_k )\big) &\le& \E\big(\alpha_{p,k+1}+\beta_{p,k+1}|\widehat X_k|^p\big)   \\
& \le & \alpha_{p,k+1}+\beta_{p,k+1} \, d^{\frac{p}{2}-1}  \E |\widetilde X_k|^p. 
\end{eqnarray*}
We deduce by a standard induction that for every $k \in \{0, \ldots, n\}$,
\begin{equation}  \label{EqBoundXkpp}
\E\,\big| \widetilde X_{k}\big|^p\le \sum_{\ell=0}^k \alpha_{p,\ell}\,\beta_{p,\ell:k}  \,d^{^{(\frac{p}{2}-1)(k - \ell)}}. 
\end{equation}
We conclude by  replacing  $ \Vert  \widetilde X_i \Vert_{_{p}} $ in \eqref{EqStep1} by its value using \eqref{EqBoundXkpp}.
\end{proof}

\subsection{Toward time discretization schemes}
At this stage, having in mind time discretization schemes, one may try to control all these bounds as a function of $n$ and of the total quantization budget $N= N_0+\cdots+N_n$. In that spirit we may assume that 
\begin{equation}
({\rm Step})\;\equiv\;\left\{\begin{array}{ll}
(i) & \forall\, k\!\in \{0,\ldots,n\},\, \;[F_k]_{\rm Lip} \le 1+ C_0\frac{T}{n},\\ &\\
(ii) &\, \forall\, k\!\in \{0,\ldots,n\},\;\alpha_{p,k}\le C_1\frac{T}{n}\; \mbox{ and }\; \; \beta_{p,k} \le 1+ C_2\frac{T}{n}.
\end{array}\right.
\end{equation}

Usually in a time discretization framework discrete time instants $k$ stand for absolute time $t_k= \frac {kT}{n}$ so that, under the above asumption,
\[
  [F_{\ell:k}]_{\rm Lip} \le \Big(1+ C_0\frac{T}{n}\Big)^{t_k-t_\ell}\le e^{C_0(t_k-t_\ell)}\quad\mbox{  and}\quad \beta_{p,\ell:k}\le e^{C_2(t_k-t_\ell)}
\]
 so that 
 \begin{eqnarray*}
 \sum_{\ell=0}^k  \alpha_{p,\ell}\beta_{p,\ell:k}&\le &e^{C_1T}\big\| \bar X_0\big\|_{_p}+\frac{C_1T}{n} e^{C_2\frac Tn}\frac{e^{C_2t_k}-1}{e^{C_2\frac Tn}-1}\\
 &\le& e^{C_1T}\big\| \bar X_0\big\|_{_p}+\frac{C_1}{C_2} e^{C_2\frac Tn}(e^{C_2t_k}-1).
\end{eqnarray*}
Finally, for every $k=0:n$, 
\begin{equation}  \label{EqBoundStep}
\big\| \bar X_k-\widehat X_k\big\|_{_2} \le C_{d, \eta} \sum_{i=0}^k   e^{C_0(t_k-t_i)}  \left[ e^{C_1T}\big\| \bar X_0\big\|_{_p}+\frac{C_1}{C_2} e^{C_2\frac Tn}(e^{C_2t_k}-1)\right] ^{\frac 1p} \, N_i^{-\frac 1d}.
\end{equation}

\subsection{Few examples of schemes} 
We now move towards the examples of schemes. Our aim in this step is to identify explicitly the coefficients $\alpha_{p,k}$, $\beta_{p,k}$ and the Lipschitz coefficients $[F_k]_{\rm Lip}$ for each given scheme. 

\subsubsection{Euler scheme (for both the regular and the product recursive quantization)} 
We consider here a one-dimensional Brownian diffusion with drift $b$ and diffusion coefficient $\sigma$ driven by a one-dimensional Brownian motion $W$  and its Euler scheme.  Let $b,\, \sigma:[0,T]\times \R^d\to \R^d$ be two continuous functions, Lipschitz continuous in $x$ uniformly in $t\!\in [0,T]$ so that there exists  real constant $C=C_{b,\sigma}>0$ such that
\begin{equation}  \label{EqEulerLip}
\forall\, t\!\in [0,T], \; \forall\, x\!\in \R^d,\quad \max\big(|b(t,x)|,|\sigma(t,x)|\big) \le C(1+|x|).
\end{equation}

We denote by $[b]_{\rm Lip} $ and $[\sigma]_{\rm Lip}$ the Lipschitz conefficients (in sspace) of $b$ and $\sigma$ respectively. Let $h =\frac Tn>0$, and $t_k= t^n_k = \frac {kT}{n}$, $k\!\in [\![0,n]\!]$. The discrete time Euler scheme reads
\[
\bar X_{t_{k}}= F_{k}(\bar X_{t_{k-1}},Z_k) , \; k=1:n
\]
where $(Z_k)_{k=1:n}$ are i.i.d., ${\cal N}(0;1)$-distributed and
\begin{eqnarray*}
F_{k}(x,Z) & = & x+ b(t_{k-1}, x) h + \sigma(t_{k-1},x)\sqrt{h} Z \\
\,[F]_{\rm Lip}& \le &  \left(\big(1+h[b(t_{k-1},.]_{\rm Lip}\big)^2 + h[\sigma(t_{k-1},.)]^2_{\rm Lip} \right)^{\frac 12}\\
&\le& 1+C_1(T) h
   \end{eqnarray*}
 (with $C_1 =  [b]_{\rm Lip} + \frac 12 [\sigma]^2_{\rm Lip}\, $)   and $F_k(x,Z)$ can be decomposed into $F_k(x,Z)= a+ \sqrt{h}A\,Z$ with
 $a= x+ b(t_{k-1}, x) h$ and $A= \sigma(t_{k-1},.)$
so that
\[
|a|\le |x|(1+Ch)+Ch\quad \mbox{ and }\quad \Vert A \Vert^p= \Vert \sigma(t_{k-1}, x) \Vert^p \le   2^{p-1}C^p(1+|x|^p).
\]

Applying the above Lemma~\ref{lem:key}$(b)$  with $\Upsilon= L=C$,    $\zeta=Z$, yields, that one may set, for every $k\!\in \{1,\ldots,n\}$,
\[
\alpha_{p,k}= \big(  e^{\kappa_p  h } L + K_p \big) h    \quad \mbox{ and }\quad \beta_{p,k}= 1+(\kappa_pe^{\kappa_ph}+K_p)h
\]
where we used that
$e^{ \kappa_p h}  + K_p h \le 1+(\kappa_pe^{\kappa_ph}+K_p)h$ since $e^x\le 1+x e^x$.

\begin{rem}  \label{RemForFioCal} As pointed out in the introduction, this bound include the procedure used in~\cite{CalFioGra2} to quantize the couple price-volatility
 process. In fact, the Euler scheme associated to the volatility process evolves following a Markov chain $X_k^1 = F_k^1(X_{k-1}^1, Z_k)$ whereas the dynamics of Euler scheme associated to the price process is given by   $X_k^2 = F_k^2\big((X_{k-1}^1,X_{k-1}^2), Z_k\big)$, where $(Z_k)$ is a ${\cal N}(0, I_2)$ iid sequence of random variables. As a consequence, setting $X=(X^1,X^2)$, we may write down $X_k = (X_k^1,X_k^2) = F_k(X_{k-1},Z_k)$ where for any $x = (x_1,x_2), z  \in \mathbb R^2$, $F_k(x,z) =  \begin{pmatrix} F_k^1(x_1,z ) \\  F_k^2(x, z) \end{pmatrix}$.
\end{rem}

\subsubsection{Euler scheme of a diffusion with jumps}  \label{SecJump1}
 We start from the following Euler scheme for the jump diffusion \eqref{EqJumpDif}:
\[
\bar X_{k+1}= \bar X_k +h \, b(t_k, \bar X_k) + \sqrt{h}\,\sigma(t_k, \bar X_{t_k})Z_{k+1}+ \gamma(\bar X_k) \big(\tilde \Lambda_{(k+1)h}-\tilde \Lambda_{kh}\big)
\]
where $(\tilde \Lambda_t)_{t\in [0,T]}$ is a compensated Poisson process defined by
\[
\tilde \Lambda_t = \sum_{k=1}^{\Lambda_t}U_k -\lambda\, t \,\E\, U_1 ,
\; t\!\in [0,T],
\]
where $(\Lambda_t)_{t\ge 0}$ is  a standard Poisson process with intensity $\lambda>0$, $(U_k)_{k\ge 1}$  is   i.i.d. sequence of independent    square integrable  random variables,  both are independent and independent
 of the Gaussian white noise $(Z_k)_{k\ge 1}$.

 We assume $\gamma$ is Lipschitz continuous and  $b$, $\sigma$: $[0,T]\times \R\to \R$ are continuous and Lipschitz continuous in $x$, uniformly in $t\!\in [0,T]$. In particular, let $C=C_{b,\sigma, \gamma}>0$ such that
 \[
 \forall\, t\in [0,T],\; \forall\, x\!\in \R,\quad\max\big(|b(t,x)|, |\sigma(t,x)|, |\gamma(x)|\big)\le C(1+|x|).
 \]

Note that, as a classical consequence of Burkholder-Davis-Gundy Inequality, if $U_1\!\in L^p$ for some $p\!\in [1,+\infty)$, every $t\!\in [0,T]$,
$$
\E \, |\tilde \Lambda_{t+s}-\tilde \Lambda_t|^p\le c_p\,(\lambda s)^{\frac p2}\E\, |U_1|^{p}
$$
where $c_p$ is a positive universal constant, only depending on $p$ ($c_2=1$).  Then, one shows that
\begin{eqnarray*}
F_{k}(x,Z) & = & x+ b(t_{k-1}, x) h + \sqrt{h}\Big(\sigma(t_{k-1},x) Z+ \sqrt{\lambda}\|U_1\|_{_2}\gamma(x)\frac{\tilde \Lambda_{h}}{\sqrt{\lambda h}\|U_1\|_{_2}}\Big)  \\
\mbox{with }\quad [F]_{\rm Lip}& \le &  \left(\big(1+h[b(t_{k-1},.]_{\rm Lip}\big)^2 + h\big([\sigma(t_{t_{k-1}}.)]^2_{\rm Lip} + \lambda\,\E\, U_1^2 [\gamma(.)]^2_{\rm Lip} \big)\right)^{\frac 12}\\
&\le& 1+C_1 h.
\end{eqnarray*}
with 
$$
C_1 =   [b]_{\rm Lip} + \frac 12  \big([\sigma(]^2_{\rm Lip} +\lambda \E\, U_1^2 [\gamma]^2_{\rm Lip}\big).
$$ 
Moreover, $F_k(x,Z)$ can also be decomposed into $F_k(x,Z)= a+ \sqrt{h}\,A\,Z$ with
 $a= x+ b(t_{k-1}, x) h$,
 $$
 A=\Big[ \sigma(t_{k-1},.) \quad \gamma(t_k,.)\|U_1\|_{_p}\Big]\quad \mbox{ and }\quad \zeta = \zeta_h = \left[\begin{array}{c} Z\\ \frac{\tilde \Lambda_{h}}{\sqrt{\lambda h} \|U_1\|_{_p}}\end{array}\right]
 $$
 with $d=1$, $q=2$ so that
\begin{eqnarray*}
&&|a|\le |x|(1+Ch)+Ch \\
&\mbox{ and } \quad & \|A\|^p=\Big (\sigma^2(t_{k-1},x)+\lambda \|U_1\|_{_p}^2 \gamma^2(t_{k-1},x)\Big)^{\frac p2} \le   2^{p-1}(1+\lambda  \|U_1\|_{_p}^2)^{\frac p2}C^p(1+|x|^p)
\end{eqnarray*}
and, for every $p\!\in [2,3)$,
\[
\E \,|\zeta|^p = \E\left[Z^2 + \left(\frac{\tilde \Lambda_{h}}{\sqrt{\lambda h} \|U_1\|_{_p}}\right)^2\right]^{\frac p2}\le 2^{\frac p2-1}\left(\E|Z|^p + \E \left|\frac{\tilde \Lambda_{h}}{\sqrt{\lambda h} \|U_1\|_{_p}}\right|^p \right)  \le \tilde c_p = 2^{\frac p2-1} \left(\E |Z|^p + c_p \right).
\]

We may apply  the above Lemma~\ref{lem:key}$(b)$  with $L=C$,  $\Upsilon = (1+\lambda  \|U_1\|^2_{_p})^{\frac 12}C$. Denoting by $\widetilde K_p$ the constant $K_p$ where   $\E\,|\zeta|^p$ is replaced by $\tilde c_p$. This lemma allows us to  set, for every $k\!\in \{1,\ldots,n\}$,
\[
\alpha_{p,k}= \big(  e^{\kappa_p  h } L + \widetilde K_p \big)h    \quad \mbox{ and }\quad \beta_{p,k}= 1+(\kappa_pe^{\kappa_ph}+K_p)h
\]
where we used that
$e^{ \kappa_p h}  + \widetilde K_p h \le 1+(\kappa_pe^{\kappa_ph}+\widetilde K_p)h$ since $e^x\le 1+x e^x$.

\subsubsection{Milstein scheme}
%
%
%
%
%

Assume  $b$, $\sigma$ are ${\cal C}_b^{2}$ [voir dans poly , les conditions exactes] ($i.e.$ $b'_x$ and $\sigma'_x$ bounded and $\sigma\sigma'_x$ Lipschitz continuous in $x$, uniformly in $t\!\in [0,T]$).  We will focus on the one-dimensional Misltein scheme for which we have closed form allowing a fast recursive quantization procedure (see~\cite{McWalter}). 
\[
\bar X_{t_{k}}= F_{k}(\bar X_{t_{k-1}},Z_k) , \; k=1:n
\]
where
\[
F_{k}(x,Z) = x+ b(t_{k-1}, x) h + \sigma(t_{k-1},x)\sqrt{h} Z + \frac h2 \sigma\sigma_x'(t_{k-1},x)(Z^2-1).
\]

Elementary computations show that that $[F_k]_{\rm Lip}$ can be taken as
   \begin{eqnarray*}
[F_k]_{\rm Lip}&\le&  \left(\big(1+h[b(t_{k-1},.)]_{\rm Lip}\big)^2 + h[\sigma(t_{k-1},.)]^2_{\rm Lip} + \frac{h^2}{2}[\sigma\sigma_x'(t_{k-1},.)]^2_{\rm Lip}  \right)^{\frac 12}\\
&\le& 1+C_1(T) h
   \end{eqnarray*}
with $$
C_1  = \sup_{k=0,\ldots,n-1}[b(t_{k-}, .)]_{\rm Lip} + \frac 12 \sup_{k=0,\ldots,n-1}\big([\sigma(t_{k-1}, .)]^2_{\rm Lip} +T \sup_{k=0,\ldots,n-1}\big([\sigma\sigma'_ct_{k-1}, .)]^2_{\rm Lip}\big).
$$ 
since $\E\, (Z^2-1)^2 =  2$.

\bigskip
One still has $a=  x+ b(t_{k-1}, x) h $ but now, with $d=1$ and $q=2$,
$$
A= \displaystyle \Big [\sigma(t_{k-1},.)\;\;  \frac{\sqrt{h}}{2}\sigma\sigma'(t_{k-1},.) \Big]\quad \mbox{ and }\quad \zeta = \displaystyle \left[\begin{array}{c}Z\\Z^2-1\end{array}\right]
$$
so that
\[
|a|\le |x|(1+Ch)+Ch\quad \mbox{ and }\quad \|A\|^p=|\sigma_k(x)|^p\Big(1+\frac h4 (\sigma'_k(x))^2\Big)^{\frac p2}\le 2^{p-1}C^p\Big(1+\frac h4[\sigma]_{\rm Lip}^2\Big)^{\frac p2}.
\]

Set  $L= C$ and $\Upsilon =  \Big(1+\frac T4[\sigma]_{\rm Lip}^2\Big)^{\frac 12}$. Applying Lemma~\ref{lem:key} yields again
\[
\alpha_{p,k}= \big(  e^{\kappa_p  h } L + K_p \big) h    \quad \mbox{ and }\quad \beta_{p,k}= 1+(\kappa_pe^{\kappa_ph}+K_p)h.
\]
As for the implementation of the quantization optimization procedure developed, it is proposed in~\cite{McWalter} to re-write $F_k$ as
\[
F_k(x,z)= x+ b(t_{k-1}, x) h-\frac12\Big( \frac{\sigma}{\sigma'_x}(t_{k-1},x)-h\sigma\sigma'_x(t_{k-1},x))\Big) +\frac{h}{2}\Big(z+\frac{1}{\sigma'_x(t_{k-1},,x)\sqrt{h}}\Big)^2
\]
so that the fast recursive quantization procedure need for that scheme to have analytical formulas  for the c.d.f and the partial first moment functions of {\em uncentered $\chi^2$-distributions} of the form $(Z+c)^2$, $Z\sim{\cal N}(0,1)$, for which closed forms are available. We refer to~\cite{McWalter} for details.

\subsubsection{Simplified weak order 2.0 Taylor scheme}
 This higher order scheme was introduced by~\cite{KloPla} and has been recursively quantized in~\cite{McWalter}. In a $1$-dimensional setting it can be written in an elementary form (without iterated stochastic integrals). To alleviate notations we will assume that the  drift $b$ and the  volatility coefficient $\sigma$ are homogeneous in time $i.e.$ $b(t,x)= b(x)$ and $\sigma(t,x)=\sigma(x)$, both functions being assumed to be twice differentiable. Then it reads, $h=\frac Tn $ still denoting the step of the scheme,
 \[
  \bar X_{t_k}= F_k(\bar X_{t_{k-1}},Z_{k})
\]
where
\[
F(x,z)= x+B_h(x) + C_h(x) z  + D_h(x) (z^2-1)
\]
with
  \begin{align*}
  B_h(x)&= b(x)h +\frac 12 \,\widetilde b(x) h^2\quad\mbox{ with} \quad \widetilde b(x) = bb'(x) +\frac 12 \,b''(x)\sigma^2(x) \\
  C_h(x) &=  \sigma(x)\sqrt{h}  +\frac12\, \widetilde \sigma(x)h^{\frac32} \quad\mbox{ with} \quad \widetilde \sigma(x) =  (b\sigma)'(x)+\frac12 \sigma''(x)\sigma^2(x) \\
  D_h(x) &= \frac 12 \sigma\sigma'(x) h.
  \end{align*}

  In view of the  implementation of the scheme we may mimick  the above square completion with  this formula to make appear again an uncentered $\chi^2$-distribution:
  \[
  f(x,z)= B_h(x) -D_h(x)-\frac{C^2_h(x)}{4D_h(x)}+D_h(x)\left(z-\frac{C_h(x)}{2D_h(x)}\right)^2.
  \]
First we note that under the assumptions made on $b$ and $\sigma$ one easily checks that $b$, $\sigma$, $\widetilde b$, $(\sigma\sigma')^2$ and $\widetilde \sigma$ are all Lipschitz continuous and bounded. As a consequence, one easily checks that
  \[
  [F]_{\rm Lip}\le \left(\Big(1+h[b]_{\rm Lip} +\frac{h^2}{2}[\widetilde b]_{\rm Lip}\Big)^2+ h\Big([\sigma]_{\rm Lip}+\frac{h}{2}[\widetilde \sigma]_{\rm Lip}\Big)^2 + \frac{h^2}{2} [(\sigma\sigma')^2]_{\rm Lip}\right)^{\frac 12}.
  \]

  Then we set similarly to former examples
  \[
  a= x+hb(x) +h^2\,\widetilde b(x)
  \]
  and
 \[
  A= \displaystyle \Big [\sigma(x)+\frac12 \widetilde\sigma(x) h\;\;  \frac{\sqrt{h}}{2}\sigma\sigma'(x) \Big]\quad \mbox{ and }\quad \zeta = \displaystyle \left[\begin{array}{c}Z\\Z^2-1\end{array}\right]
\]
so that, once noted that $0<h\le T$
\[
|a|\le |x| +h\big(\|b\|_{\sup} +T\|\widetilde b\|_{\sup}\big)
\]
and
\[
\|A\|= \left(\Big(\sigma(x)+\frac h2\widetilde \sigma(x)\Big)^2 + \frac{h}{4} (\sigma\sigma')^2(x) \right)^{\frac12}\le C_{T,\sigma,b}(1+h|x|).
\]
Then we may apply Lemma~\ref{lem:key}. $(a)$, with $L= \|b\|_{\sup} +T\|\widetilde b\|_{\sup}$, and $\Upsilon = C_{T, \sigma, b}$,  so that 
\[
\alpha_{p, k} =  \big(  e^{\kappa_p  h } L + K_p \big) h \quad \mbox{ and } \quad \beta_{p,k} =  e^{ \kappa_p h}  + K_p \,h^p.
\]

\section{Weak error rate for recursive quantization}  \label{SectionWeakError}
\subsection{A general weak error rate for smooth functions}
\begin{prop}\label{pro:weakerror}
$(a)$ Let $(\bar X_k)_{k=0:n}$ be an homogeneous Markov chain defined by~\eqref{eq:Markov} with transition kernel $P(x,dy)$. 
Assume that at every instant $k\!\in [\![0,n]\!]$, $\widehat X_k={\rm Proj}_{\Gamma_k}(\widetilde X_k)$  where $\Gamma_k$  is a stationary quantizer. 
Let $\mathcal{V}\subset\mathcal{C}^1(\R^d,\R)$ be a vector subspace satisfying:
\[
\forall\, f\!\in {\cal V},\quad [\nabla f]_{\rm Lip}<+\infty\quad\mbox{ and }\quad P\big(\mathcal{V}\big)\subset \mathcal{V}.
\]
%
%
Then, for every $f\!\in \mathcal{V}$ and every $k\!\in [\![0,n]\!]$,
\[
\big|\E\, f(\widehat X_k)-\E\,f(\bar X_k)\big|\le \frac{1}{2} \sum_{\ell=0}^k[\nabla P^{k-\ell} f]_{\rm Lip}\big\|\widehat X_\ell-\widetilde X_\ell\big\|^2_2.
\]

\noindent $(b)$ If there exists $h>0$ such that $\forall\, f \!\in {\cal V},\; \exists \,C,\,C'>0$ such that  
\begin{equation}\label{eq:majorgrad}
[\nabla P f]_{\rm Lip} \le  e^{Ch} [\nabla f]_{\rm Lip}+C'[f]_{\rm Lip}h,
\end{equation}
 then 
\[
\forall\, k\!\in [\![0,n]\!],\quad \big|\E\, f(\widehat X_k)-\E\,f(\bar X_k)\big|\le \frac{1}{2} \sum_{\ell=0}^k\Big([\nabla f]_{\rm Lip}e^{C(k-\ell)h}+C'[f]_{\rm Lip} t_k\Big)\big\|\widehat X_\ell-\widetilde X_\ell\big\|^2_2.
\]
\end{prop}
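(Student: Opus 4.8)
The plan is to prove $(a)$ first by a telescoping-sum argument comparing $f(\bar X_k)$ and $f(\widehat X_k)$ one time step at a time, and then deduce $(b)$ by iterating the gradient-regularity hypothesis \eqref{eq:majorgrad}. For $(a)$, I would write the error as a telescoping sum along the two chains: setting $\Phi_\ell = P^{k-\ell}f$, I use that $\bar X$ and $\widetilde X$ have the \emph{same} transition $P$ applied to $\bar X_{\ell-1}$ resp. $\widehat X_{\ell-1}$, and the stationarity $\E\big(\widetilde X_\ell \mid \widehat X_\ell\big)=\widehat X_\ell$. The key identity is
\[
\E\, f(\widehat X_k) - \E\, f(\bar X_k) = \sum_{\ell=0}^k \Big( \E\, \Phi_\ell(\widehat X_\ell) - \E\, \Phi_\ell(\widetilde X_\ell)\Big),
\]
where I would justify the telescoping by noting $\E\,\Phi_\ell(\widetilde X_\ell) = \E\, (P^{k-\ell}f)(F_\ell(\widehat X_{\ell-1},Z_\ell)) = \E\, \Phi_{\ell-1}(\widehat X_{\ell-1})$ (using the Markov property and $P\Phi_\ell = \Phi_{\ell-1}$), while $\E\,\Phi_k(\widehat X_k)=\E\,f(\widehat X_k)$ and the ``$\bar X$-end'' of the telescope is $\E\,\Phi_0(\bar X_0)=\E\,f(\bar X_k)$ since $\bar X_0 = \widehat X_0$ (or more carefully, handling the $\ell=0$ term where $\widehat X_0 = \widetilde X_0$ so that summand vanishes if $\Gamma_0=\{x_0\}$, otherwise it is kept).

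The heart of the estimate is then bounding each summand $\big|\E\,\Phi_\ell(\widehat X_\ell) - \E\,\Phi_\ell(\widetilde X_\ell)\big|$. Here I would use the stationarity of $\Gamma_\ell$: since $\widehat X_\ell$ is the conditional expectation of $\widetilde X_\ell$ given $\widehat X_\ell$, a second-order Taylor expansion of $\Phi_\ell$ around $\widehat X_\ell$ gives
\[
\E\,\Phi_\ell(\widetilde X_\ell) - \E\,\Phi_\ell(\widehat X_\ell) = \E\Big[\Phi_\ell(\widetilde X_\ell) - \Phi_\ell(\widehat X_\ell) - \big(\nabla\Phi_\ell(\widehat X_\ell)\,\big|\,\widetilde X_\ell - \widehat X_\ell\big)\Big],
\]
because the linear term vanishes after conditioning on $\widehat X_\ell$. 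Since $\Phi_\ell = P^{k-\ell}f\in\mathcal V$ has $[\nabla\Phi_\ell]_{\rm Lip}<+\infty$, the integrand is bounded in absolute value by $\tfrac12[\nabla P^{k-\ell}f]_{\rm Lip}\,|\widetilde X_\ell - \widehat X_\ell|^2$, and taking expectations yields $\tfrac12[\nabla P^{k-\ell}f]_{\rm Lip}\,\|\widehat X_\ell - \widetilde X_\ell\|_2^2$. Summing over $\ell$ gives $(a)$.

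For part $(b)$, I would simply iterate \eqref{eq:majorgrad}: applying it $j$ times and using that $[P^{i}f]_{\rm Lip}\le [f]_{\rm Lip}$ (non-expansivity of $P$ in Lipschitz seminorm, which follows from the same Lipschitz hypothesis), one gets by induction
\[
[\nabla P^{j}f]_{\rm Lip} \le e^{Cjh}[\nabla f]_{\rm Lip} + C'[f]_{\rm Lip}h\sum_{i=0}^{j-1}e^{Cih}\le e^{Cjh}[\nabla f]_{\rm Lip} + C'[f]_{\rm Lip}\,jh\,e^{Cjh},
\]
and one may further crudely bound $jh\,e^{Cjh}\le t_k e^{Ct_k}$ or, matching the statement, keep $jh = t_j\le t_k$ and absorb the exponential; plugging $j=k-\ell$ into $(a)$ gives the claimed bound (with the mild simplification $e^{C(k-\ell)h}\le$ the stated form after reindexing, or keeping the sharper exponential factor). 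The main obstacle is the careful bookkeeping of the telescoping sum — in particular making sure the Markov property is invoked correctly so that $\E\,\Phi_\ell(\widetilde X_\ell)=\E\,\Phi_{\ell-1}(\widehat X_{\ell-1})$, and handling the boundary terms $\ell=0$ and $\ell=k$ — rather than any single hard estimate; the per-step bound itself is a routine stationarity-plus-Taylor argument.
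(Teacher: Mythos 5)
Your proof of part $(a)$ is correct and is essentially the paper's argument written out explicitly: the paper proceeds by backward induction on the inequality
\[
\big|\E\, g(\widehat X_\ell)-\E\,g(\bar X_\ell)\big| \le \tfrac{[\nabla g]_{\rm Lip}}{2}\big\|\widehat X_\ell-\widetilde X_\ell\big\|_2^2 + \big|\E\, Pg(\widehat X_{\ell-1})-\E\,Pg(\bar X_{\ell-1})\big|,
\]
and unrolling that induction gives precisely your telescoping identity with $\Phi_\ell=P^{k-\ell}f$; the per-step bound is the same stationarity-plus-Taylor argument, which the paper invokes directly as a known consequence of stationarity while you spell it out. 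So there is no substantive difference in $(a)$.

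For part $(b)$ there is one point worth flagging. You invoke ``non-expansivity'' $[P^i f]_{\rm Lip}\le [f]_{\rm Lip}$ and claim it ``follows from the same Lipschitz hypothesis,'' but hypothesis~\eqref{eq:majorgrad} controls only $[\nabla P f]_{\rm Lip}$, not $[Pf]_{\rm Lip}$, and the general $({\rm Lip})$ assumption only gives $[Pf]_{\rm Lip}\le [F]_{\rm Lip}\,[f]_{\rm Lip}$ with $[F]_{\rm Lip}\ge 1$ in all the schemes considered. What is actually available (under $({\rm Step})$ or the specific scheme estimates of Section~\ref{SectionWeakError}, e.g.\ $[Pf]_{\rm Lip}\le(1+Ch)[f]_{\rm Lip}$) is a bound $[P^i f]_{\rm Lip}\le e^{C''ih}[f]_{\rm Lip}$, after which your iteration gives
\[
[\nabla P^{j} f]_{\rm Lip}\le e^{Cjh}[\nabla f]_{\rm Lip}+C'[f]_{\rm Lip}\,h\sum_{i=0}^{j-1}e^{Cih}e^{C''(j-1-i)h},
\]
and the stated bound follows by absorbing the bounded exponential $e^{\max(C,C'')T}$ into $C'$. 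This is a real (if benign) gap, but it is the same one glossed over in the paper's own one-line proof of~$(b)$, which simply asserts a forward induction based on~\eqref{eq:majorgrad} without bounding $[P^{j-1}f]_{\rm Lip}$.
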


\begin{rem} In the non-homogeneous case, one should simply replace  $ [\nabla P^{k-\ell} f]_{\rm Lip}$ by $ [\nabla P_0\cdots P_{k-1} f]_{\rm Lip}$ in Claim~$(a)$. Claim~$(b)$ remains true as set if we assume that for every $k\!\in [\![0,n-1]\!]$, $[\nabla P_k f]_{\rm Lip} \le  e^{Ch} [\nabla f]_{\rm Lip}+C'[f]_{\rm Lip}h$ where $C$  and $C'$ do not depend on $k$.

\end{rem}
\begin{proof}[${\it \textbf{Proof}}$]  As $[\nabla f]_{\rm Lip}<+\infty$, we know that
\[
\big|\E\, f(\widehat X_k)-\E\,f(\bar X_k)\big|\le \big|\E\, f(\widehat X_k)-\E\,f(\widetilde X_k)\big|+\big|\E\, f(\widetilde X_k)-\E\,f(\bar X_k)\big|.
\]
The quantization $\widehat X_k$ of $\widetilde X_k$ being optimal, $\widehat X_k$ is stationary so that
\[
 \big|\E\, f(\widehat X_k)-\E\,f(\widetilde X_k)\big| \le \frac{[\nabla f]_{\rm Lip}}{2}\big\|\widehat X_k-\widetilde X_k\big\|^2_2.
\]
Now, for every $g\!\in\mathcal{V}$, 
\begin{align*}
\big|\E\, g(\widehat X_\ell)-\E\,g(\bar X_\ell)\big|& \le \big|\E\, g(\widehat X_\ell)-\E\,g(\widetilde X_\ell)\big|+\big|\E\, g(\widetilde X_\ell)-\E\,g(\bar X_\ell)\big|\\
								&\le \frac{[\nabla g]_{\rm Lip}}{2} \big\|\widehat X_{\ell}- \widetilde X_{\ell}\big\|_2^2+\big|\E\, g(\widetilde X_\ell)-\E\,g(\bar X_\ell)\big|\\
								&=   \frac{[\nabla g]_{\rm Lip}}{2} \big\|\widehat X_{\ell}- \widetilde X_{\ell}\big\|_2^2+ \big|\E\, Pg(\widehat X_{\ell-1})-\E\,Pg(\bar X_{\ell-1})\big|.
\end{align*}
As $P^\ell f\!\in \mathcal{V}$ for every $\ell\ge 0$ owing to $(i)$,  we derive  by an easy backward induction that
\[
\big|\E\, f(\widehat X_k)-\E\,f(\bar X_k\big|\le \frac{1}{2} \sum_{\ell=0}^k[\nabla P^{k-\ell} f]_{\rm Lip}\big\|\widehat X_\ell-\widetilde X_\ell\big\|^2_2.
\]

\noindent $(b)$ Now it is clear by a forward induction based on~\eqref{eq:majorgrad}  that $[\nabla P^{\ell} f]_{\rm Lip}\le e^{C_f\ell h}$. This completes the proof.
\end{proof}

The key assumption is the stationarity of successive the quantization grids. This is the case when the quanization grids are optimal or when, dealing with product quantization, when a product grid is made  of optimal scalar grids on each marginal suppsoed to be mutually independent.

As far as recursive  product quantization is concerned, this is always the case on a dimension ($d=1$) but turns out to be a rather restrictive condition in higher dimension. It implies in a diffusion fremawork that all the components are independent.
\subsection{Some applications}

\subsubsection{Euler scheme of a Brownian diffusion}  We will consider for the sake of simplicity only the autonomous Euler scheme with step $h=T/n$, still defined by~\eqref{EqEuler} but with $b(t,x) = b(x) $ and $\sigma(t,x)= \sigma (x)$ so that it makes up an $\R^d$-valued  homogeneous Markov chain with transition $P(x, dy)$ defined by
\[
Pf(x) = \E \,f\big({\cal E}_h(x,Z)\big) \quad with \quad \mathcal{E}_h(x,z) = x+h\, b(x) +\sqrt{h}\,\sigma(x)z, \; z\!\in \R^q
\]
and $Z\sim{\cal N}(0,I_q)$. 
\begin{prop}[Euler scheme]\label{prop:Euler}$(a)$ If $b$, and $\sigma$ are twice times differentiable with $Db$, $D^2b$, $D\sigma$ and (all  matrices) $(\partial_{x_{i},x_{j}}^2\sigma) \sigma^*$, $i,j\!\in [\![1,d]\!]$,  are bounded and if $f:\R^d\to \R$ is twice differentiable with a Lipschitz gradient, then there exists a real constant $C= C_{Db,D^2b,D\sigma,(D^2\sigma) \sigma^*}>0$, not depending on $h$, such that   
\[
[Pf]_{\rm Lip} \le (1+Ch) [f]_{\rm Lip}\quad \mbox{ and }\quad [\nabla Pf]_{\rm Lip} \le (1+Ch)[\nabla f]_{\rm Lip} +h\|D^2b\|_{\sup} \|f\|_{\sup}.
\]

\noindent $(b)$ As a consequence, for every $k\in \![\![0,n]\!]$
\[
 [\nabla P^kf]_{\rm Lip}\le e^{Ct_k}\big([\nabla f]_{\rm Lip}+t_k \big)\|D^2b\|_{\sup} \|\nabla f\|_{\sup}.
\] 
\end{prop}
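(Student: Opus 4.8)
The plan is to establish Proposition~\ref{prop:Euler}$(a)$ by direct differentiation under the expectation sign in $Pf(x) = \E\,f\big(\mathcal{E}_h(x,Z)\big)$ with $\mathcal{E}_h(x,z) = x+h\,b(x)+\sqrt{h}\,\sigma(x)z$, and then to derive $(b)$ by a one-line forward induction. First I would treat the Lipschitz-of-gradient estimate for $[Pf]_{\rm Lip}$: since $\nabla(Pf)(x) = \E\big[\big(I + h\,Db(x) + \sqrt{h}\,(D\sigma(x)\!\cdot\! Z)\big)^{\!*}\nabla f(\mathcal{E}_h(x,Z))\big]$, the Jacobian factor is $I$ plus terms of size $O(h)$ (the $\sqrt h$-term being centered and hence contributing only at order $h$ after one more differentiation), which yields $[Pf]_{\rm Lip}\le (1+Ch)[f]_{\rm Lip}$ with $C$ depending only on the stated sup-norms of $Db,D\sigma$ and on $[f]_{\rm Lip}$ bounds; this is the standard ``Lipschitz propagation under the Euler transition'' computation.

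The heart of the matter is the bound on $[\nabla Pf]_{\rm Lip}$. Here I would differentiate once more: $\partial_{x_i}\partial_{x_j}(Pf)$ splits into a term in which both derivatives hit $\nabla f\circ\mathcal{E}_h$ — producing $\E\big[(\partial_{x_i}\mathcal{E}_h)^*\,D^2f(\mathcal{E}_h)\,\partial_{x_j}\mathcal{E}_h\big]$, controlled by $(1+Ch)[\nabla f]_{\rm Lip}$ after collecting the $O(h)$ corrections — plus terms in which a derivative hits the Jacobian $\partial_{x_j}\mathcal{E}_h = e_j + h\,\partial_{x_j}b + \sqrt h\,\partial_{x_j}\sigma\cdot Z$. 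The dangerous pieces are $h\,(\partial^2_{x_ix_j}b)$ paired with $\nabla f(\mathcal{E}_h)$ and $\sqrt h\,(\partial^2_{x_ix_j}\sigma\cdot Z)$ paired with $\nabla f(\mathcal{E}_h)$; the first gives the announced $h\|D^2b\|_{\sup}\|\nabla f\|_{\sup}$ contribution, while the second must be handled by an integration-by-parts / Gaussian (Stein/Malliavin) identity — replacing the $Z$-factor by a derivative falling back on $f$ — so that one power of $\sqrt h$ is gained and the term becomes $O(h)$ times $[\nabla f]_{\rm Lip}$ (this is precisely where the hypothesis that $(\partial^2_{x_ix_j}\sigma)\sigma^*$ be bounded enters, since after the integration by parts the coefficient that appears is $(\partial^2_{x_ix_j}\sigma)\sigma^*$). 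Assembling all pieces gives $[\nabla Pf]_{\rm Lip}\le (1+Ch)[\nabla f]_{\rm Lip}+h\|D^2b\|_{\sup}\|\nabla f\|_{\sup}$, with $\|f\|_{\sup}$ replaced by $\|\nabla f\|_{\sup}$ in the statement once one tracks that only $\nabla f$ (not $f$ itself) appears after the first differentiation; I would note the stated inequality with $\|f\|_{\sup}$ is the conservative form and the sharper $\|\nabla f\|_{\sup}$ version is what the induction in $(b)$ actually uses.

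For part $(b)$ I would simply iterate: writing $a_\ell := [\nabla P^\ell f]_{\rm Lip}$, part $(a)$ applied to $P^{\ell}f$ (which is again twice differentiable with Lipschitz gradient, since $\mathcal V$ is $P$-stable) gives $a_{\ell+1}\le (1+Ch)a_\ell + h\|D^2b\|_{\sup}\|\nabla P^\ell f\|_{\sup}$, and since $\|\nabla P^\ell f\|_{\sup}\le\|\nabla f\|_{\sup}$ by the contraction property of Markov kernels applied componentwise to $\nabla(P^\ell f)=P^\ell(\ldots)$ — more precisely $\|\nabla P^\ell f\|_{\sup}\le(1+Ch)^\ell\|\nabla f\|_{\sup}\le e^{Ct_\ell}\|\nabla f\|_{\sup}$ using the $[P^\ell f]_{\rm Lip}$ bound — a discrete Gronwall summation over $\ell=0,\dots,k-1$ yields $a_k\le e^{Ck h}\big([\nabla f]_{\rm Lip}+kh\,\|D^2b\|_{\sup}\|\nabla f\|_{\sup}\big)=e^{Ct_k}\big([\nabla f]_{\rm Lip}+t_k\|D^2b\|_{\sup}\|\nabla f\|_{\sup}\big)$, which is the claim.

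The main obstacle is the $\sqrt h\,(D^2\sigma\cdot Z)$ term in the second differentiation: a naive bound would only give $\sqrt h\,[\nabla f]_{\rm Lip}$, which is fatal for the induction (it would not sum to a constant). The resolution — Gaussian integration by parts to transfer the $Z$ onto $f$ and thereby upgrade $\sqrt h$ to $h$, at the price of needing $(D^2\sigma)\sigma^*$ bounded rather than just $D^2\sigma$ — is exactly the structural reason the hypotheses of the proposition are phrased the way they are, and it is the one step that is not a routine product-rule calculation.
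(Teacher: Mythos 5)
Your proposal is correct and follows essentially the same route as the paper: compute $(Pf)''$ by direct differentiation under the expectation and handle the problematic $\sqrt{h}\,D^2\sigma\cdot Z$ term via Gaussian (Stein) integration by parts, which is precisely what makes the coefficient $(D^2\sigma)\sigma^*$ (rather than $D^2\sigma$ alone) appear and upgrades the $\sqrt{h}$ factor to $h$, with part $(b)$ following by a discrete Gronwall induction. You also correctly flag that the $\|f\|_{\sup}$ in the paper's statement of $(a)$ should read $\|\nabla f\|_{\sup}$ (equivalently $[f]_{\rm Lip}$), which is what the paper's own proof actually produces and what the induction in $(b)$ uses.
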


We will detail the proof in the case  $d=q=1$ and $b\equiv 0$ to avoid technicalities, keeping in mind that the computations that follow are close to those used to propagate regulariry when establishing the weak error expansion for the weak error of the Euler scheme of a diffusion.

\begin{proof}[${\it \textbf{Proof}}$] $(a)$ We will extensively use the following  two well-known facts :
\begin{align*}
\mbox{-- }\; [Pf]_{\rm Lip} &\le [f]_{\rm Lip} (1+C_{b,\sigma}h)\quad \mbox{(with $C_{b,\sigma}= [b]_{\rm Lip}Ê+[\sigma]_{\rm Lip}^2/2$)} \\
\mbox{-- }\;  \E \, g'(Z)& = \E\, g(Z)Z\quad \mbox{where $g:\R\to \R$ is differentiable,}\\
\nonumber &\hskip 4cm   \mbox{ $g$, $g'$ with polynomial growth}, \; Z\sim {\cal N}(0,1).
\end{align*}
The first inequality comes from~\eqref{EqEulerLip} and the second, known as Stein's identity, follows from a straightforward integration by parts.  Now, let $f$ is twice differentiable with bounded first two derivatives then
\begin{align*}
(Pf)''(x) &= \E\big[f''({\cal E}_h(x,Z) \big(1+hb'(x)+\sqrt{h}\sigma'(x) Z\big)^2\big]+\sqrt{h}\,\sigma''(x) \E\, \big[f'\big({\cal E}_h(x,Z)\big)Z\big]\\
&\qquad +hb''(x)  \E\, f'\big({\cal E}_h(x,Z) \big)   \\
& =  \E\big[f''({\cal E}_h(x,Z)  \big((1+ hb'(x) + \sqrt{h}\sigma'(x) Z)^2+ h\, \sigma\sigma''(x)\big) \big]+hb''(x)\E\, f'\big({\cal E}_h(x,Z) \big)
\end{align*}
where we used Stein's identity in the second equality  to the function $g(z) = f'\big({\cal E}_h(x,z)\big)$. As a consequence
\begin{align*}
\|(Pf)''\|_{\sup} &\le \| f''\|_{\sup}\sup_{x\in \R} \E\, \big[(1+hb'(x)+ \sqrt{h}\sigma'(x) Z)^2+ h\, |\sigma\sigma''(x)|\big]+ h\| f'\|_{\sup} \|b''\|_{\sup}\\
&\le \| f''\|_{\sup}\big(1+ C'_{b,\sigma}h \big)+ h\| f'\|_{\sup}\|b''\|_{\sup}
\end{align*}
where $C'_{b,\sigma}= 2\|b'\|_{\sup}+\|\sigma'\|^2_{\sup}+\|\sigma\sigma''\|_{\sup}+T \|b'\|^2_{\sup}$. 

Consequently, if we set $C=\max( C_{b,\sigma}, C'_{b,\sigma})$, then $[Pf]_{\rm Lip} \le (1+Ch) [f]_{\rm Lip}$ and 
\[
[(Pf)']_{\rm Lip} \le \|(Pf)''\|_{\sup}\le [f']_{\rm Lip}(1+ C h) + [f]_{\rm Lip}\|b''\|_{\sup}h 
\]
  since one clearly has $ \| f''\|_{\sup}\le  [f']_{\rm Lip}$ and $ \| f'\|_{\sup}\le  [f]_{\rm Lip}$. 

If $f'$ is only Lipschitz continuous, one proceeds by regularization: set $f_{\ve} (x)= \E \, f(x+\ve \zeta)$, $\zeta\sim {\cal N}(0,1)$. Then $f'_{\ve}(x) = \E\, f'(x+\ve \zeta)$ and $f_{\ve}''(x) = \frac{1}{\ve} \E\, [f'(x+\ve \zeta)\zeta]$ so that $\|f'_{\ve}-f'\|_{\sup} \le [f']_{\rm Lip} \ve$ and $[f'_{\ve}]_{\rm Lip} \le [f']_{\rm Lip}$.
Hence, one checks that 
$$
[(Pf_{\ve})']_{\rm Lip}\le  [f'_{\ve}](1+Ch) \le [f']_{\rm Lip}(1+Ch)
$$
and $(Pf_{\ve})' $ converges (unifromly on compact sets) toward $(Pf)'$ which finally implies $[(Pf)']_{\rm Lip}\le  [f']_{\rm Lip}(1+Ch)+ h[f]_{\rm Lip}\|b''\|_{\sup}$.

\smallskip
\noindent $(b)$ First one derives that $[P^kf]_{\rm Lip}Ê\le \|f\|_{\sup}(1+Ch)^k$, $k=0,\ldots,n$, so that 
\begin{align*}
[(P^kf)']_{\rm Lip}&\le  [(P^{k-1}f)']_{\rm Lip}(1+Ch)+ h[P^{k-1}f]_{\rm Lip}\|b''\|_{\sup}\\
&\le [(P^{k-1}f)']_{\rm Lip}(1+Ch)+ h\|b''\|_{\sup} \|f\|_{\rm Lip}(1+Ch)^{k-1}.
\end{align*}
A standard discret time Gronwall argument yields the announced result
\end{proof}
\noindent {\bf Remarks.} $\bullet$ When $d=q=1$ and $\sigma$ is convex, one shows that the above conditions on $\sigma$ can be slightly relaxed by assuming only that $\sigma\sigma'$ is Lipschitz continuous: this follows by an appropriate regularization of $\sigma$ once noted that, in the regular case $h(\sigma'(x))^2 + |\sigma\sigma''(x)|)= h (\sigma\sigma')'(x)$.

\smallskip
\noindent  $\bullet$ Under higher smoothness properties on $b$ and $\sigma$, a similar approach would yield a similar control for $[(Pf)']_{\rm Lip}$ when $P$ is the transition of the Milstein or Taylor 2.0 scheme of an autonomous Brownian diffusion. 

\subsubsection{Milstein scheme of a Brownian diffusion ($d=q=1$)} 
We will still focus on the one-dimensional Misltein scheme for which we have closed form allowing a fast recursive quantization procedure (see~\cite{McWalter}).
Let $h=\frac Tn$, $n\ge 1$. We recall that  the Milstein operator ${\cal M}_h$ of an autonomous Brownian diffusion  is defined in  a $1$-dimensional setting by 
\[
{\cal M}_h(x,z) = x +hb(x) +\sqrt{h}\sigma(x) z +\frac{\sigma\sigma'(x)}{2} (z^2-1), \qquad x,\, z\!\in \R.
\]
We will denote by ${\cal M}'_{h,x}$ and ${\cal M}'_{h,z}$ the partial derivatives of ${\cal M}_{h,x}$ w.r.t. the variables $x$ and $z$ respectively.
We set for convenience $\widetilde \sigma = \sigma \sigma'$ and, as before, we denote by $P$ the transition $Pf(x) = \E f\big({\cal M}_h(x,Z)\big)$, $Z\sim {\cal N}(0,1)$.
\begin{prop}[Milstein scheme]Assume $b= 0$ and $\sigma$ are three time differentiable with  $b'$, $b''$, $\sigma'$, $\sigma''$  and $\sigma\sigma''$ bounded then
\[
[(Pf)']_{\rm Lip}Ê\le (1+Ch)[f']_{\rm Lip}  +h\|b''\|_{\sup} [f]_{\rm Lip}
\]
where $C$ only depends on the sup norms of the above functions and $T$. In particular, the conclusion of item~$(b)$ of the former Proposition~\ref{prop:Euler} still holds.
\end{prop}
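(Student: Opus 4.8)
The plan is to reproduce, essentially line by line, the proof of Proposition~\ref{prop:Euler}, the only new feature being the second-order correction term of the Milstein transition. Here ${\cal M}_h(x,z)=x+hb(x)+\sqrt h\,\sigma(x)z+\frac h2\,\widetilde\sigma(x)(z^2-1)$ with $\widetilde\sigma:=\sigma\sigma'$ (the $(z^2-1)$-term carrying an explicit factor $h$, as in the Milstein scheme of Section~\ref{Section1}); it is exactly this factor $h$ that keeps ${\cal M}_h$ an $O(h)$-perturbation of the identity and makes the Euler argument go through. First, the one-step bound $[Pf]_{\rm Lip}\le(1+Ch)[f]_{\rm Lip}$ follows from $|Pf(x)-Pf(y)|\le[f]_{\rm Lip}\,\E\,\big|{\cal M}_h(x,Z)-{\cal M}_h(y,Z)\big|\le[f]_{\rm Lip}[{\cal M}_h(\cdot,Z)]_{\rm Lip}|x-y|$ together with $[{\cal M}_h(\cdot,Z)]_{\rm Lip}\le1+C_1h$, already established for the Milstein scheme. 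For the gradient estimate I would first assume $f\in{\cal C}^2$ with $f'$, $f''$ bounded, and remove this assumption at the very end by the Gaussian mollification $f_\ve(x)=\E\,f(x+\ve\zeta)$, $\zeta\sim{\cal N}(0,1)$, letting $\ve\downarrow0$, precisely as in the Euler proof.

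For such $f$, differentiating twice under the expectation sign gives, with $m={\cal M}_h(x,Z)$, $m_x=\partial_x{\cal M}_h=1+hb'+\sqrt h\,\sigma'Z+\frac h2\widetilde\sigma'(Z^2-1)$ and $m_{xx}=\partial_x^2{\cal M}_h=hb''+\sqrt h\,\sigma''Z+\frac h2\widetilde\sigma''(Z^2-1)$,
\[
(Pf)''(x)=\E\big[f''(m)\,m_x^2\big]+\E\big[f'(m)\,m_{xx}\big].
\]
Since $\E\,Z=\E(Z^2-1)=\E\,Z(Z^2-1)=0$, a direct expansion yields $\E\,m_x^2=(1+hb')^2+h(\sigma')^2+\frac{h^2}2(\widetilde\sigma')^2$, hence $\sup_x\E\,m_x^2\le1+Ch$ because $\widetilde\sigma'=(\sigma')^2+\sigma\sigma''$ is bounded; consequently $\big|\E[f''(m)m_x^2]\big|\le(1+Ch)\|f''\|_{\sup}=(1+Ch)[f']_{\rm Lip}$.

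It remains to control $\E[f'(m)m_{xx}]$, done exactly as in the Euler case. The piece $hb''(x)\,\E\,f'(m)$ is $\le h\|b''\|_{\sup}\|f'\|_{\sup}=h\|b''\|_{\sup}[f]_{\rm Lip}$; the piece $\frac h2\widetilde\sigma''(x)\,\E[f'(m)(Z^2-1)]$ already carries a factor $h$ and is $\le Ch[f]_{\rm Lip}$; and the only-$O(\sqrt h)$ piece $\sqrt h\,\sigma''(x)\,\E[f'(m)Z]$ is upgraded to $O(h)$ by Stein's identity $\E\,g(Z)Z=\E\,g'(Z)$ applied to $g(z)=f'({\cal M}_h(x,z))$, using $\partial_z{\cal M}_h=\sqrt h\,\sigma(x)+h\,\widetilde\sigma(x)Z$:
\[
\sqrt h\,\sigma''(x)\,\E[f'(m)Z]=h\,\sigma\sigma''(x)\,\E\,f''(m)+h^{3/2}\,(\sigma''\widetilde\sigma)(x)\,\E[f''(m)Z],
\]
both terms being $\le Ch\|f''\|_{\sup}=Ch[f']_{\rm Lip}$ since $\sigma\sigma''$ and $\sigma''\widetilde\sigma=(\sigma\sigma'')\sigma'$ are bounded. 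Summing up yields $\|(Pf)''\|_{\sup}\le(1+Ch)[f']_{\rm Lip}+h\|b''\|_{\sup}[f]_{\rm Lip}$, which is exactly $(a)$, with $C$ depending only on $T$ and the sup-norms of $b'$, $b''$, $\sigma'$, $\sigma''$, $\sigma\sigma''$, $(\sigma\sigma')'$, $(\sigma\sigma')''$. Claim~$(b)$ then follows from $(a)$ by the discrete Gronwall argument of Proposition~\ref{prop:Euler}$(b)$: $[Pf]_{\rm Lip}\le(1+Ch)[f]_{\rm Lip}$ gives $[P^kf]_{\rm Lip}\le e^{Ct_k}[f]_{\rm Lip}$, which fed into $[(P^kf)']_{\rm Lip}\le(1+Ch)[(P^{k-1}f)']_{\rm Lip}+h\|b''\|_{\sup}[P^{k-1}f]_{\rm Lip}$ gives the stated bound for $[\nabla P^kf]_{\rm Lip}$.

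The only genuinely delicate step is the treatment of $\E[f'(m)m_{xx}]$: one has to check that after the single Gaussian integration by parts every surviving term carries at least one factor $h$ \emph{and} that its coefficient --- now a product of derivatives of $\sigma$, typically $\sigma\sigma''$, $(\sigma\sigma'')\sigma'$, $(\sigma\sigma')''$ --- is bounded. This is where one needs marginally more than the hypotheses as literally stated (in effect $b,\sigma\in{\cal C}^3$ with $b'$, $b''$, $\sigma'$, $\sigma''$, $\sigma\sigma''$ and the first two derivatives of $\sigma\sigma'$ bounded), in line with the ``higher smoothness'' remark following the Euler proof; everything else is a transcription of that argument.
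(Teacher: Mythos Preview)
Your approach matches the paper's almost exactly: same second derivative decomposition $(Pf)''=\E[f''(m)m_x^2]+\E[f'(m)m_{xx}]$, same bound on $\E m_x^2$, same use of Stein's identity on the $\sqrt h\,\sigma'' Z$ piece. There is, however, one step where your argument is looser than the paper's and, as written, inconsistent with your own conclusion.

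You bound the term $\tfrac h2\,\widetilde\sigma''(x)\,\E[f'(m)(Z^2-1)]$ crudely by $Ch[f]_{\rm Lip}$, and then two lines later claim the final estimate $\|(Pf)''\|_{\sup}\le(1+Ch)[f']_{\rm Lip}+h\|b''\|_{\sup}[f]_{\rm Lip}$, in which the \emph{only} coefficient of $[f]_{\rm Lip}$ is $\|b''\|_{\sup}$. These do not match: your treatment of the $(Z^2-1)$ piece produces an extra $[f]_{\rm Lip}$ contribution depending on $(\sigma\sigma')''$, so your sum actually yields $(1+Ch)[f']_{\rm Lip}+h(\|b''\|_{\sup}+C_\sigma)[f]_{\rm Lip}$. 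That weaker inequality is still of the form~\eqref{eq:majorgrad} and suffices for the iterated bound in~$(b)$, but it is not the stated inequality; in particular, for $b\equiv 0$ the proposition asserts a pure $(1+Ch)[f']_{\rm Lip}$ bound with no $[f]_{\rm Lip}$ term, which your direct estimate does not deliver.

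The paper (which only writes out the case $b\equiv 0$) removes this discrepancy by applying a \emph{second} Stein identity, namely $\E\,g(Z)(Z^2-1)=\E\,g'(Z)Z$, to $g(z)=f'({\cal M}_h(x,z))$. This converts $\tfrac h2\,\widetilde\sigma''(x)\,\E[f'(m)(Z^2-1)]$ into $\tfrac h2\,\widetilde\sigma''(x)\,\E[f''(m){\cal M}'_{h,z}(x,Z)Z]$, which is now an $f''$ term: since ${\cal M}'_{h,z}=\sqrt h\,\sigma+ h\,\widetilde\sigma Z$ carries an extra $\sqrt h$, the whole piece is bounded by $Ch\|f''\|_{\sup}$ (with $C$ involving $\|\sigma\widetilde\sigma''\|_{\sup}$ and $\|\widetilde\sigma\widetilde\sigma''\|_{\sup}$) and can be absorbed into the $(1+Ch)[f']_{\rm Lip}$ factor. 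With this single adjustment your proof becomes identical to the paper's.
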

\begin{proof} We will only detail the case where the drift $b\equiv0$ to alleviate computations. We will use a second order Stein's identity, namely,  for every twice differentiable function $g:\R\to \R$, 
\[
\E\, g(Z)(Z^2-1) = \E\, g''(Z) = \E\, g'(Z)Z.
\]
We may assume that $f$ is twice differentiable with bounded second derivative. Then, one checks that
\begin{equation}\label{eq:MilsteinTrans}
(Pf)''(x) = \E\left[  f''\big({\cal M}_h(x,Z)\big){\cal M}'_{h,x}(x,Z)^2\right] +  \E\, f'\big({\cal M}_h(x,Z)\big){\cal M}''_{h,x^2}(x,Z).
\end{equation}
Now
\begin{align*}
 \E\, f'\big({\cal M}_h(x,Z)\big){\cal M}''_{h,x^2}(x,Z)&= \sqrt{h}\,\sigma''(x) \E\, f'\big({\cal M}_h(x,Z)\big)Z+ \frac h2 \widetilde\sigma''(x) \E\,[ f'\big({\cal M}_h(x,Z)\big)(Z^2-1)]\\
 &=  \sqrt{h}\,\sigma''(x) \E\,[ f''\big({\cal M}_h(x,Z)\big){\cal M}'_{h,z}(x,Z)] \\
 &\hskip 4cm +\frac h2 \widetilde\sigma''(x)\E\,[ f''\big({\cal M}_h(x,Z)\big) {\cal M}'_{h,z}(x,Z)Z].
\end{align*}

One checks that, for every $x\!\in \R$, 
\begin{align*}
\big|\sqrt{h}\,\sigma''(x) \E\,[ f''\big({\cal M}_h(x,Z)\big){\cal M}'_{h,z}(x,Z)]\big|Ê&\le \sqrt{h}|\sigma''(x) |\|f''\|_{\sup}\| {\cal M}'_{h,z}(x,Z)\|_2 \\
&\le  h  \|f''\|_{\sup}\big(\|\sigma\sigma''\|_{\sup}+\sqrt{h} \|\widetilde\sigma\sigma''\|_{\sup}\big)
\end{align*}
and
\begin{align*}
\big| \widetilde\sigma''(x)\E\,[ f''\big({\cal M}_h(x,Z)\big) {\cal M}'_{h,z}(x,Z)Z]\big|& \le \big| \widetilde\sigma''(x)\big| \|f''\|_{\sup} \| {\cal M}'_{h,z}(x,Z)\|_2\|Z\|_2\\
&\le  \|f''\|_{\sup}\big(\|\widetilde \sigma\sigma''\|_{\sup}+\sqrt{h} \|\widetilde \sigma\widetilde\sigma''\|_{\sup}\big).
\end{align*}
Moreover, using that $\E Z^3= \E\, Z=0$ so that $\E\, Z(Z^2-1) =0$,
\begin{align*}
\E\, {\cal M}'_{h,x}(x,Z)^2&= 1 +h(\sigma'(x) )^2+ \frac{(\widetilde \sigma'(x))^2}{4}h^2\E\, (Z^2-1)^2 \\
&\le 1+ \|\sigma'\|_{\sup} h+\frac{\|\widetilde \sigma'\|^2_{\sup}}{2}h^2.
\end{align*}
Plugging these three inequalities into~\eqref{eq:MilsteinTrans} and keeping in mind that $h$ is always bounded by $T$, we derive from the assumptions made on $\sigma$    the existence of a real constant $C= C(\sigma,T)$  such that 
\[
\| (Pf)''\|_{\sup}\le \|f''\|_{\sup}(1+Ch).
\]
Then, one concludes by regularization  like with the Euler scheme.
\end{proof}

\subsubsection{Euler scheme of a jump model} \label{SecJump2}
We consider the case of an SDE driven by a compound Poisson process with intensity $\lambda>0$ and jump distribution $\mu$
and we denote by $U$ (instaed of $U_1$) a random variable with distribution $\mu$. We will assume that the drift $b$ and the Brownian diffusion coefficient are both zero to enhance the treatment of the jump component. Let $h= T/n$, $n>\lambda T$, $\tilde \lambda = \lambda \E\, U$ and $U_h = U-\tilde \lambda h$.  As $\lambda h\!\in (0,1)$, the Euler scheme~\eqref{EulerJump} is well defined  and its 
 transition is formally defined by 

\begin{equation}\label{eq:JumpTrans}
Pf(x)= (1-\la\,h) f\big(x-\tilde \lambda h\gamma(x)\big) +\la\,h \E\, f\big(x+\gamma(x) U_h\big)
\end{equation}
so that, if $f$ is twice differentiable  

\begin{align*}
(Pf)'(x) &=(1-\la\,h) f'\big(x-\tilde \lambda h\gamma(x)\big)(1-\tilde \lambda h\gamma'(x)) +\la\,h \E\, \big[f'\big(x+\gamma(x) U_h\big)(1+\gamma'(x)U_h) \big]\\
(Pf)''(x) &=(1-\la\,h)\big[ (1-\tilde \lambda h\gamma'(x))^2f''\big(x-\tilde \lambda h\gamma(x)\big)-\lambda h\gamma''(x)  f'\big(x-\tilde \lambda h\gamma(x)\big)\big]\\
&\quad + \la\,h \gamma''(x) \E \,\big[ f'\big(x+\gamma(x) U_h\big)U_h\big]+ \la\,h \E \,\big[ (1+\gamma'(x)U_h)^2 f''\big(x+\gamma(x)U_h\big)\big].
\end{align*}

Assume that  $\mu$ admits a density $p$ so that $\mu(du)= p(u)du$. Then, $U_h \sim p_h(u)du$ with $p_h(u)= p(u+\tilde \lambda h)$. If $\E\, U^2<+\infty$ then $\sup _{0<h\le 1}\E\, U_h^2 <+\infty$ and one easily checks that 
\[
\pi_h(v)= \frac{\int_{-\infty}^{v}up_h(u)du}{\E\, U_h^2}, \; v\!\in \R,
\]
 is a probability density function. Then, by an integration by parts
 \[
 \E\, [f'\big(x+\gamma(x) U_h\big)U_h] = \gamma(x)\, \E\,U_h^2\, \E\, f''\big(x+\gamma(x)V_h\big) \quad \mbox{ with } V\sim \nu = \pi_h(v)dv.
 \]
 Finally, note that $\E \, (1+\gamma'(x)U_h)^2 \le 1+\|\gamma\|_{\sup}\E \, U_h^2$.
Consequently, elementary though tedious computations show that
if $\gamma'$ and $\gamma\gamma''$ are both bounded, then 
\begin{align*}
\|(Pf)''\|_{\sup} &\le \|f''\|_{\sup}\Big[(1-\lambda h)\big((1+\lambda h \|\gamma'\|_{\sup})^2+ \lambda h \|\gamma\gamma''\|_{\sup}\E\, U^2_h\big) + \lambda h \big(1+\|\gamma'\|^2_{\sup}\E \, U_h^2\big) \Big] \\
&\quad  + \|f'\|_{\sup} (1-\lambda h)\lambda h  \|\gamma''\|_{\sup}\\
& \le (1+Ch)  \|f''\|_{\sup}+ \lambda h C' \|f'\|_{\sup}
\end{align*}
where $C$ and $C'$ are two positive real constant depending on $ \|\gamma'\|_{\sup}$, $ \|\gamma\gamma'\|_{\sup}$, $ \|\gamma''\|_{\sup}$, $\E\, U$, $\E\, U^2$ and $\lambda$ (but not on $T$).  Adding a drift component and a Brownian diffusion coefficient leads to the same type of bounds. 

One concludes like for the Euler schemes by regularization. This yields the following proposition.

\begin{prop}[Euler scheme of a jump diffusion]\label{prop:Euler}$(a)$ If $b$, and $\sigma$ are twice times differentiable with $Db$, $D^2b$, $D\sigma$, $D\gamma$ and (all  matrices) $(\partial_{x_{i_0},x_{j_0}}^2\sigma) \sigma^*$ and $(\partial_{x_{i_0},x_{j_0}}^2\gamma) \sigma^*$ are bounded and if $f:\R^d\to \R$ is twice differentiable with a Lipschitz gradient, then there exists a real constant $C= C_{Db,D^2b,D\sigma,D\gamma, (D^2\sigma) \sigma^*, (D^2\gamma) \gamma^*}>0$, not depending on $h$, such that   
\[
[Pf]_{\rm Lip} \le (1+Ch) [f]_{\rm Lip}\quad \mbox{ and }\quad [\nabla Pf]_{\rm Lip} \le (1+Ch)[\nabla f]_{\rm Lip} +h\|D^2b\|_{\sup} \|f\|_{\sup}.
\]

\noindent $(b)$ As a consequence, for every $k\in \![\![0,n]\!]$
\[
 [\nabla P^kf]_{\rm Lip}\le e^{Ct_k}\big([\nabla f]_{\rm Lip}+t_k \big)\|D^2b\|_{\sup} \|\nabla f\|_{\sup}.
\] 
\end{prop}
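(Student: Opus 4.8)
The plan is to imitate the two-step proof already used for the Brownian Euler scheme and graft onto it the jump-term estimates of Section~\ref{SecJump2}. First I would use the representation of the (at most one jump) Euler transition in the general, non-degenerate case,
\[
Pf(x) = (1-\lambda h)\,\E\, f\big({\cal E}_h(x,Z) - \tilde\lambda h\,\gamma(x)\big) + \lambda h\,\E\, f\big({\cal E}_h(x,Z) + \gamma(x)U_h\big),
\]
where ${\cal E}_h(x,z)= x + h\,b(x) + \sqrt h\,\sigma(x)z$ is the pure Brownian Euler map, $Z\sim{\cal N}(0,I_q)$ and $U_h= U-\tilde\lambda h$ is independent of $Z$; this reduces to~\eqref{eq:JumpTrans} when $b\equiv\sigma\equiv 0$. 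The Lipschitz bound $[Pf]_{\rm Lip}\le(1+Ch)[f]_{\rm Lip}$ is then immediate from the Lipschitz/linear-growth hypotheses on $b$, $\sigma$, $\gamma$, exactly as the bound derived from~\eqref{EqEulerLip}.

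For the gradient estimate I would differentiate $Pf$ twice and split the result into a Brownian part and a jump part. The Brownian part is handled verbatim as in the proof of the Euler-scheme proposition (Brownian case): each time a second derivative of $b$ or $\sigma$ is produced, one applies a Gaussian integration by parts (the $\R^q$-version of Stein's identity) to the map $z\mapsto \nabla f\big({\cal E}_h(x,z)\big)$, which transfers one derivative onto $f$ and releases a factor $\sigma(x)$; this is precisely where boundedness of the matrices $(\partial^2_{x_{i},x_{j}}\sigma)\,\sigma^*$ is used, so that all second-order corrections are $O(h)\,\|D^2 f\|_{\sup}$, apart from the single $O(h)\,\|D^2 b\|_{\sup}\,\|\nabla f\|_{\sup}$ contribution coming from $D^2 b$. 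The jump part is treated as in Section~\ref{SecJump2}: the delicate term is $\E\big[\nabla f\big(x+\gamma(x)U_h\big)\,U_h\big]$, which couples $\nabla f$ with the unbounded increment $U_h$; assuming $\mu$ has a density $p$, one removes $U_h$ via the integration-by-parts identity built on the auxiliary probability density $\pi_h(v)=(\E\,U_h^2)^{-1}\int_{-\infty}^{v}u\,p_h(u)\,du$, turning it into $\gamma(x)\,\E\,U_h^2\,\E\,\nabla f\big(x+\gamma(x)V_h\big)$ with $V_h\sim\pi_h$, a quantity bounded by $\|D^2 f\|_{\sup}$ up to a constant; here $\E\,U^2<+\infty$ and boundedness of $(\partial^2\gamma)\,\gamma^*$ (the analogue of $\sigma\sigma''$) are what is needed. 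Collecting the two parts and using $\|D^2 f\|_{\sup}\le[\nabla f]_{\rm Lip}$ and $\|\nabla f\|_{\sup}\le[f]_{\rm Lip}$ yields the claimed bound on $[\nabla Pf]_{\rm Lip}$ for $f\in{\cal C}^2_b$; the general case $f\in{\cal C}^1$ with $[\nabla f]_{\rm Lip}<+\infty$ follows by the Gaussian regularization $f_\varepsilon(x)=\E\, f(x+\varepsilon\zeta)$, $\zeta\sim{\cal N}(0,I_d)$, letting $\varepsilon\downarrow 0$ and using that $(Pf_\varepsilon)'\to(Pf)'$ uniformly on compact sets, just as for the Euler and Milstein schemes.

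Part $(b)$ is then routine: from $[Pf]_{\rm Lip}\le(1+Ch)[f]_{\rm Lip}$ one gets $[P^{k}f]_{\rm Lip}\le(1+Ch)^{k}[f]_{\rm Lip}$, and feeding this into the recursion $[\nabla P^{k}f]_{\rm Lip}\le(1+Ch)\,[\nabla P^{k-1}f]_{\rm Lip}+h\,\|D^2 b\|_{\sup}\,[P^{k-1}f]_{\rm Lip}$ obtained in $(a)$, a discrete-time Gronwall argument gives $[\nabla P^{k}f]_{\rm Lip}\le e^{Ct_k}\big([\nabla f]_{\rm Lip}+t_k\,\|D^2 b\|_{\sup}\,[f]_{\rm Lip}\big)$, which is the announced estimate (up to the usual identification $\|\nabla f\|_{\sup}\le[f]_{\rm Lip}$). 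I expect the main obstacle to be exactly the jump second-derivative term $\E\big[\nabla f(x+\gamma(x)U_h)\,U_h\big]$: it is the one place where one cannot simply differentiate under the expectation and bound by sup-norms, and it is what forces both the absolute continuity of $\mu$ and the moment assumption $\E\,U^2<+\infty$; everything else is bookkeeping of $O(h)$ terms.
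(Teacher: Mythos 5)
Your proposal follows essentially the same approach as the paper: the paper treats the pure jump case ($b\equiv\sigma\equiv 0$) in detail in Section~\ref{SecJump2}, with the two-term Bernoulli transition, Stein-type integration by parts against the auxiliary density $\pi_h(v)=(\E\,U_h^2)^{-1}\int_{-\infty}^{v}u\,p_h(u)\,du$ to absorb the factor $U_h$ in $\E[f'(x+\gamma(x)U_h)U_h]$, and then notes that adding drift and a Brownian coefficient leads to the same type of bounds and concludes by Gaussian regularization. You simply make the combination explicit by writing the full transition with $b,\sigma,\gamma$ up front and splitting the Hessian estimate into a Brownian contribution (handled exactly as for the pure Euler scheme via Stein's identity and boundedness of $(\partial^2\sigma)\sigma^*$) and a jump contribution (handled exactly as in Section~\ref{SecJump2}); part~$(b)$ is the same discrete Gronwall argument. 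The only slip is a typo where you wrote that the integration by parts turns $\E[\nabla f(x+\gamma(x)U_h)U_h]$ into $\gamma(x)\,\E\,U_h^2\,\E\,\nabla f(x+\gamma(x)V_h)$; as in the paper it should produce a second derivative $\E\,D^2 f(x+\gamma(x)V_h)$ (which is what you in fact bound by $\|D^2 f\|_{\sup}$ in the next clause). Otherwise the proof is a faithful reconstruction of the paper's argument.
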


 \section{Recursive quantization for jump processes}\label{sec:NumerExp}
Recall that a first application of the recursive quantization to pure jump processes  has been in~\cite{CalFioGra3}. There approach requires in particular  a  inverse Fourier transform of the marginal of the underlying process  and applied  to jump processes  with  explicit or efficiently computable  characteristic function.  The approach we present here is more general is based on the Euler scheme associated to considered jump diffusion. 

We will temporarily consider slightly more general time discretization schemes than those analyzed in Section \ref{SecJump1} and in Section \ref{SecJump1}  by taking into account the opportunity of more than a single jump during one time step, possibly allowing for coarser discretization. 

\vskip 0.3cm

\noindent $\leadsto$ {\it The algorithm}.  The recursive quantization algorithm of the Euler scheme associated to the jump diffusion  \eqref{EqJumpDif} reads as  \eqref{EqAlgorithmIntro}  where the Euler operator ${\cal E}_k$ is written as a function of the increments  $\Delta \Lambda$ of the Poisson process and the sizes $U_{\ell'}$ of the jumps up to $\Delta \Lambda$. When $\Delta \Lambda = m$ and $U_{\ell'} = u_{\ell'}$ we have 
\[
{\cal E}_k\big(x, z, m, (u_{1}, \ldots, u_m)\big) =  x + h\,  b(t_k,x)  + \sqrt{h} \,\sigma(t_k,x)  z + \gamma(x) \Big(\sum_{\ell'=1}^{m}  u_{\ell'}  - \lambda h\, \E\, U_1 \Big).
\]

\medskip 

\noindent $\leadsto$ {\it The distortion function}.  Recall that the distortion function  $\bar D_{k}$ associated to $\bar  X_{k}$  is given    for every $k=0, \ldots,n-1$ by
   \[
   \bar  D_{k+1}(\Gamma_{k+1}) =  \mathbb E \Bigg[   {\rm dist} \bigg({\cal E}_k \Big(\bar X_k,Z_{k+1},\sum_{\ell=1}^{\Delta \Lambda_{k+1}} U_{\ell}\Big), \Gamma_{k+1}\bigg)^2 \Bigg].  \label {EqDistorNonQuant}
  \]
 Suppose that  $\bar  X_k$ has already been quantized  by  $\hat  X_k^{\Gamma_k}$ and let us set  
 \[
 \widetilde X_{k+1}  =  {\cal E}_k \Big(\hat  X_k^{\Gamma_k} ,Z_{k+1},   \sum_{\ell=1}^{ \Delta \Lambda_{k+1}} U_{\ell}\ \Big).
 \]
   One may approximate the distortion function $\bar D_{k+1}(\Gamma_{k+1})$ by the  (recursive)-distortion  function  $\tilde D_{k+1}(\Gamma_{k+1})$ defined as
     \begin{eqnarray}
  \tilde D_{k+1} (\Gamma_{k+1}) & := &   \mathbb E \big[   {\rm dist}(\widetilde X_{k+1}, \Gamma_{k+1})^2 \big] \nonumber \\
  & = & \mathbb E \Big[   {\rm dist}\Big({\cal E}_k \Big(\hat  X_k^{\Gamma_k} ,Z_{k+1},  \sum_{\ell=1}^{ \Delta \Lambda_{k+1}} U_{\ell}\ \Big), \Gamma_{k+1} \Big)^2 \Big] \nonumber \\ 
  &  = &  \sum_{i=1}^{N_{k}}  \mathbb E \Big[   {\rm dist} \Big({\cal E}_k\Big(x_k^{i},Z_{k+1},  \sum_{\ell=1}^{ \Delta \Lambda_{k+1}} U_{\ell}\Big), \Gamma_{k+1} \Big)^2 \Big] p_k^i  \label{EqDistortion}
 \end{eqnarray}
 where \, $p_k^i = \mathbb P\big(\hat  X_k^{\Gamma_k} =x_k^{i} \big)$.  

 \medskip 
 
\noindent $\leadsto$ {\it How to compute the recursive quantizers}. Stating from \eqref{EqDistortion}, the see that the  recursive-distortion  function associated to the marginal r.v. $\widetilde X_{k+1}$ reads 
\begin{eqnarray}
 \tilde D_{k+1} (\Gamma_{k+1})   &  = &  \sum_{i=1}^{N_{k}}  \mathbb E \Big[   {\rm dist} \big({\cal E}_k(x_k^{i},Z_{k+1},   \sum_{\ell=1}^{ \Delta \Lambda_{k+1}} U_{\ell} \Big), \Gamma_{k+1} \big)^2 \Big] p_k^i  \nonumber \\
&=&  \sum_{i=1}^{N_{k}}  \sum_{m=0}^{+\infty} p_k^i\, p_m  \int_{\mathbb R^m} \prod_{\ell=1}^m \nu(d u_{\ell})  \mathbb E \Big[  {\rm dist} \Big({\cal E}_k \big(x_k^{i},Z_{k+1},  \sum_{\ell=1}^m u_{\ell} \big), \Gamma_{k+1} \Big)^2  \Big]
\end{eqnarray}
 where $p_m = \PP ( \Delta \Lambda_{k+1} = m) =e^{-\lambda h}  \frac{(\lambda h)^{m}}{m!}$.  Our aim is then to compute the sequence $(\Gamma_k)_{0 \le k \le n}$ of optimal quantizers  defined for every $k \in \{1, \ldots, n\}$ by 
  \begin{equation} \label{EqOptQuant}
\Gamma_k \!\in    \arg\min \big\{ \tilde D_{k}(x),\, x\!\in  (\mathbb R^d)^{N_k}\big \},
\end{equation}
 supposing that $\bar X_0$ has already been quantized as $\Gamma_0$.  We discuss with respect to two main situations: when the jump sizes are normally  distributed and for a given general distribution $\nu$. We remark however that the recursive-distortion may be simplified in the short time situation (when $h \approx 0$), making the computations more easy. Since the sort time  situation  is the usual  framework, we will consider that  framework from now on.

\subsection{The short time framework} 
 
It is the  situation where  $h \approx 0$ and where we consider that there is a most one jump during a time step. In this case,  we may consider that for every $k =1, \ldots, n$,   $ \Delta \Lambda_{k}$  has a Bernoulli distribution with 
 \[
 \mathbb P(\Delta \Lambda_k = 1) = \lambda h  \qquad \textrm{ and } \qquad  \mathbb P(\Delta \Lambda_k =0) = 1 - \lambda h. 
 \]
In this case, $ \tilde D_{k+1}$ reads
 \begin{eqnarray*}
 \tilde D_{k+1} (\Gamma_{k+1})   &=&  (1-\lambda h)  \sum_{i=1}^{N_{k}} p_k^i\, \mathbb E \Big[  {\rm dist} \Big({\cal E}_k \big(x_k^{i},Z_{k+1},  0 \big), \Gamma_{k+1} \Big)^2  \Big] \\
 &+& \lambda h  \sum_{i=1}^{N_{k}} p_k^i\,  \int_{\mathbb R} \nu(d u)  \mathbb E \Big[  {\rm dist} \Big({\cal E}_k \big(x_k^{i},Z_{k+1},  u \big), \Gamma_{k+1} \Big)^2  \Big].
\end{eqnarray*}

We next consider  the case where the distribution $\nu$ of $U_1$ is  a gaussian distribution with mean $\mu$ and variance $\vartheta^2$ before considering the general case.  

\medskip 


\noindent $\diamond$ {\it When the jump size has a Normal distribution}.  We suppose that for every $\ell \ge 1$, $U_{\ell} \sim {\cal N}(\mu,\vartheta^2)$.  Then, when  $ \Delta \Lambda_{k+1} =m \in \{0, 1\}$,  we have
 \begin{eqnarray*}
(\bar X_{k+1} \vert  \bar X_{t_k}=x ) &\stackrel{\cal L}{=}& x + h\,  b(t_k,x) + \mu\, (m - \lambda h ) \gamma(x)  + \sqrt{ h \sigma^2(t_k,x) + m \vartheta^2  \gamma^2(x) }\,  \xi_{k+1} 
\end{eqnarray*}
where $(\xi_k)_{k=1,\ldots, n}$ is an   i.i.d., sequence of ${\cal N}(0;1)$-distributed random variables, independent from  $\bar X_0$.  In this case, the distortion reads 
\begin{equation}  \label{EqDistGauss}
 \tilde D_{k+1} (\Gamma_{k+1}) =   \sum_{i=1}^{N_{k}}  \sum_{m=0}^{1}  \mathbb E \Big[  {\rm dist} \Big({\cal E}^g_k \big(x_k^{i},m, \xi_{k+1} \big), \Gamma_{k+1} \Big)^2  \Big]   p_k^i  \, p_m  
\end{equation}
where $p_0  = 1- \lambda h$, $p_1 = \lambda h$, and where   for every $x\in \mathbb R^d , z \in \R$, $m \in \{0, 1\}$,
\begin{equation}
{\cal E}^g_k (x,m,z) =  x + h\,  b(t_k,x) +  \mu\, (m - \lambda h ) \gamma(x)  + \sqrt{ h \sigma^2(x) +  m \vartheta^2  \gamma^2(x) }\, z.
\end{equation}
Set $\mu_k(m,x) =  x + h\,  b(t_k,x) +\mu\,  (m - \lambda h ) \gamma(x) $ and $v_k(m,x) = \sqrt{ h \sigma^2(t_k,x) +  m\vartheta^2  \gamma^2(x) }$.  We also set for every $k=0,\ldots,n-1$ and every    $j=1,\ldots,N_{k+1}$,
$$ 
x_{k+1}^{j-1/2} = \frac{ x_{k+1}^{j} + x_{k+1}^{j-1}  }{2}, \  x_{k+1}^{j+1/2} = \frac{ x_{k+1}^{j} + x_{k+1}^{j+1}  }{2}, \ \textrm{ with }  x_{k+1}^{1/2} = -\infty,  x_{k+1}^{N_{k+1}+1/2} =+\infty, 
$$ 
and  define
$$ 
x_{k+1}^{j-}(m,x): = \frac{x_{k+1}^{j-1/2} - \mu_{k}(m,x) }{v_k(m,x)} \quad \textrm{ and } \quad x_{k+1}^{j+}(m,x): = \frac{x_{k+1}^{j+1/2} - \mu_{k}(m,x) }{v_k(m,x)},\; k=0,\ldots,n-1.
$$

  We may compute the components of the gradient vector and the Hessian matrix associated with this distortion function using  standard computations similar to~\cite{PagSag}  (see Appendix \ref{AppenB}).   

\begin{prop} 
  The transition probability $p_k^{ij}= \mathbb P\big(\widetilde X_{k+1}\!\in C_j(\Gamma_{k+1})\vert \widetilde X_{k} \!\in C_i(\Gamma_{k})\big)$ is given by
  \setlength\arraycolsep{3pt}
\begin{eqnarray}
 p_k^{ij}& =&  \sum_{m=0}^{1}   p_m \Big( \Phi_0(x_{k+1}^{j+}(m,x_{k}^i))  -   \Phi_0(x_{k+1}^{j-}(m,x_{k}^i))\Big)  .
  \label{EqEstProba}
\end{eqnarray}

 \noindent  The probability $p_{k+1}^j= \mathbb P\big(\widetilde X_{k+1}\!\in C_j(\Gamma_{k+1})\big)$  is given for every $j=1,\cdots,N_{k+1}$ by 
\setlength\arraycolsep{3pt}
\begin{eqnarray}
p_{k+1}^j & =&  \sum_{i=1}^{N_k} \sum_{m=0}^{1} p_k^i\, p_m  \Big(   \Phi_0(x_{k+1}^{j+}(m,x_{k}^i))  - \Phi_0(x_{k+1}^{j-}(m,x_{k}^i))  \Big) .
  \label{EqEstProba}
\end{eqnarray}
In the previous expressions, $p_0  = 1- \lambda h$, $p_1 = \lambda h$.

\end{prop}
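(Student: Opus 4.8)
The plan is to reduce everything to the conditional Gaussian representation of $\widetilde X_{k+1}$ displayed just above the statement and then to evaluate a one–dimensional Gaussian interval probability. First I would record the identification $\{\widetilde X_k\in C_i(\Gamma_k)\}=\{\widehat X_k=x_k^i\}$ up to a $\PP$-null set: indeed $\widehat X_k={\rm Proj}_{\Gamma_k}(\widetilde X_k)$, the boundary of any Voronoi cell is Lebesgue-negligible, and the (conditional) law of $\widetilde X_k$ is diffuse; this event has probability $p_k^i$. Moreover, by the recursion $\widetilde X_{k+1}={\cal E}_k^g(\widehat X_k,\Delta\Lambda_{k+1},\xi_{k+1})$, conditionally on $\widehat X_k=x_k^i$ the randomness of $\widetilde X_{k+1}$ comes only from $(\Delta\Lambda_{k+1},\xi_{k+1})$, which is independent of $\widehat X_k$; and $\Delta\Lambda_{k+1}$ (Bernoulli of parameter $\lambda h$) is independent of $\xi_{k+1}\sim{\cal N}(0,1)$. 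In particular conditioning on $\widetilde X_k\in C_i(\Gamma_k)$ amounts to conditioning on $\widehat X_k=x_k^i$, which is the Markov feature that makes the transition weights well defined.

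Next I would condition further on $\Delta\Lambda_{k+1}=m$, $m\!\in\{0,1\}$, an event of probability $p_m$ with $p_0=1-\lambda h$, $p_1=\lambda h$. On this event one has the exact law
\[
\bigl(\widetilde X_{k+1}\,\big|\,\widehat X_k=x_k^i,\ \Delta\Lambda_{k+1}=m\bigr)\ \stackrel{\cal L}{=}\ \mu_k(m,x_k^i)+v_k(m,x_k^i)\,\xi_{k+1},
\]
with $\mu_k$, $v_k$ as defined in the text, and $v_k(m,x_k^i)>0$ under the standing non-degeneracy of $\sigma$ (and automatically when $m=1$). Since in dimension one the Voronoi cell $C_j(\Gamma_{k+1})$ is the interval $\bigl(x_{k+1}^{j-1/2},x_{k+1}^{j+1/2}\bigr)$ with the conventions $x_{k+1}^{1/2}=-\infty$, $x_{k+1}^{N_{k+1}+1/2}=+\infty$, standardising the Gaussian gives
\[
\PP\bigl(\widetilde X_{k+1}\in C_j(\Gamma_{k+1})\,\big|\,\widehat X_k=x_k^i,\ \Delta\Lambda_{k+1}=m\bigr)=\PP\bigl(\xi_{k+1}\in(x_{k+1}^{j-}(m,x_k^i),x_{k+1}^{j+}(m,x_k^i))\bigr),
\]
which equals $\Phi_0\bigl(x_{k+1}^{j+}(m,x_k^i)\bigr)-\Phi_0\bigl(x_{k+1}^{j-}(m,x_k^i)\bigr)$ by the very definition of $x_{k+1}^{j\pm}(m,x)$.

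Finally I would assemble the two identities by the law of total probability. Summing the last display over $m\!\in\{0,1\}$ with weights $p_m$ — legitimate because $\Delta\Lambda_{k+1}$ is independent of $(\widehat X_k,\xi_{k+1})$ — yields the announced formula for $p_k^{ij}=\PP\bigl(\widetilde X_{k+1}\in C_j(\Gamma_{k+1})\mid\widetilde X_k\in C_i(\Gamma_k)\bigr)$; then summing over $i=1,\dots,N_k$ with weights $p_k^i=\PP(\widehat X_k=x_k^i)$ and using $p_{k+1}^j=\sum_{i=1}^{N_k}p_k^i\,p_k^{ij}$ gives the second identity. The whole argument is elementary; the only points needing a little care — and the closest thing to an obstacle — are the almost-sure reduction of $\{\widetilde X_k\in C_i(\Gamma_k)\}$ to $\{\widehat X_k=x_k^i\}$, the treatment of the Voronoi boundaries (null under the diffuse conditional law), and the verification $v_k(m,x_k^i)>0$ ensuring the standardisation makes sense; everything else is bookkeeping of the independence structure.
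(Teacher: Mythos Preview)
Your proposal is correct and follows exactly the natural route: identify the conditioning event $\{\widetilde X_k\in C_i(\Gamma_k)\}$ with $\{\widehat X_k=x_k^i\}$, condition further on the Bernoulli variable $\Delta\Lambda_{k+1}$, use the conditional Gaussian representation ${\cal E}_k^g(x_k^i,m,\xi_{k+1})=\mu_k(m,x_k^i)+v_k(m,x_k^i)\xi_{k+1}$, and read off the interval probability of a standard normal. The paper does not spell out a proof for this proposition~--~it treats the computation as routine, in the spirit of the ``standard computations similar to~[PagSag]'' mentioned just above~--~so there is no alternative approach to compare with; yours is precisely the argument one would write down.
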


%
We next   write down the modified distortion function   when the distribution of the jump size is  $\nu$.

\medskip 

\noindent $\diamond$ {\it When the jump size has  a given distribution $\nu$}.   We suppose here  that for every $\ell \ge 1$, $U_{\ell} \sim \nu$. In this case 
\begin{equation*}
 \tilde D_{k+1} (\Gamma_{k+1})   =   \sum_{i=1}^{N_{k}}  \sum_{m=0}^{1} p_k^i\, p_m  \int_{\mathbb R}  \nu(d u)  \mathbb E \Big[  {\rm dist} \Big({\cal E}_k \big(x_k^{i},Z_{k+1},  u \, m  \big), \Gamma_{k+1} \Big)^2  \Big]
\end{equation*}
 where  $p_0  = 1- \lambda h$ and  $p_1 = \lambda h$.   The components of the gradient vector and the Hessian matrix   of the distortion function may be computed using standard computations similar to~\cite{PagSag} (see Appendix \ref{AppenC}).  We may also compute the weights and transition weights via semi-closed formulae.

\subsection{Numerical experiment: pricing of put option in a  Merton jump model}
We consider in the section a european  put option pricing problem where the underlying asset  $X$ evolves (under a risk neutral probability) following  the dynamics: 
\begin{equation} \label{EqBSmodel}
dX_t  = r X_t dt + \sigma X_t dW_t + X_{t-} d \tilde{\Lambda}_t , \qquad X_0=x_0 
\end{equation}
where  $W$ is a Brownian motion,  $\tilde{\Lambda}$ is the compensated compound Poisson process defined by $ \tilde{\Lambda}_t= \sum_{i=1}^{\Lambda_t}  U_i - \lambda  \mathbb E( U_1) \, t$, where the $U_i$ are i.i.d. random variable defined by $ U_i = e^{ \xi_i}-1$ with   $\xi_i = {\cal N}(0; \vartheta^2)$ and $\Lambda$ is a Poisson process with intensity $\lambda>0$, independent (with all the $U_i$'s) from $W$.  The solutions of \eqref{EqBSmodel} reads for every $t \in [0,T]$,
\[
X_t  = X_0\, \exp \Big((r- \frac{\sigma^2}{2}) t  + \sigma W_t \Big) \prod_{i=1}^{\Lambda_t}  e^{\xi_i} .
\]

\medskip 

Denote by $\Phi_0(\cdot)$, the cdf of the ${\cal N}(0,1)$ and  by 
\begin{eqnarray*}
&&  P_{_{\rm BS}}(x,\sigma, r, \tau) = - x\, \Phi_0 (- d_{1}(x, \sigma, r, \tau)) +  e^{-r \tau} K\, \Phi_0(-d_{2}(x, \sigma, r, \tau)) \\
&\textrm{ with } & d_1(x, \sigma, r, \tau) = \frac{1}{\sigma \sqrt{\tau}} \Big( \log \frac{x}{ K} + (r+ \frac{\sigma^2}{2}) \tau\Big)  \quad \textrm{ and } \quad d_{2}(x, \sigma, r, \tau)  = d_1(x, \sigma, r, \tau) - \sigma \sqrt{\tau},
\end{eqnarray*}
the price of the standard Black-Scholes-Merton put price on a geometric Brownian motion with volatility $\sigma$ when the interest rate is $r$, the current stock price is $x$, the time to maturity is $\tau$, and the strike price is $K$.  Then, the risk neutral price $P_0$ at time $t=0$ of the put which underlying asset evolves following \eqref{EqBSmodel} is given by 
\begin{eqnarray}
P_0 &=& e^{-r T}  \mathbb E \big( \max(K -X_T ,0) \big) \nonumber\\
&=& e^{-rT} \sum_{k=0}^{+\infty}  \frac{(\lambda T)^k}{k!} P_{_{\rm BS}} \Big(x_0 e^{\frac{k\vartheta^2}{2} - \lambda T \mathbb E U_1}, \Big(\sigma^2 + \frac{k \vartheta^2}{T}\Big)^{^{1/2}}, r, T \Big).  \label{EqPriceBSJ}
\end{eqnarray}

Our aim is now to compare the call prices we obtain using the recursive quantization with the true price given by \eqref{EqPriceBSJ}.  Using the recursive quantization, the price $P_0$ is approximated by 
\begin{equation}  \label{EqHatC_0}
\widehat P_0  =  e^{-r T}  \mathbb E \big( \max(K - \widehat X_{t_n} ,0) \big)  = e^{-rT}\sum_{i=1}^{N_n}    \max(K- x_n^{i},0)\, \mathbb P(\widehat X=x_n^i)
\end{equation}
for  a regular time discretization steps $t_k = \frac{k T}{n}$ on the interval $[0,T]$ and where  $\widehat X_{t_n}$ is the (optimal) recursive quantization (on the grid $\Gamma_n=\{x_n^1, \ldots,x_n^{N_n}\}$ of size $N_n$) of the marginal random variable $\widehat X_{t_n}$ induced by the Euler scheme associated with \eqref{EqBSmodel}.

\begin{figure}[htpb]
 \begin{center}
  \!\includegraphics[width=12.0cm,height=15cm,angle=-90]{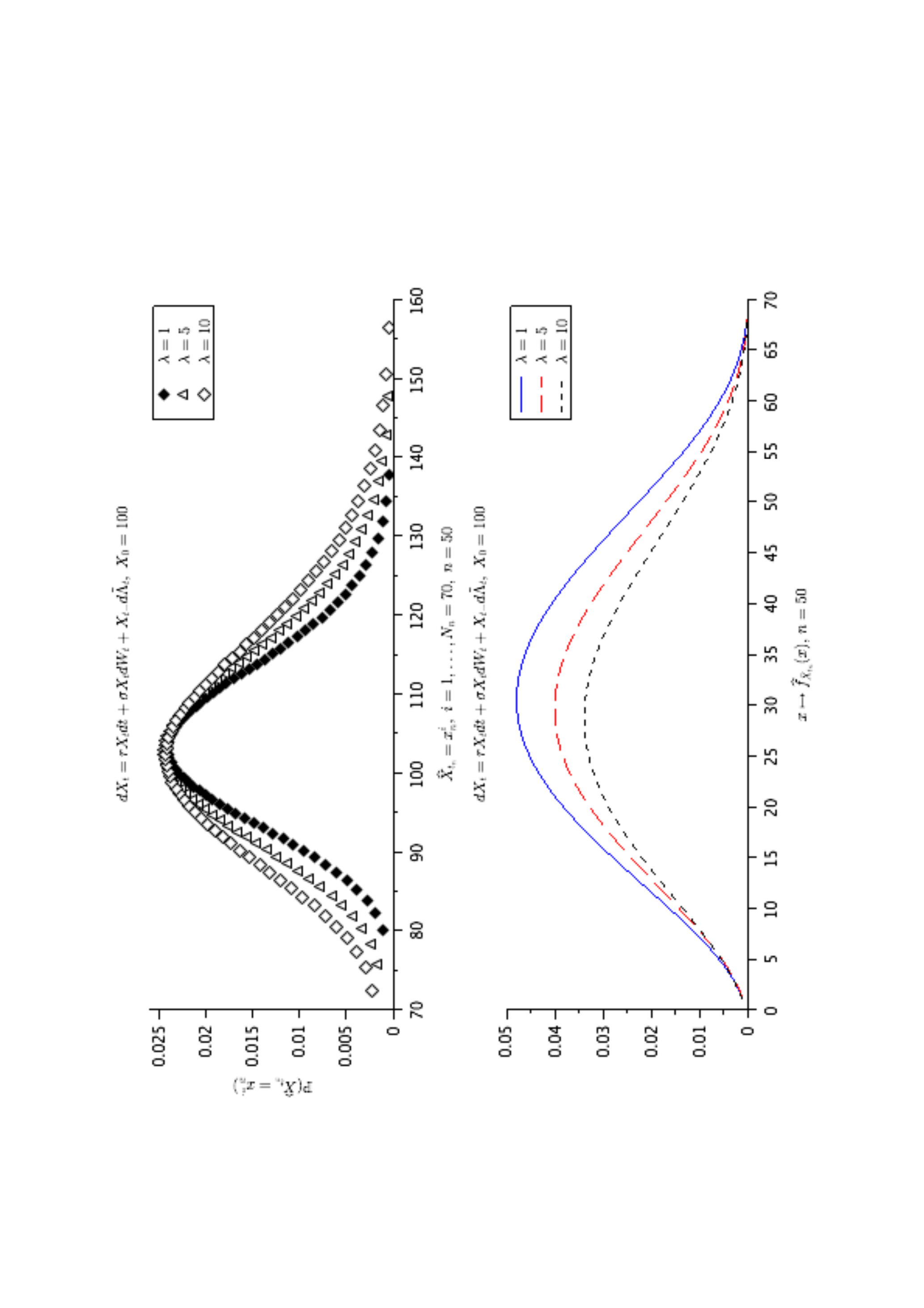}	
  \caption{\footnotesize  (impact of $\lambda$) The model is $d X_t = r\, X_t dt + \sigma X_t dW_t + X_{t-} d \tilde{\Lambda}_t$, $X_0=100$, $\tilde{\Lambda}_t= \sum_{i=1}^{\Lambda_t}  U_i - \lambda t \mathbb E  U_1$, where $ U_i = e^{ \xi_i}-1$ with   $\xi_i = {\cal N}(0; \vartheta^2)$ and $\Lambda$ is a Poisson process with intensity $\lambda$. We choose   $r=0.08$, $\sigma=0.108$,  $\vartheta=0.04$, $T=0.5$. For the quantization, the number of discretization step $n=50$ and $N_k=70$, $\forall \, k =1, \ldots,n$.   We compare de distributions of $\widehat X_{t_n}$ and the densities estimate functions $x \mapsto \hat f_{ \bar X_{t_n}}(x) = 2 \,\mathbb P(\widehat X_{t_n} = x_n^i) /(x_n^{i+1} - x_{n}^{i-1}) \mathds{1}_{[x_n^{i-1},\ x_n^{i}]} (x)$,  $\, x \in [x_n^{2},  x_n^{N_n-1}]$, $n=50$, for $\lambda \in \{1, 5, 10 \}$.} 
  \label{figure1}
  \end{center}
\end{figure}

\noindent $\diamond$ {\em Impact of  $\lambda$ and $\vartheta$  on the marginal distributions of the stochastic process \eqref{EqBSmodel}}.  Before dealing with the numerical experiments on the pricing, we want to see how  the recursive quantization of the Euler scheme $(\bar X_{t_k})_{k=0, \ldots,n}$ looks like. To this end and to see the impact of the intensity $\lambda$ of the jumps on the marginal distributions of the stochastic process \eqref{EqBSmodel}, we compare in Figure \ref{figure1},  the distributions of the recursive quantization $\widehat X_{t_n}$ and   the associated (truncated) marginal densities approximate functions  $\hat f_{\bar X_{t_{50}}}$, for the values of $\lambda \in \{1, 5, 10\}$.  The truncated densities approximate function  $\hat f_{\bar X_{t_k}}$  is defined on  $[x_k^{2},  x_k^{N_k-1}]$ as 
$$\hat f_{X_k}(x) =  \frac{2 \,\mathbb P(\widehat X_k = x_k^i) }{(x_k^{i+1} - x_{k}^{i-1}) }\,\mathds{1}_{[x_k^{i-1},\ x_k^{i}]} (x), \qquad x \in [x_k^{2},  x_k^{N_k-1}].$$
For the numerical tests we use the following set of parameters: $X_0=100$, $n=80$, $T=0.5$, $N_k =70$ for every $k=1, \ldots, n$ and $\widehat X_0 = \mathds{1}_{\{x_0\}}$. We also set  $r=0.08$, $\sigma = 0.108$, $\vartheta=0.04$.   The plots of Figure \ref{figure1}  show that the higher $\lambda$ is, the larger will be the tails (which are not represented in these plots) of the distributions of the marginals of the stochastic process \eqref{EqBSmodel}. 

\begin{figure}[htpb]
 \begin{center}
  \!\includegraphics[width=12.0cm,height=15cm,angle=-90]{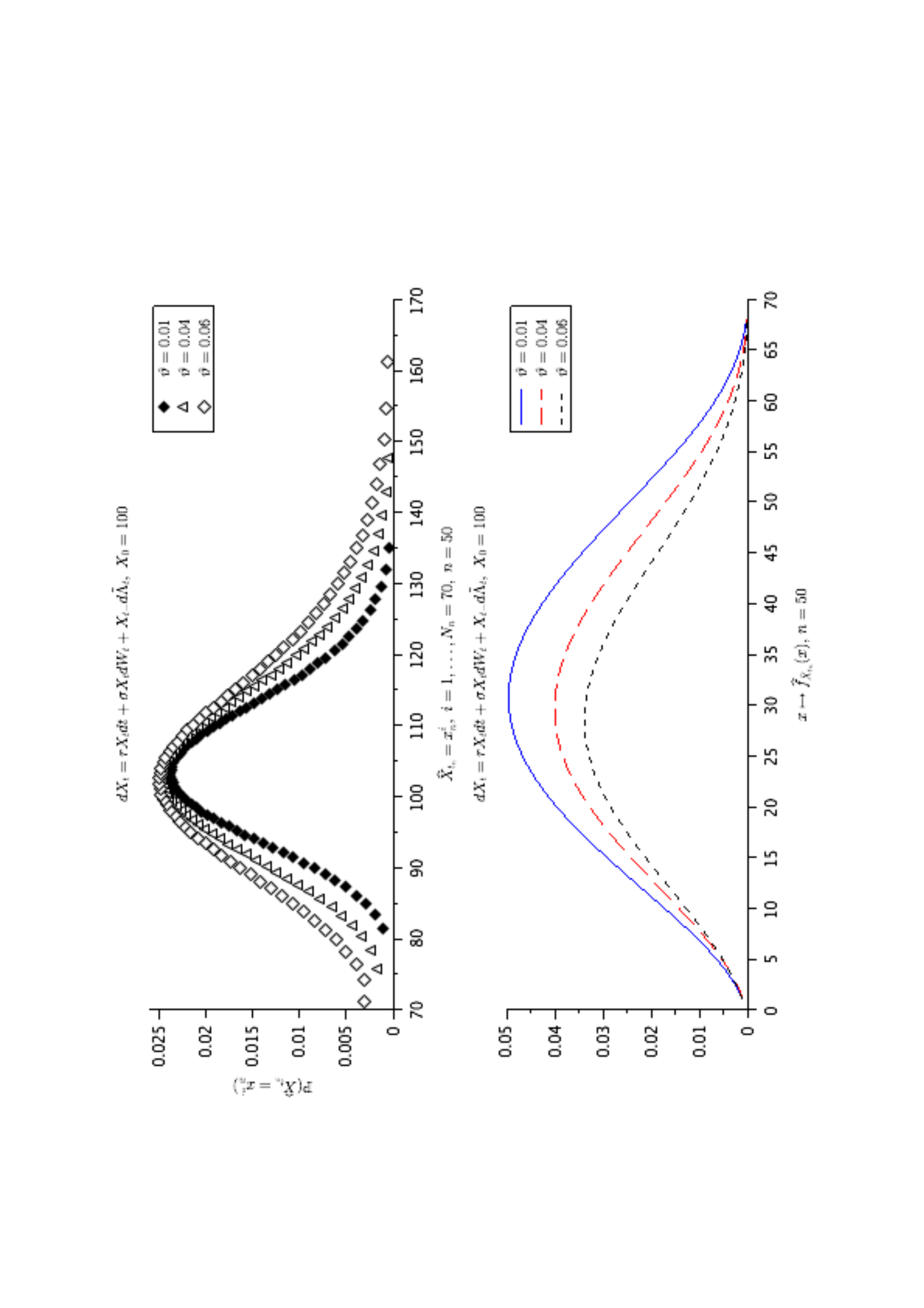}	
  \caption{\footnotesize  (Impact of $\vartheta$) The model is $d X_t = r\, X_t dt + \sigma X_t dW_t + X_{t-} d \tilde{\Lambda}_t$, $X_0=100$, $\tilde{\Lambda}_t= \sum_{i=1}^{\Lambda_t}  U_i - \lambda t \mathbb E  U_1$, where $ U_i = e^{ \xi_i}-1$ with   $\xi_i = {\cal N}(0; \vartheta^2)$ and $\Lambda$ is a Poisson process with intensity $\lambda$. We choose   $r=0.08$, $\sigma=0.108$,  $\lambda=5$, $T=0.5$. For the quantization, the number of discretization step $n=50$ and $N_k=70$, $\forall \, k =1, \ldots,n$.   We compare de distributions of $\widehat X_{t_n}$ and the densities estimate functions $x \mapsto \hat f_{ \bar X_{t_n}}(x) = 2 \,\mathbb P(\widehat X_{t_n} = x_n^i) /(x_n^{i+1} - x_{n}^{i-1}) \mathds{1}_{[x_n^{i-1},\ x_n^{i}]} (x)$,  $\, x \in [x_n^{2},  x_n^{N_n-1}]$, $n=50$, for $\vartheta \in \{0.01, 0.04, 0.06 \}$.} 
  \label{figure2}
  \end{center}
\end{figure}

In Figure \ref{figure2}, we compare the same functions as in Figure \ref{figure1} but this time by putting $\lambda=5$ and by making varying $\vartheta \in \{0.01, 0.04, 0.06\}$ to see the influence of the parameter $\vartheta$ on the marginal distributions of the stochastic process \eqref{EqBSmodel}.  As expected, we see  once again that the higher $\vartheta$ is, the  larger will be  the tails of the distributions of the marginals of the stochastic process \eqref{EqBSmodel}. 

Remark that, compared with the model without jump, there are additional integral terms with respect to the distribution $\nu$ of the jump size when computing the gradient and the Hessian matrix of the distortion function. These integrals are approximated using the optimal quantization method of size $N=50$. This may  increase a little bit  the computation time w.r.t. to  models without jump. In our example, we get all the marginal distributions with their associated weights in around 1min and 30s, using {\it scilab} software in a CPU 3.1 GHz and 16 Gb memory computer. Our aim here is just to test the performance of our method,  not to optimize the execution time. However, it is clear that making the code in C program (even, optimizing the {\it scilab} code) will reduce drastically the computation time since their is many {\it for} loops in the actual code that increase the computation time.  

\medskip

\noindent $\diamond$ {\em Pricing of a European put option with jump process using the recursive quantization}.  Let us come back  to the pricing problem where our aim is  to test the performance of the  recursive quantization method. To this end, we compare the put price $\widehat P_0$ obtained from the recursive quantization method using the formula \eqref{EqHatC_0} with the  true call price which formula is given from Equation \eqref{EqPriceBSJ}. The comparison is done with the following set of parameters:  $r=0.08$, $\sigma=0.07$, $T=0.5$,  the number of discretization step $n=50$ and the size of the quantizations  $N_k=100$, $\forall \, k =1, \ldots,n$, with $N_0=1$. We make varying $\lambda$ in the set  values $\{1, 3, 5 \}$, $\vartheta$ in the set $\{0.01, 0.04\}$ and the strike $K$ in the set values $\{90, 92, 94, 96, 98, 100  \}$ and display the results in Table \ref{table1}. 
 
 For matters of comparaison with the Black-Scholes model where the underlying asset price evolves following the dynamics $dX_t = r X_t dt + \sigma_{_{\rm BS}} dW_t$, a computation of  ${\rm Var} (\ln (X_t))$ in both models allows us to write down $\sigma_{_{\rm BS}}$ (the equivalent volatility in the Black-Scholes model) with respect to $\sigma$:   $\sigma_{_{\rm BS}} =  \sqrt{\sigma^2 + \lambda \vartheta^2}$.

 The numerical results show a maximal absolute error of order  $10^{-2}$ for a Black-Scholes  equivalent volatility $\sigma_{_{\rm BS}} = 0.1135782$ (obtained when  $\lambda=5$ and $\vartheta=0.04$) and a minimal absolute error of order  $10^{-3}$ for a Black-Scholes  equivalent volatility $\sigma_{_{\rm BS}} = 0.0707107$ (obtained with $\lambda=1$ and $\vartheta=0.01$). We also depict  in Table \ref{table1} the true price $P_{_{\rm BS}}$ (and the approximate price $\widehat P_{_{\rm BS}}$  from the recursive quantization, see the numerical examples in~\cite{PagSagMQ} for more detail) of the put in the Black-Scholes model in order to compare it with the true price $P_0$ in the jump model \eqref{EqBSmodel}. We see, as expected, that these two prices tend to coincide when $\lambda$ and $\vartheta$ are small.

   \bigskip

  \begin{table}[h!]
\begin{center}
{\small
\begin{tabular}{|c|c|c|c|c|c|}
\hline
Strike  &$\lambda$& $P_0 / \widehat P_0$ ($\vartheta=1\%$)&  $P_{_{\rm BS}}  / \widehat P_{_{\rm BS}} (\vartheta=1\%)$  &$P_0 / \widehat P_0$ ($\vartheta=4\%$) & $ P_{_{\rm BS}} / \widehat P_{_{\rm BS}}  (\vartheta=4\%)$  \\
\hline
 & 1 &  0.002 / 0.002 &  0.002 / 0.002& 0.015 / 0.013 & 0.009  / 0.008 \\
90  & 3 & 0.003 / 0.002& 0.003 / 0.002& 0.057 / 0.055 & 0.043 / 0.041  \\
 & 5 &0.004 / 0.003 & 0.004 / 0.003&0.120 / 0.118 &  0.104 / 0.101 \\
\hline
 & 1&0.010 / 0.009& 0.010 / 0.009 &0.037  / 0.036 &0.029 / 0.027\\
   92  & 3 &0.012 / 0.011& 0.012  / 0.115& 0.115 / 0.114 &0.100  / 0.098\\
 & 5 & 0.014 /  0.013  &0.014 / 0.013& 0.214 / 0.213& 0.203 /  0.200\\
 \hline
 & 1 &0.035 /  0.035 &  0.036  / 0.035 & 0.087 / 0.087&0.080 / 0.078\\
    94  & 3 &0.041 / 0.040& 0.040 / 0.040&0.218 / 0.218 &0.211 / 0.207\\
 & 5 &0.045 / 0.045& 0.047 / 0.045 &0.368 / 0.370 & 0.371 / 0.367  \\
\hline
 & 1 &0.103 / 0.105 &  0.107  / 0.105 &0.193 / 0.196 &0.193 / 0.190\\
    96  & 3 & 0.117 / 0.113 & 0.116 / 0.115&0.396 / 0.405 &0.406 / 0.402\\
 & 5 &0.124 / 0.129  & 0.126  / 0.126& 0.607 /  0.617&0.635 / 0.629 \\
\hline
 & 1 &0.259 / 0.270 &  0.270  / 0.267 & 0.356 / 0.407&0.414 / 0.410\\
    98  & 3 &0.289 / 0.280& 0.290  /  0.286 & 0.684 / 0.704&0.724 / 0.719\\
 & 5 & 0.296 / 0.300 & 0.310 / 0.304 &0.961 /   0.982&  1.022 / 1.016 \\
\hline
 & 1 &0.566 / 0.585 &  0.589  / 0.585 & 0.751 / 0.775 &0.796 / 0.791\\
    100  & 3 &0.593 / 0.610 & 0.617  / 0.613& 1.121 / 1.159 &1.200 / 1.194\\
 & 5 & 0.620 / 0.640 & 0.650 / 0.641 & 1.459 /  1.499 & 1.561  / 1.554 \\
\hline
\end{tabular}
}
\end{center}
\caption{ \footnotesize The model is $d X_t = r\, X_t dt + \sigma X_t dW_t + X_{t-} d \tilde{\Lambda}_t$, $X_0=100$, $\tilde{\Lambda}_t= \sum_{i=1}^{\Lambda_t}  U_i - \lambda t \mathbb E  U_1$, where $ U_i = e^{ \xi_i}-1$ with   $\xi_i = {\cal N}(0; \vartheta^2)$ and $\Lambda$ is a Poisson process with intensity $\lambda$. We choose   $r=0.08$, $\sigma=0.07$, $T=0.5$. For the quantization, the number of discretization step $n=50$ and $N_k=100$, $\forall \, k =1, \ldots,n$. $P_0$ (resp. $P_{_{\rm BS}}$) is the true put price in the Merton model with jump (resp. without jump) and $\widehat P_0$ and $\widehat{P}_{_{\rm BS}}$ are their respective recursive quantization approximation.} \label{table1}
\end{table}

 \newpage
  
\bibliographystyle{plain}
\bibliography{NLfilteringbib}

\bigskip 

\begin{appendix}

\section{Proof of Lemma \ref{lem:key}}

\begin{proof}[\textit{Proof (of the key Lemma \ref{lem:key})}] $(a)$ It follows from the elementary inequality
\begin{equation*}   \label{LemPropPrinc}
\forall u\!\in \mathbb R^d,\quad \vert  a + u \vert^p  \le \vert a \vert^p + p \vert a \vert^{p-2} (a \vert u) + \frac{p(p-1)}{2} \big(\vert a \vert^{p-2} \vert u \vert^2 + \vert u \vert^p  \big) 
\end{equation*}
that  
\[
\vert a + \sqrt{h} A \zeta \vert^p  \le \vert a \vert^p + p  h^{\frac{1}{2}} \vert a \vert^{p-2}  (a \vert A \zeta) + \frac{p(p-1)}{2}\big( \vert a \vert^{p-2}  h  \vert  A \zeta \vert^2  + h^{\frac{p}{2}}  \vert  A \zeta \vert^p  \big). 
\]
 Applying Young's inequality with conjugate  exponents $p' = \frac{p}{p-2}$ and $q' = \frac{p}{2}$, we get 
 \[
  \vert a \vert^{p-2}  h  \vert  A \zeta \vert^2  \le h \Big(  \frac{\vert a \vert^p}{p'} + \frac{\vert  A \zeta \vert^p}{q'} \Big),
 \]
 which leads to 
  \begin{eqnarray*}
 \vert a + \sqrt{h} A \zeta \vert^p &  \le &  \vert a \vert^p + p  h^{\frac{1}{2}} \vert a \vert^{p-2}  (a \vert A \zeta) + \frac{p(p-1)}{2}\Big(  \frac{h}{p'}  \vert a \vert^{p} + \Big(\frac{h}{q'} + h^{\frac{p}{2}} \Big)   \vert  A \zeta \vert^p   \Big)  \\
 &  \le & \vert a \vert^p \Big( 1 +  \frac{p(p-1)}{2 p'} h \Big) + p  h^{\frac{1}{2}} \vert a \vert^{p-2}  (a \vert A \zeta)  +  h \Big( \frac{p(p-1)}{2 q'} + h^{\frac{p}{2}-1} \Big)   \vert  A \zeta \vert^p.
   \end{eqnarray*}
Taking the expectation    yields  (owing to the fact that $\mathbb E\,\zeta = 0$)
\[
\mathbb E  \vert a + \sqrt{h} A \zeta \vert^p   \le     \Big( 1 +  \frac{(p-1)(p-2)}{2 } h \Big)  \vert a \vert^p   +  h \Big(1+ p + h^{\frac{p}{2}-1} \Big)  \mathbb E \vert  A \zeta \vert^p.
\]
As a consequence, we get 
\[
\mathbb E  \vert a + \sqrt{h} A \zeta \vert^p   \le    \Big( 1 +  \frac{(p-1)(p-2)}{2 } h \Big)   \vert a \vert^p    +  h \Big( 1+p + h^{\frac{p}{2}-1} \Big)  \Vert A  \Vert^p \mathbb E \vert  \zeta \vert^p.
\]

\noindent $(b)$   
   It follows from the specified upper-bound of $a$  that  (keep in mind that $p \!\in (2,3]$)
  \setlength\arraycolsep{3pt}
   \begin{eqnarray*} 
   \vert  a  \vert^p  & \le & (1+2 L h )^p     \Big(   \frac{1+ L h }{1 + 2 L h } \vert x \vert   + \frac{L h }{1 + 2 L h } \Big)^p   \\
   & \le &   (1+2 L h )^p     \Big(   \frac{1+ L h }{1 + 2 L h } \vert x \vert^{p}   + \frac{L h }{1 + 2 L h } \Big) \\
   & \le & (1+2 L h )^p    \vert x \vert^{p}   +  (1+2 L h )^{p-1}  L h .
   \end{eqnarray*} 
 Then, combining this with  the specified upper-bound of $A$, we derive
  \begin{eqnarray*} 
   \mathbb E  \vert a + \sqrt{h} A \zeta \vert^p  & \le &    \Big( 1 +  \frac{(p-1)(p-2)}{2 } h \Big)    (1+ 2 L h )^p \vert x \vert^p  \\
   & & +   \Big( 1 +  \frac{(p-1)(p-2)}{2 } h \Big)    (1+ 2 L h )^{p-1} L  h  \\
   & &  +  h 2^{p-1}\Lambda^p  \Big( 1 + p + h^{\frac{p}{2}-1} \Big)  (1+\vert x \vert^p) \, \mathbb E \vert  \zeta \vert^p.
   \end{eqnarray*} 
Using the inequality $ 1+ u \le e^{u}$, for every $u\!\in \mathbb R$, we finally get 
 \begin{equation*} 
   \mathbb E  \vert a + \sqrt{h} A \zeta \vert^p   \le     \Big( e^{ \kappa_p h}  + K_p h  \Big)  \vert x \vert^p  +  \big(  e^{\kappa_p  h } L + K_p \big) h, 
   \end{equation*} 
where $\kappa_p := \Big(\frac{(p-1)(p-2)}{2 } + 2 p L \Big)$  and $K_p := 2^{p-1}\Lambda^p \Big( 1 + p + h^{\frac{p}{2}-1} \Big)   \mathbb E \vert  \zeta \vert^p$.
\end{proof}

\section{Gradient and Hessian of the recursive jump diffusion distortion when $\nu $ is Gaussian}  \label{AppenB}
 Using some standard computations similar to~\cite{PagSag} the components of the gradient vector of the distortion function read (in the short time framework)
for every  $j=1,\ldots,N_{k+1}$
\begin{eqnarray*}
  \frac{\partial \tilde D_{k+1}(\Gamma_{k+1}) }{\partial  x_{k+1}^j} & = &  2 \sum_{i=1}^{N_k} \sum_{m=0}^{1} p_k^i \, p_m \Big[ \big( x_{k+1}^j - \mu_k(m,x_{k}^i)\big)  \Big(  \Phi_0( x_{k+1}^{j+}(m,x_{k}^i))  -  \Phi_0(x_{k+1}^{j-}( m,x_{k}^i)) \Big) \\
  &  & \qquad  + \,  v_k(m, x_{k}^i) \big(\Phi_0'(  x_{k+1}^{j+}(m,  x_{k}^i)) - \Phi_0'( x_{k+1}^{j-}(m,x_{k}^i)) \big) \Big].
   \end{eqnarray*}
 The  diagonal terms of the Hessian matrix $\nabla^2  \widetilde D_{k+1} (\Gamma_{k+1}) $ are given by:
\begin{eqnarray*}
  \frac{\partial^2 \tilde D_{k+1}(\Gamma_{k+1}) }{\partial^2  x_{k+1}^j} = 2 \sum_{i=1}^{N_k} \sum_{m=0}^{1} &  p_k^i\, p_m  \Big[ & \Phi_0( x_{k+1}^{j+}(m, x_{k}^i))  -  \Phi_0( x_{k+1}^{j-}(m, x_{k}^i)) 
  \\
&  & - \frac{1}{4 v_k(m, x_{k}^i)} \Phi_0'( x_{k+1}^{j+}(m, x_{k}^i))( x_{k+1}^{j+1} - x_{k+1}^{j}) \\
&  &-  \frac{1}{4 v_k( m,x_{k}^i)} \Phi_0'(x_{k+1}^{j-}(m, x_{k}^i))( x_{k+1}^{j} - x_{k+1}^{j-1}) \Big] 
\end{eqnarray*}
and its  sub-diagonal terms are  
 \begin{equation*}
 \frac{\partial^2 \tilde D_{k+1}(\Gamma_{k+1}) }{\partial  x_{k+1}^j \partial  x_{k+1}^{j-1} } = -\frac{1}{2} \sum_{i=1}^{N_k} \sum_{m=0}^{1}   p_k^i \, p_m  \frac{1}{v_k( m,x_{k}^i)} ( x_{k+1}^j - x_{k+1}^{j-1}) \Phi_0'( x_{k+1}^{j-}(m, x_{k}^i)).
 \end{equation*}
 The upper-diagonals terms are
  \begin{equation*}
\frac{\partial^2 \tilde D_{k+1}(\Gamma_{k+1}) }{\partial  x_{k+1}^j \partial x_{k+1}^{j+1} } = -\frac{1}{2} \sum_{i=1}^{N_k} \sum_{m=0}^{1} p_k^i\, p_m  \frac{1}{v_k( m,x_{k}^i)} ( x_{k+1}^{j+1} -  x_{k+1}^{j}) \Phi_0'( x_{k+1}^{j+}(m, x_{k}^i)).
\end{equation*}

\section{Gradient and Hessian of the recursive jump diffusion distortion for a general  $\nu $}  \label{AppenC}
In this case, the components of the gradient vector  of the distortion function read
for every  $j=1,\ldots,N_{k+1}$
\begin{eqnarray*}
  \frac{\partial \tilde D_{k+1}(\Gamma_{k+1}) }{\partial  x_{k+1}^j} & = & 2  \sum_{i=1}^{N_k} \sum_{m=0}^{1} p_k^i \, p_m \int_{\mathbb R} \nu(du) \Big[ \big( x_{k+1}^j - \mu_k(m,x_{k}^i,u)\big)  \Big(  \Phi_0( x_{k+1}^{j+}(m,x_{k}^i,u))   \\
  &  & \   - \,  \Phi_0\big(x_{k+1}^{j-}( m,x_{k}^i,u)\big) \Big)  +   v_k( x_{k}^i) \big(\Phi_0'(  x_{k+1}^{j+}(m,  x_{k}^i,u)) - \Phi_0'( x_{k+1}^{j-}(m,x_{k}^i,u)) \big) \Big].
   \end{eqnarray*}
 The  diagonal terms of the Hessian matrix $\nabla^2  \tilde D_{k+1} (\Gamma_{k+1}) $ are given by:
\begin{eqnarray*}
  \frac{\partial^2 \tilde D_{k+1}(\Gamma_{k+1}) }{\partial^2  x_{k+1}^j} &=&  2 \sum_{i=1}^{N_k} \sum_{m=0}^{1}   p_k^i\, p_m \int_{\mathbb R} \nu(du) \Big[  \Phi_0( x_{k+1}^{j+}(m, x_{k}^i,u))  -  \Phi_0( x_{k+1}^{j-}(m, x_{k}^i,u)) 
  \\
&  &  \hspace{4.0cm}\,  - \, \frac{1}{4 v_k( x_{k}^i)} \Phi_0'( x_{k+1}^{j+}(m, x_{k}^i,u))( x_{k+1}^{j+1} - x_{k+1}^{j}) \\
&  & \hspace{4.0cm} \,- \, \frac{1}{4 v_k( x_{k}^i)} \Phi_0'(x_{k+1}^{j-}(m, x_{k}^i,u))( x_{k+1}^{j} - x_{k+1}^{j-1}) \Big] 
\end{eqnarray*}
and its  sub-diagonal terms are  
 \begin{equation*}
 \frac{\partial^2 \tilde D_{k+1}(\Gamma_{k+1}) }{\partial  x_{k+1}^j \partial  x_{k+1}^{j-1} } =  -\frac{1}{2} \sum_{i=1}^{N_k} \sum_{m=0}^{1}   p_k^i \, p_m \frac{1}{v_k( x_{k}^i)} ( x_{k+1}^j - x_{k+1}^{j-1})  \int_{\mathbb R}    \Phi_0'( x_{k+1}^{j-}(m, x_{k}^i,u)) \nu(du) .
 \end{equation*}
 The upper-diagonals terms are
  \begin{equation*}
\frac{\partial^2 \tilde D_{k+1}(\Gamma_{k+1}) }{\partial  x_{k+1}^j \partial x_{k+1}^{j+1} } = -\frac{1}{2} \sum_{i=1}^{N_k} \sum_{m=0}^{1} p_k^i\, p_m  \frac{1}{v_k( x_{k}^i)} ( x_{k+1}^{j+1} -  x_{k+1}^{j})  \int_{\mathbb R} \Phi_0'( x_{k+1}^{j+}(m, x_{k}^i,u)) \nu(du).
\end{equation*}
The involved functions are defined as follows: $\mu_k(m,x,u) =  x + h\,  b(t_k,x) + (m u - \lambda h \, \mathbb E(U_1) ) \gamma(x) $ and $v_k(x) = \sqrt{h} \, \sigma(t_k,x)$. Like previously, we set 
$$ 
x_{k+1}^{j-}(m,x,u): = \frac{x_{k+1}^{j-1/2} - \mu_{k}(m,x,u) }{v_k(x)} \  \textrm{ and } \  x_{k+1}^{j+}(m,x,u): = \frac{x_{k+1}^{j+1/2} - \mu_{k}(m,x,u) }{v_k(x)},\; k=0,\ldots,n-1.
$$

\end{appendix}

\end{document}